\newtheorem{theorem}{Theorem}[section]
\newtheorem{lemma}[theorem]{Lemma}
\newtheorem{proposition}[theorem]{Proposition}
\newtheorem{definition}[theorem]{Definition}
\newtheorem{remark}[theorem]{Remark}
\newcommand{\filledbox}{\leavevmode
  \hbox to.77778em{%
  \hfil\vbox to.675em{\hrule width.6em height.6em}\hfil}}
\newcommand{\EE}{{\mathcal E}}
\newcommand{\Rm}{{\mathbb R}}
\newcommand{\Hm}{{\mathbb H}}
\newcommand{\Db}{{\mathbf D}}
\newcommand{\eps}{\varepsilon}
\begin{document}
\tabulinesep=1.0mm

\title{Energy-critical semi-linear shifted wave equation on the hyperbolic spaces\footnote{MSC classes: 35L71, 35L05}}

\author{Ruipeng Shen\\
Center for Applied Mathematics\\
Tianjin University\\
Tianjin, P.R.China}

\maketitle

\begin{abstract}
  In this paper we consider a semi-linear, energy-critical, shifted wave equation on the hyperbolic space ${\mathbb H}^n$ with $3 \leq n \leq 5$:
  \[
  \partial_t^2 u - (\Delta_{{\mathbb H}^n} + \rho^2) u = \zeta |u|^{4/(n-2)} u, \quad (x,t)\in {\mathbb H}^n \times {\mathbb R}.
  \]
 Here $\zeta = \pm 1$ and $\rho = (n-1)/2$ are constants. We introduce a family of Strichartz estimates compatible with initial data in the energy space $H^{0,1} \times L^2 ({\mathbb H}^n)$ and then establish a local theory with these initial data. In addition, we prove a Morawetz-type inequality
  \[
   \int_{-T_-}^{T_+} \int_{{\mathbb H}^n} \frac{\rho (\cosh |x|) |u(x,t)|^{2n/(n-2)}}{\sinh |x|} d\mu(x) dt \leq n {\mathcal E},
  \]
 in the defocusing case $\zeta = -1$, where ${\mathcal E}$ is the energy. Moreover, if the initial data are also radial, we can prove the scattering of the corresponding solutions by combining the Morawetz-type inequality, the local theory and a pointwise estimate on radial $H^{0,1}({\mathbb H}^n)$ functions.
\end{abstract}

\section{Introduction}

In this work we continue our discussion on a semi-linear shifted wave equation on $\Hm^n$:
\begin{equation}\label{(CP1)}
\left\{\begin{array}{l} \partial_t^2 u - (\Delta_{\Hm^n} + \rho^2) u = \zeta |u|^{p-1} u,\quad (x,t)\in \Hm^n \times \Rm;\\
u|_{t=0} = u_0; \\
\partial_t u|_{t=0} = u_1.\end{array}\right.
\end{equation}
Here the constants satisfy $\rho=(n-1)/2$, $\zeta = \pm 1$ and $p>1$. We call this equation defocusing if $\zeta = -1$, otherwise if $\zeta=1$ we call it focusing.  The energy-subcritical case ($p < p_c := 1 + 4/(n-2)$, $2\leq n\leq 6$) has been considered by the author's recent joint work with Staffilani \cite{subhyper}. As a continuation, this work is concerned with the energy critical case $p = p_c$, $3\leq n\leq 5$. 

\paragraph{An Analogue of the wave equation in $\Rm^n$}  The equation \eqref{(CP1)} discussed in this work is the $\Hm^n$ analogue of the semi-linear wave equation defined in Euclidean space $\Rm^n$:
\begin{equation*} 
 \partial_t^2 u - \Delta u = \zeta |u|^{p-1}u,\quad (x,t)\in \Rm^n \times \Rm.
\end{equation*}
This similarity can be understood in two different ways, as we have already mentioned in \cite{subhyper}.
\begin{itemize}
  \item [(I)] The operator $-\Delta_{\Hm^n}-\rho^2$ in the hyperbolic space and the Laplace operator $-\Delta$ in $\Rm^n$ share the same Fourier symbol $\lambda^2$, as mentioned in Definition \ref{Sobo} below.
  \item [(II)] There is a transformation between solutions of the linear wave equation defined in a forward light cone in $\Rm^n \times \Rm$ and solutions of the linear shifted wave equation defined in the whole space-time $\Hm^n \times \Rm$. Please see Tataru \cite{tataru} for more details.
\end{itemize}
The author would also like to mention one major difference between these two equations. The symmetric group of the solutions to $\Rm^n$ wave equation includes the natural dilations $(\mathbf{T}_\lambda u)(x,t) \doteq \lambda^{-2/(p-1)} u(x/\lambda, t/\lambda)$, where $\lambda$ is an arbitrary positive constant. The shifted wave equation \eqref{(CP1)} on the hyperbolic spaces, however, does not possess a similar property of dilation-invariance.

\paragraph{The energy} Suitable  solutions to \eqref{(CP1)} satisfy the energy conservation law:
\begin{equation*} 
 \EE(u, u_t) = \int_{\Hm^n} \left[\frac{1}{2}(|\nabla u|^2 - \rho^2 |u|^2) + \frac{1}{2}|\partial_t u|^2 - \frac{\zeta}{p+1}|u|^{p+1} \right] d\mu = \hbox{const},
\end{equation*}
where $d \mu$ is the volume element on $\Hm^n$. Since the spectrum of $-\Delta_{\Hm^n}$ is $[\rho^2, \infty)$, it follows that the integral of $|\nabla u|^2 - \rho^2 |u|^2$ above is always nonnegative. We can also rewrite the energy in terms of certain norms. (Please see Definition \ref{Sobo} for the definition of $\dot{H}^{0,1}$ norm)
\[
 \EE = \frac{1}{2}\|u\|_{H^{0,1}(\Hm^n)}^2 + \frac{1}{2}\|\partial_t u\|_{L^2 (\Hm^n)}^2 - \frac{\zeta}{p+1} \|u\|_{L^{p+1}(\Hm^n)}^{p+1}.
\]
Please note that a solution in the focusing case may come with a negative energy. 

\paragraph{Previous results on $\Rm^n$} Let us first recall a few results regarding the energy-critical wave equation on Euclidean spaces. In 1990's Grillakis \cite{mg1, mg2} and Shatah-Struwe \cite{ss1,ss2} proved in the defocusing case that the solutions with any $\dot{H}^1 \times L^2$ initial data exist globally in time and scatter. The focusing case is more subtle and has been the subject of many more recent works such as Kenig-Merle \cite{kenig} (dimension $3\leq n \leq 5$, energy below the ground state), Duyckaerts-Kenig-Merle \cite{secret, tkm1} (radial case in dimension 3), Krieger-Nakanishi-Schlag \cite{radial dynamics, nonradial dynamics} (energy slightly above the ground state). 

\paragraph{Previous results on $\Hm^n$} Much less has been known in the case of hyperbolic spaces. Strichartz-type estimates for shifted wave equations have been discussed by Tataru \cite{tataru} and Ionesco \cite{SThyper}. More recently Anker, Pierfelice and Vallarino gave a wider range of Strichartz estimates and a brief description on the local well-posedness theory for the energy-subcritical case ($p<p_c$) in their work \cite{wavehyper}. 
The author's joint work with Staffilani \cite{subhyper} improved their local theory and proved the global existence and scattering of solutions in the defocusing case with any $H^{1/2,1/2} \times H^{1/2,-1/2} (\Hm^n)$ initial data using a Morawetz type inequality, if $2 \leq n \leq 6$. Finally some global existence and scattering results have also been proved by A. French \cite{superhyp} in the energy-supercritical case, but only for small initial data.

\paragraph{Goal and main idea of this paper} This paper is divided into two parts. The first part is concerned with the local theory of the energy-critical shifted wave equation in hyperbolic spaces $\Hm^n$ with $3 \leq n \leq 5$:
\begin{equation*}
\left\{\begin{array}{l} \partial_t^2 u - (\Delta_{\Hm^n} + \rho^2) u = \zeta |u|^{4/(n-2)} u,\quad (x,t)\in \Hm^n \times \Rm;\\
u|_{t=0} = u_0; \\
\partial_t u|_{t=0} = u_1.\end{array}\right.\qquad \qquad \qquad (CP1)
\end{equation*}
We will first introduce a family of new Strichartz estimates via a $T T^\star$ argument and then establish a local well-posedness theory for any initial data in the energy space $H^{0,1} \times L^2 (\Hm^n)$.
The second part is about the global behaviour of solutions in the defocusing case. We will prove a Morawetz-type inequality
\begin{equation} \label{Morawetz}
        \int_{-T_-}^{T_+} \int_{\Hm^n} \frac{\rho (\cosh |x|) |u(x,t)|^{2n/(n-2)}}{\sinh |x|} d\mu(x) dt \leq n \EE.
\end{equation}
As in the Euclidean spaces, global space-time integral estimates of this kind are a powerful tool to discuss global behaviour of solutions. Although we are still not able to show the scattering of solutions with arbitrary initial data in $H^{0,1} \times L^2(\Hm^n)$, which we expect to be true, the Strichartz estimate above is sufficient to prove the scattering in the radial case, thanks to a point-wise estimate on radial $H^{0,1}(\Hm^n)$ functions as given in Lemma \ref{radialest}.

\paragraph{Main Results} For the convenience of readers, we briefly describe our main results as follows. We always assume that $3 \leq n \leq 5$ in this paper.
\begin{itemize}
  \item[(I)] For any initial data $(u_0,u_1) \in H^{0,1} \times L^2 (\Hm^n)$, there exists a unique solution to the equation (CP1) in a maximal time interval $(-T_-,T_+)$. 
  \item[(II)] In addition, if $u$ is a solution to (CP1) in the defocusing case with initial data $(u_0,u_1) \in H^{0,1}\times L^2 (\Hm^n)$, then it satisfies the Morawetz-type estimate \eqref{Morawetz}.
  \item[(III)] Moreover, if the initial data $(u_0,u_1) \in H^{0,1}\times L^2 (\Hm^n)$ are radial, then the solution to the equation (CP1) in the defocusing case exists globally in time and scatters. It is equivalent to saying that the maximal lifespan of the solution $u$ is $\Rm$ and there exist two pairs $(u_0^{\pm}, u_1^{\pm}) \in H^{0,1} \times L^2 (\Hm^n)$, such that
      \[
       \lim_{t \rightarrow \pm \infty} \left\|\left(u(\cdot, t), \partial_t u (\cdot,t)\right) - \mathbf{S}_L (t)(u_0^\pm, u_1^\pm)\right\|_{H^{0, 1}\times L^2(\Hm^n)} = 0.
      \]
      Here $\mathbf{S}_L (t)$ is the linear propagation operator for the shifted wave equation on $\Hm^n$ as defined in Section \ref{sec: notation}. 
\end{itemize}

\section{Notations and Preliminary Results}

\subsection{Notations} \label{sec: notation}

\paragraph{The notation $\lesssim$} We use the notation $A \lesssim B$ if there exists a constant $c$ such that $A \leq cB$. 

\paragraph{Linear propagation operator} Given a pair of initial data $(u_0, u_1)$, we use the notation $\mathbf{S}_{L,0}(t) (u_0,u_1)$ to represent the solution $u$ of the free linear shifted wave equation $\partial_t^2 u - (\Delta_{\Hm^n}+\rho^2)u = 0$ with initial data $(u, \partial_t u)|_{t=0} = (u_0,u_1)$. If we are also interested in the velocity $\partial_t u$, we can use the notations 
\begin{align*}
 &\mathbf{S}_L (t) (u_0,u_1) \doteq (u(\cdot, t), \partial_t u (\cdot, t)),& &\mathbf{S}_L (t) \begin{pmatrix} u_0\\ u_1 \end{pmatrix} \doteq \begin{pmatrix} u(\cdot, t)\\ \partial_t u(\cdot, t) \end{pmatrix}.& 
\end{align*}

\subsection{Fourier Analysis}

In order to make this paper self-contained, we make a brief review on the basic knowledge of the hyperbolic spaces and the related Fourier analysis in this subsection. 

\paragraph{Model of hyperbolic space} We use the hyperboloid model for hyperbolic space $\Hm^n$ in this paper. We start by considering Minkowswi space $\Rm^{n+1}$ equipped with the standard Minkowswi metric $-(dx^0)^2 + (dx^1)^2 + \cdots + (dx^n)^2$ and the bilinear form $[x,y] = x_0 y_0 - x_1 y_1 - \cdots - x_n y_n$. The hyperbolic space $\Hm^n$ can be defined as the upper sheet of the hyperboloid $x_0^2 - x_1^2 -\cdots -x_n^2 =1$. The Minkowswi metric then induces the metric, covariant derivatives $\mathbf D$ and measure $d\mu$ on the hyperbolic space $\Hm^n$.

\paragraph{Fourier transform} (Please see \cite{fourier1, fourier2} for more details) The Fourier transform takes suitable functions defined on $\Hm^n$ to functions defined on $(\lambda, \omega) \in \Rm \times {\mathbb S}^{n-1}$. We can write down the Fourier transform of a function $f\in C_0^{\infty} (\Hm^n)$ and the inverse Fourier transform by
\begin{align*}
 \tilde{f} (\lambda,\omega) &= \int_{\Hm^n} f(x) [x,b(\omega)]^{i\lambda -\rho} d\mu(x);\\
 f(x) & = \hbox{const.} \int_0^\infty \int_{{\mathbb S}^{n-1}} \tilde{f}(\lambda,\omega) [x,b(\omega)]^{-i\lambda -\rho} |\mathbf{c}(\lambda)|^{-2} d\omega d\lambda;
\end{align*}
where $b(\omega)$ and the Harish-Chandra $\mathbf{c}$-function $\mathbf{c}(\lambda)$ are defined by ($C_n$ is a constant determined solely by the dimension $n$)
\begin{align*}
 &b(\omega) = (1, \omega) \in \Rm^{n+1};& &\mathbf{c}(\lambda) = 
 C_n \frac{\Gamma (i \lambda)}{\Gamma (i \lambda + \rho)}.&
\end{align*}
It is well known that $|\mathbf{c}(\lambda)|^{-2} \lesssim |\lambda|^2 (1+|\lambda|)^{n-3}$. The Fourier transform $f \rightarrow \tilde{f}$ defined above extends to an isometry from $L^2(\Hm^n)$ onto $L^2 (\Rm^+\times {\mathbb S}^{n-1}, |\mathbf{c}(\lambda)|^{-2} d\lambda d\omega)$ with the Plancheral identity:
\[
 \int_{\Hm^n} f_1 (x) \overline{f_2 (x)} dx = \int_0^\infty \int_{{\mathbb S}^{n-1}} \tilde{f}_1(\lambda,\omega) \overline{\tilde{f}_2 (\lambda, \omega)} |\mathbf{c}(\lambda)|^{-2} d\omega d\lambda.
\]
We also have an identity $\widetilde{-\Delta_{\Hm^n} f} = (\lambda^2 + \rho^2) \tilde{f}$ for the Laplace operator $\Delta_{\Hm^n}$.

\paragraph{Radial Functions} Let us use the polar coordinates $(r, \Theta) \in [0,\infty) \times {\mathbb S}^{n-1}$ to represent the point $(\cosh r, \Theta \sinh r) \in \Hm^n \hookrightarrow \Rm^{n+1}$ in the hyperboloid model above. In particular, the $r$ coordinate of a point in $\Hm^n$ represents the metric distance from that point to the ``origin'' $\mathbf{0} \in \Hm^n$, which corresponds to the point $(1,0,\cdots,0)$ in the Minkiwski space. As in Euclidean spaces, for any $x\in \Hm^n$ we also use the notation $|x|$ for the same distance from $x$ to $\mathbf{0}$. Namely
\[
 r = |x| = d (x, \mathbf{0}), \qquad x \in \Hm^n.
\]
A function $f$ defined on $\Hm^n$ is radial if and only if it is independent of $\Theta$. By convention we may use the notation $f(r)$ for a radial function $f$. If the function $f(x)$ in question is radial, we can rewrite the Fourier transform and its inverse in a simpler form
\begin{align*}
 \tilde{f}(\lambda) &= \tilde{f}(\lambda, \omega) = \int_{\Hm^n} f(x) \Phi_{-\lambda} (x) d\mu(x);\\
 f(x) &= \hbox{const.} \int_0^\infty \tilde{f}(\lambda) \Phi_{\lambda} (x) |\mathbf{c}(\lambda)|^{-2} d\lambda.
\end{align*}
Here the function $\Phi_\lambda (x)$ is the elementary spherical (radial) function of $x \in \Hm^n$ defined by
\[
 \Phi_\lambda (x) = \int_{{\mathbb S}^{n-1}} [x, b(\omega)]^{-i \lambda-\rho} d\omega.
\]
One can use spherical coordinates on ${\mathbb S}^{n-1}$ to evaluate the integral and rewrite $\Phi_\lambda (x)$ into
\begin{equation} \label{formulaphi1}
 \Phi_\lambda (x) = \Phi_\lambda (r) = \frac{\Gamma(\frac{n}{2})}{\sqrt{\pi}\Gamma(\frac{n-1}{2})} \int_0^\pi (\cosh r - \sinh r \cos \theta)^{-i \lambda -\rho} \sin^{n-2} \theta \,d\theta.
\end{equation}
The change of variables $u = \ln (\cosh r - \sinh r \cos \theta)$ then gives another formula of $\Phi_\lambda (r)$ if $r > 0$:
\begin{equation*} 
\Phi_\lambda (r) = \frac{2^{\frac{n-3}{2}}\Gamma(\frac{n}{2})}{\sqrt{\pi}\Gamma(\frac{n-1}{2})} (\sinh r)^{2-n} \int_{-r}^r (\cosh r - \cosh u)^{\frac{n-3}{2}} e^{-i\lambda u}\, du.
\end{equation*}
These integral representations imply that
\begin{itemize}
  \item The function $\Phi_\lambda (r)$ is a real-valued function for all $r \geq 0$ and $\lambda \in \Rm$.
  \item The function $\Phi_\lambda (r)$ has an upper bound independent of $\lambda$
  \begin{equation} \label{universalphi}
   |\Phi_\lambda (r)| \leq \Phi_0 (r) \leq C e^{-\rho r}(r + 1).
  \end{equation}
\end{itemize}
In the 3-dimensional case, the function $\Phi_\lambda (r)$ is particularly easy and can be given by an explicit formula $\Phi_\lambda (r) = (\sin \lambda r)/(\lambda \sinh r)$.

\paragraph{Convolution} If $f, K \in C_0 (\Hm^n)$ and $K$ is radial, we can define the convolution $f \ast K$ by an integral 
\[
 (f \ast K)(x) = \int_{{\mathbb G}} f(g \cdot \mathbf{0}) K (g^{-1} \cdot x) dg.
\]
Here ${\mathbb G} = SO(1,n)$ is the connected Lie Group of $(n+1) \times (n+1)$ matrices that leave the bilinear form $[x, y]=x_0 y_0 - x_1 y_1 -\cdots -x_n y_n$ invariant. The notations $g\cdot \mathbf{0}$ and $g^{-1} \cdot x$ represent the natural action of $\mathbb G$ on $\Hm^n$ defined by the usual left-multiplication of matrices on vectors. The measure $dg$ is the Haar measure on $G$ normalized in such a way that the identity $\int_{{\mathbb G}} f(g\cdot \mathbf{0}) d g = \int_{\Hm^n} f(x) d\mu$ holds for any $f \in C_0 (\Hm^n)$. The Fourier transform of $f \ast K$ satisfies the identity
\[
 \left(\widetilde{f \ast K}\right) (\lambda, \omega) = \tilde{f} (\lambda, \omega) \cdot \tilde{K} (\lambda).
\]
The Fourier transform $\tilde{K}$ does not depend on $\omega$ since we have assumed that $K$ is radial. The author would like to emphasize that there is no simple identity of this type without radial assumption on $K$.  Please see \cite{hypersdg} for more details.

\subsection{Sobolev Spaces}

\begin{definition} \label{Sobo}
Let $D^\gamma = (-\Delta_{\Hm^n} -\rho^2)^{\gamma/2}$ and $\tilde{D}^\sigma = (-\Delta_{\Hm^n} +1)^{\sigma/2}$.  These operators can also be defined by Fourier multipliers $m_1(\lambda) =|\lambda|^\gamma$ and $m_2(\lambda) = (\lambda^2 + \rho^2 +1)^{\sigma/2}$, respectively. We define the following Sobolev spaces and norms for $\gamma < 3/2$.
\begin{align*}
 &H_q^{\sigma}(\Hm^n) = \tilde{D}^{-\sigma} L^q (\Hm^n),&
 &\|u\|_{H_q^{\sigma}(\Hm^n)}= \|\tilde{D}^\sigma u\|_{L^q (\Hm^n)};&\\
 &H^{\sigma,\gamma}(\Hm^n) = \tilde{D}^{-\sigma} D^{-\gamma} L^2 (\Hm^n),&
 &\|u\|_{H^{\sigma,\gamma}(\Hm^n)}= \|D^{\gamma}\tilde{D}^\sigma u\|_{L^2 (\Hm^n)}.&
\end{align*}
\end{definition}
\begin{remark}
If $\sigma$ is a positive integer, one can also define the Sobolev spaces by the Riemannian structure. For example, we can first define the $W^{1,q}$ norm as
\[
 \|u\|_{W^{1,p}} = \left(\int_{\Hm^n} |\nabla u|^{q} d\mu\right)^{1/q}
\]
for suitable functions $u$ and then take the closure. Here $|\nabla u| = (\Db_\alpha u \Db^\alpha u)^{1/2}$ is defined by the covariant derivatives. It turns out that these two definitions are equivalent to each other if $1 < q < \infty$, see \cite{tataru}. In other words, we have $\|u\|_{H_q^\sigma} \simeq \|u\|_{W^{\sigma,q}}$. In particular, we can rewrite the definition of $\dot{H}^{0,1}$ norm into 
\[
 \|u\|_{\dot{H}^{0,1}(\Rm^n)}^2 = \int_{\Hm^n} \left(|\nabla u|^2 - \rho^2 |u|^2\right)\, d\mu. 
\]
\end{remark}

\begin{definition}
Let $I$ be a time interval. The space-time norm is defined by
\[
 \|u(x,t)\|_{L^q L^r (I \times \Hm^n)} = \left( \int_{I} \left(\int_{\Hm^n} |u(x,t)|^r d\mu\right)^{q/r} dt \right)^{1/q}.
\]
\end{definition}
\begin{proposition} [Sobolev embedding]  Assume $1 < q_1 \leq q_2 < \infty$ and $\sigma_1, \sigma_2 \in \Rm$. If $\sigma_1 -\frac{n}{q_1} \geq \sigma_2 - \frac{n}{q_2}$, then we have the Sobolev embedding $H_{q_1}^{\sigma_1}(\Hm^n) \hookrightarrow H_{q_2}^{\sigma_2}(\Hm^n)$.
\end{proposition}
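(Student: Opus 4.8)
The plan is to reduce the embedding to a single mapping property and then analyze the relevant convolution kernel. Since by definition $\|u\|_{H_{q_2}^{\sigma_2}} = \|\tilde{D}^{\sigma_2} u\|_{L^{q_2}}$ and $\|u\|_{H_{q_1}^{\sigma_1}} = \|\tilde{D}^{\sigma_1} u\|_{L^{q_1}}$, I would set $f = \tilde{D}^{\sigma_1} u$, so that the claim becomes the statement that the operator $\tilde{D}^{-\beta} = (-\Delta_{\Hm^n}+1)^{-\beta/2}$ maps $L^{q_1}(\Hm^n)$ boundedly into $L^{q_2}(\Hm^n)$, where $\beta = \sigma_1 - \sigma_2$. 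The hypotheses $\sigma_1 - n/q_1 \geq \sigma_2 - n/q_2$ and $q_1 \leq q_2$ force $\beta \geq n(1/q_1 - 1/q_2) \geq 0$, which is exactly the range one should expect.

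First I would realize $\tilde{D}^{-\beta}$ as convolution against a radial kernel $G_\beta$ on $\Hm^n$ by the subordination formula
\[
 (-\Delta_{\Hm^n}+1)^{-\beta/2} = \frac{1}{\Gamma(\beta/2)} \int_0^\infty t^{\beta/2-1} e^{-t}\, e^{t \Delta_{\Hm^n}}\, dt,
\]
so that $G_\beta$ is the corresponding time-average of the heat kernel of $\Hm^n$. Because the spectrum of $-\Delta_{\Hm^n}+1$ is bounded below by $\rho^2 + 1 > 0$, the factor $e^{-t}$ forces $G_\beta$ to decay exponentially, and the standard two-sided heat-kernel bounds on hyperbolic space yield the two regimes I need: near $r=0$ the kernel inherits the Euclidean singularity $G_\beta(r) \lesssim r^{\beta-n}$ (with the usual logarithmic or bounded behaviour when $\beta = n$ or $\beta > n$), while for large $r$ one has $G_\beta(r) \lesssim e^{-\rho r}(1+r)^N$ for some $N$. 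Consequently the local part of $G_\beta$ lies in $L^s$ for every $s$ with $s(n-\beta) < n$, and the global part lies in $L^s(\Hm^n)$ for all $s \in [1,\infty]$.

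Next I would split $G_\beta = G_\beta \mathbf{1}_{\{r \le 1\}} + G_\beta \mathbf{1}_{\{r > 1\}}$ and treat the pieces separately. For the global piece the exponential decay makes the kernel a member of $L^s(\Hm^n)$ for every $s$; applying the Kunze--Stein convolution inequality (or, more crudely, Young's inequality for radial convolution on $\Hm^n$) with $1 + 1/q_2 = 1/q_1 + 1/s$ gives a bounded map $L^{q_1}\to L^{q_2}$ for the entire admissible range. For the local piece, $\Hm^n$ is comparable to $\Rm^n$ at unit scale, so the singularity $r^{\beta-n}$ reduces the boundedness to the classical Hardy--Littlewood--Sobolev fractional-integration estimate, which requires precisely $\beta \geq n(1/q_1 - 1/q_2)$; this is where the hypothesis enters, and the endpoint $\beta = n(1/q_1-1/q_2)$ with $q_1 < q_2$ is recovered through the weak-type HLS inequality and Marcinkiewicz interpolation. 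When $q_1 = q_2$ the local piece is simply convolution with an $L^1$ kernel (integrable since $\beta > 0$), and the claim is immediate.

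The main obstacle I anticipate is not the local Euclidean singularity, which is entirely standard, but rather making the convolution/Young step rigorous in the non-commutative group setting and, in tandem, exploiting the exponential decay to license the \emph{inequality} $\sigma_1 - n/q_1 \geq \sigma_2 - n/q_2$ rather than only the scale-invariant equality familiar from $\Rm^n$. The absence of dilation symmetry on $\Hm^n$ is exactly what makes this broader range possible: the $+1$ shift together with the spectral gap uniformly damps the low frequencies, so no borderline behaviour survives at large scales. I would therefore present the heat-kernel decay estimate and the Kunze--Stein inequality as the two ingredients that upgrade the classical Euclidean embedding to the stated hyperbolic version, citing the standard heat-kernel bounds for the kernel estimates.
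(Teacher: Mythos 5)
The paper itself offers no proof of this proposition; it simply cites \cite{wavehyper, sob}. So your proposal has to be judged on its own correctness, and its overall architecture is sound: reducing the embedding to $L^{q_1}\to L^{q_2}$ boundedness of $\tilde{D}^{-\beta}$ with $\beta=\sigma_1-\sigma_2\geq n(1/q_1-1/q_2)\geq 0$, realizing $\tilde{D}^{-\beta}$ as convolution with a radial Bessel-type kernel via subordination, and splitting into a local piece (Euclidean singularity $r^{\beta-n}$, handled by Hardy--Littlewood--Sobolev with the endpoint via weak-type bounds) and a global piece (exponential decay) is essentially how such embeddings are proved in the references.

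There is, however, one genuine gap: the large-$r$ bound you state, $G_\beta(r)\lesssim e^{-\rho r}(1+r)^N$, is too weak to support the conclusion you draw from it. The volume element on $\Hm^n$ is $(\sinh r)^{n-1}dr\approx e^{2\rho r}dr$ (recall $n-1=2\rho$), so
\[
\int_1^\infty \left(e^{-\rho r}(1+r)^N\right)^s(\sinh r)^{n-1}\,dr \approx \int_1^\infty e^{(2-s)\rho r}(1+r)^{Ns}\,dr,
\]
which diverges for every $s\leq 2$. Thus under your stated bound the global piece lies in $L^s(\Hm^n)$ only for $s>2$, not for all $s\in[1,\infty]$, and the Young step with $1+1/q_2=1/q_1+1/s$ fails exactly when $s\leq 2$; for instance, the case $q_1=q_2$ needs $s=1$. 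The paper's Kunze--Stein inequality (Lemma \ref{ksphenomenon}) cannot serve as the crude fallback either: it requires $q_1\leq 2\leq q_2$, and with only $e^{-\rho r}$ decay its integral also diverges at $q_1=q_2=2$ (the integrand becomes $(1+r)^{N+1}$). The repair lies inside your own argument: carry the spectral shift quantitatively through the subordination integral. The heat kernel bound $h_t(r)\lesssim t^{-n/2}e^{-\rho^2 t-\rho r-r^2/(4t)}(1+r+t)^{(n-3)/2}(1+r)$ combined with the factor $e^{-t}$ gives, after optimizing $(\rho^2+1)t+r^2/(4t)\geq \sqrt{\rho^2+1}\,r$,
\[
G_\beta(r)\lesssim e^{-\left(\rho+\sqrt{\rho^2+1}\right)r}(1+r)^{N},\qquad r\geq 1,
\]
and since $\rho+\sqrt{\rho^2+1}>2\rho$, the global piece then genuinely belongs to $L^s(\Hm^n)$ for every $s\in[1,\infty]$, after which your Young/HLS scheme goes through. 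In short, you correctly identified the mechanism (the $+1$ shift and the spectral gap damping low frequencies), but the bound you wrote down discards precisely the gain that mechanism provides; with $e^{-\rho r}$ alone the global step fails.
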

\noindent For the proof see \cite{wavehyper, sob} and the references cited therein.
\begin{proposition} \label{Sobolevem1}
(See Proposition 2.5 in \cite{subhyper}) If $q >2$, $0 < \tau < \frac{3}{2}$ and $\sigma + \tau \geq \frac{n}{2} - \frac{n}{q}$, then we have the Sobolev embedding $H^{\sigma, \tau}(\Hm^n) \hookrightarrow L^{q}(\Hm^n)$.
\end{proposition}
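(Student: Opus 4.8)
The plan is to recast the embedding as the $L^2 \to L^q$ boundedness of a single convolution operator and then to separate the low- and high-frequency régimes, which are governed by completely different mechanisms. Since $H^{\sigma,\tau}(\Hm^n)$ is by definition the image $\tilde{D}^{-\sigma}D^{-\tau}L^2(\Hm^n)$ with $\|u\|_{H^{\sigma,\tau}}=\|D^\tau\tilde{D}^\sigma u\|_{L^2}$, every $u$ in this space can be written as $u = Tf$ with $f = D^\tau\tilde{D}^\sigma u \in L^2$ and $\|f\|_{L^2}=\|u\|_{H^{\sigma,\tau}}$, where $T=\tilde{D}^{-\sigma}D^{-\tau}$ is the radial Fourier multiplier with symbol $m(\lambda)=(\lambda^2+\rho^2+1)^{-\sigma/2}|\lambda|^{-\tau}$. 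Thus it suffices to prove $\|Tf\|_{L^q}\lesssim\|f\|_{L^2}$. I would fix a smooth cutoff and split $m = m_\infty + m_0$, with $m_\infty$ supported in $\{|\lambda|\ge 1\}$ and $m_0$ in $\{|\lambda|\le 2\}$, writing $T = T_\infty + T_0$ accordingly; each piece is a radial convolution operator, and I bound the two separately.

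For the high-frequency piece I would factor through the ordinary $L^2$-Sobolev scale. On $\{|\lambda|\ge1\}$ one has $m_\infty(\lambda) = b(\lambda)\,(\lambda^2+\rho^2+1)^{-(\sigma+\tau)/2}$ with $b(\lambda)=\chi_\infty(\lambda)(\lambda^2+\rho^2+1)^{\tau/2}|\lambda|^{-\tau}$ a bounded symbol, so that $T_\infty = \tilde{D}^{-(\sigma+\tau)}B$ where $B$ is the multiplier $b$, bounded on $L^2$ by Plancherel. Hence $T_\infty f \in H_2^{\sigma+\tau}(\Hm^n)$ with $\|T_\infty f\|_{H_2^{\sigma+\tau}}=\|Bf\|_{L^2}\lesssim\|f\|_{L^2}$, and the Sobolev embedding proposition for the spaces $H_q^\sigma$ stated above (applied with $q_1=2$, $\sigma_1=\sigma+\tau$, $q_2=q$, $\sigma_2=0$, whose hypothesis reads exactly $\sigma+\tau\ge n/2-n/q$) gives $\|T_\infty f\|_{L^q}\lesssim\|f\|_{L^2}$. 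This is the only place the condition $\sigma+\tau\ge n/2-n/q$ is used, and the route through genuine Sobolev embedding is forced here because the local singularity $G_\infty(r)\sim r^{-(n-\sigma-\tau)}$ of the high-frequency kernel keeps it out of $L^2(\Hm^n)$ in the relevant range $\sigma+\tau\le n/2$.

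The low-frequency piece is where the hyperbolic geometry, and the restriction $\tau<3/2$, enter. On $\{|\lambda|\le2\}$ the factor $(\lambda^2+\rho^2+1)^{-\sigma/2}$ is smooth and bounded, so the symbol is essentially $|\lambda|^{-\tau}$; the low-frequency cutoff removes any local singularity, so the radial kernel $G_0$ of $T_0$ is smooth and bounded near the origin. The key computation is its behaviour as $r\to\infty$: inserting the Harish-Chandra asymptotics $\Phi_\lambda(r)\approx \mathbf{c}(\lambda)e^{(i\lambda-\rho)r}+\overline{\mathbf{c}(\lambda)}e^{(-i\lambda-\rho)r}$ together with $|\mathbf{c}(\lambda)|^{-2}\sim\lambda^2$ and $1/\mathbf{c}(\lambda)\sim \lambda$ as $\lambda\to0$ into the inverse spherical transform produces an oscillatory integral of the form $\int \chi(\lambda)\lambda^{1-\tau}e^{\pm i\lambda r}\,d\lambda$, whose decay is dictated by the singularity of $\lambda^{1-\tau}$ at the origin. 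This yields the pointwise bound
\[
 |G_0(r)| \lesssim e^{-\rho r}(1+r)^{-(2-\tau)}.
\]
Against the volume growth $(\sinh r)^{n-1}\sim e^{2\rho r}$ this gives $\int_0^\infty |G_0(r)|^2(\sinh r)^{n-1}\,dr \sim \int^\infty (1+r)^{-2(2-\tau)}\,dr$, which converges precisely when $\tau<3/2$; that is, $G_0\in L^2(\Hm^n)$ exactly in the stated range.

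Finally I would close the low-frequency estimate with the Kunze-Stein phenomenon, which replaces the (failing) Young inequality on the exponentially growing space $\Hm^n$. By duality $\|T_0\|_{L^2\to L^q}=\|T_0^\ast\|_{L^{q'}\to L^2}$, and $T_0^\ast$ is convolution by the radial kernel $\overline{G_0}$. Since $q>2$ gives $q'<2$, the Kunze-Stein inequality $\|h\ast\overline{G_0}\|_{L^2}\lesssim\|h\|_{L^{q'}}\|G_0\|_{L^2}$ for $q'<2$ yields $\|T_0 f\|_{L^q}\lesssim\|G_0\|_{L^2}\|f\|_{L^2}\lesssim\|f\|_{L^2}$, and combining the two régimes proves the embedding. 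I expect the main obstacle to be the low-frequency kernel estimate: obtaining the sharp decay exponent $2-\tau$ requires a careful Fourier/stationary-phase analysis of the oscillatory integral using the precise small-$\lambda$ behaviour of the Harish-Chandra $\mathbf{c}$-function, and it is this exponent that pins the threshold at $\tau=3/2$ rather than the naive $\tau<n/2$ one might guess from the ambient dimension.
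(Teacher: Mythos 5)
Your proposal cannot be checked against a proof in this paper, because the paper does not prove Proposition \ref{Sobolevem1} at all: it simply cites Proposition 2.5 of \cite{subhyper}. Judged on its own merits, your argument is correct in outline and is essentially the standard route for such embeddings on $\Hm^n$: the reduction to an $L^2 \rightarrow L^q$ bound for the multiplier $(\lambda^2+\rho^2+1)^{-\sigma/2}|\lambda|^{-\tau}$; the high/low frequency splitting; the high-frequency factorization through $H_2^{\sigma+\tau}$ followed by the embedding $H_{q_1}^{\sigma_1}\hookrightarrow H_{q_2}^{\sigma_2}$ (this step is complete, and is exactly where $\sigma+\tau \geq \frac{n}{2}-\frac{n}{q}$ and $q<\infty$ enter); and the identification of $\tau<3/2$ with the condition $G_0\in L^2(\Hm^n)$, coming from the exact cancellation of $e^{-2\rho r}$ against the volume growth $(\sinh r)^{n-1}\sim e^{2\rho r}$. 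Your closing remark that the threshold $3/2$ is dimension-independent because the Plancherel density is $\sim\lambda^2$ near $\lambda=0$ in every dimension is also the right way to understand the statement.

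Two caveats. First, the kernel bound $|G_0(r)|\lesssim e^{-\rho r}(1+r)^{-(2-\tau)}$ is indeed the technical heart and you only sketch it; the standard execution splits the frequency integral at $\lambda\sim 1/r$: for $\lambda r\leq 1$ the crude bound $|\Phi_\lambda|\leq\Phi_0\lesssim e^{-\rho r}(1+r)$ already gives $e^{-\rho r}(1+r)\int_0^{1/r}\lambda^{2-\tau}\,d\lambda\lesssim e^{-\rho r}r^{\tau-2}$, while for $\lambda r\geq 1$ one needs the Harish-Chandra expansion with uniform error control, in the spirit of the integration by parts carried out in Lemma \ref{Striinter1} of this paper. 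Second, and more importantly: the Kunze-Stein inequality you invoke, $\|h\ast\kappa\|_{L^2}\lesssim_{q'}\|h\|_{L^{q'}}\|\kappa\|_{L^2}$ for radial $\kappa$ and $q'<2$, is the classical Kunze-Stein/Cowling phenomenon on the group $SO(1,n)$, and it is \emph{not} Lemma \ref{ksphenomenon} of this paper. The distinction is not cosmetic. If you run your duality argument with Lemma \ref{ksphenomenon} instead (lemma exponents $q\mapsto 2$, $\tilde q\mapsto q$, hence $\gamma=\frac{4}{q+2}$, $Q=\frac{2q}{q+2}$), the exponentials again cancel, but the polynomial convergence condition $\gamma-Q(2-\tau)<-1$ becomes $\tau<\frac{3}{2}-\frac{3}{q}$, strictly short of the claimed range for every finite $q$. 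So you must genuinely import the classical theorem (or an equivalent restriction-type formulation, such as $\sup_\lambda\|\tilde h(\lambda,\cdot)\|_{L^2({\mathbb S}^{n-1})}\lesssim\|h\|_{L^{q'}}$ for $q'<2$) with a proper citation; with that in hand your proof closes and gives the full range $\tau<3/2$.
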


\subsection{Technical Lemma} \label{sec: estiphi}

In this subsection we introduce a point-wise estimate for radial $H^{0,1}(\Hm^n)$ functions. 

\begin{lemma} \label{radialest}
Let $n \geq 3$. We have a point-wise estimate $|f(r)| \lesssim_n r^{1/2}(\sinh r)^{-\rho} \|f\|_{H^{0,1}(\Hm^n)}$ for any radial function $f \in \dot{H}^{0,1}(\Hm^n)$. 
\end{lemma}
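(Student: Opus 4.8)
The plan is to reduce the estimate to a one-dimensional problem in the radial variable $r$ and then apply the fundamental theorem of calculus. Since $f$ is radial, in geodesic polar coordinates the volume element is $d\mu = \omega_{n-1}(\sinh r)^{n-1}\,dr$ (with $\omega_{n-1}$ the measure of ${\mathbb S}^{n-1}$) and $|\nabla f| = |f'(r)|$, so that
\[
\|f\|_{\dot{H}^{0,1}(\Hm^n)}^2 = \omega_{n-1}\int_0^\infty \left(|f'(r)|^2 - \rho^2|f(r)|^2\right)(\sinh r)^{n-1}\,dr.
\]
The difficulty is that this integrand is not manifestly nonnegative: the term $-\rho^2|f|^2$ must be absorbed before any pointwise bound can be extracted. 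To handle this I would perform the ground-state substitution $g(r) = (\sinh r)^{\rho} f(r)$, equivalently $f = (\sinh r)^{-\rho} g$, which conjugates the radial part of $-\Delta_{\Hm^n}-\rho^2$ to a one-dimensional Schr\"odinger operator on the half-line taken with respect to the \emph{flat} measure $dr$.

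Carrying out this computation (using $n-1 = 2\rho$ and $(\sinh r)'/\sinh r = \coth r$) gives $f' = (\sinh r)^{-\rho}\bigl(g' - \rho\coth r\,g\bigr)$; after expanding $|f'|^2(\sinh r)^{2\rho}$, writing the cross term as $-\rho\coth r\,(g^2)'$ and integrating it by parts, the weight $(\sinh r)^{2\rho}$ disappears entirely and one is left with
\[
\omega_{n-1}^{-1}\|f\|_{\dot{H}^{0,1}(\Hm^n)}^2 = \int_0^\infty\left[(g'(r))^2 + \frac{\rho(\rho-1)}{\sinh^2 r}\,g(r)^2\right]dr.
\]
Here the identity $\coth^2 r = 1 + 1/\sinh^2 r$ is what cancels the constant part of the effective potential against the $-\rho^2$ coming from the energy. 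The boundary terms vanish: at $r=0$ because $g^2\coth r \sim r^{2\rho-1}\to 0$ whenever $2\rho>1$ (i.e. $n>2$), and at infinity by a density argument in which one works first with radial $f\in C_0^\infty(\Hm^n)$.

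The main point — and the place where the hypothesis $n\ge 3$ enters decisively — is that $\rho = (n-1)/2 \ge 1$ forces $\rho(\rho-1)\ge 0$, so the potential term is nonnegative and may simply be discarded:
\[
\int_0^\infty (g'(r))^2\,dr \le \omega_{n-1}^{-1}\|f\|_{\dot{H}^{0,1}(\Hm^n)}^2.
\]
(For $n=2$ one would instead have $\rho(\rho-1)<0$ and this step would fail, which is consistent with the restriction in the statement.) With this bound in hand the estimate is immediate: since $g(0)=0$, the fundamental theorem of calculus together with Cauchy--Schwarz yields
\[
|g(r)| = \left|\int_0^r g'(s)\,ds\right| \le r^{1/2}\left(\int_0^r (g'(s))^2\,ds\right)^{1/2} \le r^{1/2}\,\omega_{n-1}^{-1/2}\,\|f\|_{\dot{H}^{0,1}(\Hm^n)}.
\]

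Finally I would unwind the substitution, $|f(r)| = (\sinh r)^{-\rho}|g(r)|$, to arrive at
\[
|f(r)| \le \omega_{n-1}^{-1/2}\, r^{1/2}(\sinh r)^{-\rho}\,\|f\|_{\dot{H}^{0,1}(\Hm^n)},
\]
which is the asserted bound, once one recalls that in the notation of Definition~\ref{Sobo} the norms $\|\cdot\|_{H^{0,1}}$ and $\|\cdot\|_{\dot{H}^{0,1}}$ coincide (the operator $\tilde{D}^0$ is the identity). The passage from smooth compactly supported radial functions to a general radial $f\in\dot{H}^{0,1}(\Hm^n)$ is routine by density, the pointwise inequality being stable under the limit. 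I expect the only genuinely delicate point to be the positivity of the conjugated potential, since it is precisely what makes the dimensional restriction $n \ge 3$ essential rather than cosmetic.
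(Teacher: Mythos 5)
Your proposal is correct and is essentially the paper's own argument: the paper likewise works with $g=(\sinh r)^{\rho}f$ (computing $\int_0^R\bigl[\tfrac{d}{dr}(f\sinh^{\rho}r)\bigr]^2dr$, expanding, and integrating the cross term by parts so that the leftover term $(\rho-\rho^{2})\int f^{2}(\sinh r)^{2\rho-2}dr$ has a favorable sign for $n\ge 3$), and then concludes exactly as you do via the fundamental theorem of calculus, Cauchy--Schwarz, and density of smooth compactly supported radial functions. The only difference is organizational---you read the identity as conjugation to a one-dimensional Schr\"odinger operator with potential $\rho(\rho-1)/\sinh^{2}r$, while the paper reads it in the reverse direction---so the two proofs coincide step for step, including the role of $\rho\ge 1$.
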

\begin{proof} 
Without loss of generality we assume that $f$ is smooth and has compact support, since functions of this kind are dense in the space of radial $\dot{H}^{0,1} (\Hm^n)$ functions. Let us first pick up a large radius $R$ so that $\hbox{Supp}(f) \subset B(\mathbf{0}, R)$ and then calculate 
\begin{align*}
 & \int_0^R \left[\frac{d}{dr}\left(f \sinh^\rho r \right)\right]^2\, dr\\
  = & \int_0^R \left[f_r \sinh^\rho r + \rho f \cdot (\sinh r)^{\rho-1}  \cosh r\right]^2\, dr\\
 = & \int_0^R \left[(f_r)^2 \sinh^{2\rho} r + \rho^2 f^2\cdot (\sinh r)^{2(\rho-1)} \cosh^2 r\right]\, dr + \int_0^R \rho (\sinh r)^{2\rho -1} \cosh r\, d(f^2)\\
 = & \int_0^R \left[(f_r)^2 \sinh^{2\rho} r + \rho^2 f^2 \cdot(\sinh r)^{2(\rho-1)} \cosh^2 r\right]\, dr - \int_0^R \rho f^2 \cdot \frac{d}{dr}\left[(\sinh r)^{2\rho-1} \cosh r\right]\, dr\\
 = & \int_0^R \left[(f_r)^2 - \rho^2 f^2\right] \sinh^{2\rho} r\, dr + (\rho - \rho^2) \int_0^R f^2 (\sinh r)^{2\rho-2} \, dr \\
 \lesssim_n & \int_{\Hm^n} [|\nabla f|^2 - \rho^2 |f|^2] d\mu  = \|f\|_{\dot{H}^{0,1} (\Hm^n)}^2. 
\end{align*}
As a result, we have 
\begin{align*}
 f(r') \sinh^\rho r' & = \int_0^{r'} \left[\frac{d}{dr}\left(f \sinh^\rho r \right)\right]\, dr
 \leq (r')^{1/2} \left\{\int_0^{r'} \left[\frac{d}{dr}\left(f \sinh^\rho r \right)\right]^2\, dr\right\}^{1/2}\\
 & \lesssim_n (r')^{1/2} \|f\|_{\dot{H}^{0,1} (\Hm^n)}
\end{align*}
and finish the proof. 
\end{proof}
\begin{remark} The upper bound given in Lemma \ref{radialest} is optimal. Given a smooth cut-off function $\varphi: \Rm \rightarrow [0,1]$ satisfying
\[
 \varphi (r) = \left\{\begin{array}{ll} 1, & \hbox{if $1/2 \leq r \leq 3/2$;}\\
 0, & \hbox{if $r<1/4$ or $r>7/4$;}  \end{array}\right.
\]
we consider a family of radial functions defined in $\Hm^n$:
\[
 f_R (r) = \left\{\begin{array}{ll} r^{1/2 -\rho} \varphi (r/R), & \hbox{if $R \leq 1$;}\\
 e^{-\rho r} r^{1/2} \varphi(r/R), &\hbox{if $R > 1$.}  \end{array}\right.
\]
One can check that $\|f_R (r)\|_{H^{0,1}(\Hm^n)} \lesssim 1$ and $f_R (R) \simeq R^{1/2} (\sinh R)^{-\rho}$.
\end{remark}

\section{Strichartz Estimates}

In this section we introduce a family of Strichartz estimates compatible with initial data in the energy space $H^{0,1}(\Hm^n) \times L^2 (\Hm^n)$ for all dimensions $n \geq 3$. This immediately leads to a local theory when $3 \leq n \leq 5$, which will be introduced in the next section.

\subsection{Preliminary Results}

\begin{definition} \label{originaladmissible}
 Let $n \geq 3$. A couple $(p_1, q_1)$ is called admissible if $(1/p_1, 1/q_1)$ belongs to the set
 \[
   T_n = \left\{\left(\frac{1}{p_1}, \frac{1}{q_1}\right) \in \left(0,\frac{1}{2}\right] \times \left(0, \frac{1}{2}\right) \left| \frac{2}{p_1} + \frac{n-1}{q_1} \geq \frac{n-1}{2}\right.\right\}.
 \]
\end{definition}
\noindent Let us recall the Strichartz estimates with inhomogeneous Sobolev norms. 
\begin{theorem} \label{originalstri} (See Theorem 6.3 in \cite{wavehyper})
 Let $(p_1,q_1)$ and $(p_2, q_2)$ be two admissible pairs. The real numbers $\sigma_1$ and $\sigma_2$ satisfy
 \begin{align*}
  &\sigma_1 \geq \beta (q_1) = \frac{n+1}{2} \left(\frac{1}{2} - \frac{1}{q_1}\right);&
  &\sigma_2 \geq \beta (q_2) = \frac{n+1}{2} \left(\frac{1}{2} - \frac{1}{q_2}\right).&
 \end{align*}
Assume $u(x,t)$ is the solution to the linear shifted wave equation
\begin{equation} \label{linearequation}
 \left\{\begin{array}{l} \partial_t^2 u - (\Delta_{\Hm^n}+\rho^2) u = F(x,t), \,\,\,\, (x,t)\in \Hm^n \times I;\\
u |_{t=0} = u_0; \\
\partial_t u |_{t=0} = u_1.\end{array}\right.
\end{equation}
Here $I$ is an arbitrary time interval containing $0$. Then we have
\begin{align*}
 \|u\|_{L^{p_1} L^{q_1}(I \times \Hm^n)}&
  + \|(u, \partial_t u)\|_{C(I; H^{\sigma_1 -\frac{1}{2},\frac{1}{2}}\times H^{\sigma_1 -\frac{1}{2},-\frac{1}{2}}(\Hm^n))} \\
 & \leq C \left( \|(u_0, u_1)\|_{H^{\sigma_1 -\frac{1}{2},\frac{1}{2}}\times H^{\sigma_1 -\frac{1}{2},-\frac{1}{2}}(\Hm^n)} + \|F\|_{L^{p'_2} (I; H_{q'_2}^{\sigma_1 + \sigma_2 - 1}(\Hm^n))} \right).
\end{align*}
The constant $C$ above does not depend on the time interval $I$.
\end{theorem}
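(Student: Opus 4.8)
The plan is to follow the standard three-step route to Strichartz estimates, adapted to the geometry of $\Hm^n$: first reduce the second-order equation to the half-wave propagators, then establish a frequency-localized dispersive decay estimate for those propagators, and finally feed this into an abstract $TT^\star$ argument together with the $L^2$ Plancherel identity to produce the homogeneous estimate, upgrading to the inhomogeneous form by the Christ--Kiselev lemma and duality. I would begin by writing the solution $u$ of \eqref{linearequation} through the functional calculus for the operator $\sqrt{-\Delta_{\Hm^n}-\rho^2}$, whose Fourier symbol is exactly $|\lambda|$ in view of the identity $\widetilde{-\Delta_{\Hm^n} f} = (\lambda^2+\rho^2)\tilde f$. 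The free evolution is then a superposition of the half-wave operators $e^{\pm it\sqrt{-\Delta_{\Hm^n}-\rho^2}}$, while Duhamel's formula accounts for $F$. Since the symbol is $|\lambda|$, each propagator acts as a convolution (in the sense of the Convolution paragraph) against a radial kernel whose Fourier transform is $e^{\pm it|\lambda|}$ times a smooth frequency cut-off.

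The analytic heart is the dispersive estimate, and I would obtain it through a Littlewood--Paley decomposition in the Fourier variable $\lambda$. For each dyadic piece I would estimate the convolution kernel using the integral representation \eqref{formulaphi1} of $\Phi_\lambda$ together with the Plancherel density bound $|\mathbf{c}(\lambda)|^{-2}\lesssim |\lambda|^2(1+|\lambda|)^{n-3}$. The two frequency regimes behave quite differently: for $|\lambda|\lesssim 1$ the density is $\sim|\lambda|^2$ and the propagator mimics a low-frequency Euclidean wave, whereas for $|\lambda|\gtrsim 1$ the exponential volume growth of $\Hm^n$, reflected in the factor $e^{-\rho r}$ of \eqref{universalphi}, produces strictly stronger dispersion. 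This improved high-frequency decay is precisely what widens the admissible set to the whole region $T_n$ (rather than the narrower Euclidean range) and what pins down the derivative loss $\beta(q)=\frac{n+1}{2}\bigl(\frac12-\frac1q\bigr)$ after one optimizes, at each dyadic scale, the interpolation between the $L^2\to L^2$ bound and the $L^1\to L^\infty$ dispersive bound.

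With the scale-localized dispersive estimate in hand, the homogeneous bound $\|u\|_{L^{p_1}L^{q_1}(I\times\Hm^n)}\lesssim \|(u_0,u_1)\|_{H^{\sigma_1-1/2,1/2}\times H^{\sigma_1-1/2,-1/2}(\Hm^n)}$ follows from the Keel--Tao $TT^\star$ machinery applied piece by piece, using the $L^2$ isometry of the Fourier transform (the Plancherel identity) for the energy input and then reassembling the dyadic contributions with the Sobolev weights $\tilde D^{\sigma_1}$, $D^{1/2}$. The $C(I;H^{\sigma_1-1/2,1/2}\times H^{\sigma_1-1/2,-1/2})$ control is nothing but the conservation of the linear energy along the flow. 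For the source term I would avoid symmetrizing the two index pairs directly; instead I would prove the dual homogeneous estimate for the adjoint pair associated with $(p_2,q_2)$, compose it with the forward estimate for $(p_1,q_1)$, and invoke the Christ--Kiselev lemma to restrict the resulting double-time integral to $\{s<t\}$, which produces exactly the mixed-exponent bound with $F\in L^{p_2'}\bigl(I;H^{\sigma_1+\sigma_2-1}_{q_2'}(\Hm^n)\bigr)$. The independence of the constant from $I$ is automatic, because every estimate in the chain is global in time.

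The step I expect to be the main obstacle is the dyadic reassembly in the dispersive analysis. One must track the scale-dependent constants carefully enough that the derivative thresholds $\sigma_j\geq\beta(q_j)$ are precisely what make the geometric summation over frequency scales converge; too crude a bound either loses the full region $T_n$ or inflates the derivative count beyond $\beta(q)$. Controlling the transition region $|\lambda|\sim 1$, where the density $|\mathbf{c}(\lambda)|^{-2}$ changes its power-law behaviour and the stationary-phase analysis of the kernel from \eqref{formulaphi1} is least clean, is the most delicate part of that bookkeeping.
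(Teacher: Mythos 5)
A point of context first: the paper never proves this statement at all --- Theorem \ref{originalstri} is imported verbatim from Anker--Pierfelice--Vallarino \cite{wavehyper} (their Theorem 6.3), so your attempt can only be measured against the argument in that reference. Your outline does share its broad architecture: spherical Fourier analysis, kernel bounds for the half-wave propagators, a $TT^\star$ argument, and duality plus Christ--Kiselev for the retarded term. The problem is not the architecture but the analytic engine you put inside it.

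The genuine gap is the mechanism you propose for reaching the admissible set $T_n$. You plan to ``optimize, at each dyadic scale, the interpolation between the $L^2\to L^2$ bound and the $L^1\to L^\infty$ dispersive bound'' and then run Keel--Tao. That scheme converts a dispersive decay rate $|t|^{-\gamma}$ into the admissibility condition $\frac{1}{p}\leq \gamma\left(\frac{1}{2}-\frac{1}{q}\right)$; with the wave rate $\gamma=\frac{n-1}{2}\left(1-\frac{2}{q}\right)\cdot\frac{q}{q-2}$ normalized in the usual way this is exactly the Euclidean region $\frac{2}{p}+\frac{n-1}{q}\leq\frac{n-1}{2}$. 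But the set $T_n$ of Definition \ref{originaladmissible} is cut out by the \emph{reversed} inequality $\frac{2}{p_1}+\frac{n-1}{q_1}\geq\frac{n-1}{2}$: it contains pairs with both $p_1$ and $q_1$ close to $2$, which in $\Rm^n$ are genuinely false by the Knapp example, and which no interpolation of pointwise dispersive bounds with $L^2$ conservation can ever produce. What actually yields this region in \cite{wavehyper} is a pair of non-Euclidean ingredients that your proposal never invokes: the dimension-independent large-time decay of the shifted-wave kernel (of order $|t|^{-3/2}$ for $|t|\geq 1$, uniformly in the frequency localization), and the Kunze--Stein phenomenon --- precisely Lemma \ref{ksphenomenon} of this paper, which the text explicitly flags as playing ``an important role in the proof'' --- giving convolution bounds directly from $L^{q'}$ to $L^q$ against the weight $(\sinh r)^{n-1}(\Phi_0(r))^\gamma$, something with no analogue on $\Rm^n$. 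Your attribution of the gain to ``improved high-frequency decay'' is also backwards: at high frequency and small time the hyperbolic kernel behaves exactly like the Euclidean one, and that regime is where the loss $\beta(q)=\frac{n+1}{2}\left(\frac{1}{2}-\frac{1}{q}\right)$ comes from; the enlargement of the admissible region is a large-time, spatial effect, driven by the $e^{-\rho r}$ decay in \eqref{universalphi} feeding into Kunze--Stein. As written, your plan proves at best the Euclidean-range portion of the estimate (essentially only the boundary line of $T_n$), not the theorem as stated.
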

The following lemma (see lemma 5.1 in \cite{wavehyper}) plays an important role in the proof of the Strichartz estimates above. It is obtained by a complex interpolation and the Kunze-Stein phenomenon.
\begin{lemma} \label{ksphenomenon}
There exists a constant $C > 0$ such that, for every radial measurable function $\kappa$ on $\Hm^n$, every $2 \leq q, \tilde{q} < \infty$ and $f \in L^{\tilde{q}'}(\Hm^n)$, we have 
\[
 \|f \ast \kappa\|_{L^q} \leq C \|f\|_{L^{\tilde{q}'}}\left(\int_0^\infty (\sinh r)^{n-1} (\Phi_0(r))^\gamma |\kappa (r)|^Q dr \right)^{1/Q}.
\]
Here $\gamma = \frac{2\min \{q, \tilde{q}\}}{q + \tilde{q}}$ and $Q = \frac{q \tilde{q}}{q + \tilde{q}}$.
\end{lemma}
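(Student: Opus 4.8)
The plan is to deduce the estimate from Stein's theorem on complex interpolation, applied to an analytic family of convolution operators $T_\kappa f = f \ast \kappa$ whose two boundary behaviours are controlled, respectively, by the Plancherel identity (which supplies the spherical‑function weight $\Phi_0$) and by Young's convolution inequality on the unimodular group $\mathbb{G}=SO(1,n)$ (which supplies the change of integrability). The net strengthening over the trivial Young bound is precisely the exponentially decaying weight $\Phi_0^\gamma$, and this is where the Kunze--Stein phenomenon is encoded in the statement.

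First I would record the two endpoint estimates. For the $L^2 \to L^2$ endpoint, the Fourier identity $\widetilde{f\ast\kappa}(\lambda,\omega)=\tilde f(\lambda,\omega)\,\tilde\kappa(\lambda)$ together with the Plancherel identity gives $\|f\ast\kappa\|_{L^2}\le\|\tilde\kappa\|_{L^\infty}\|f\|_{L^2}$, while the universal bound $|\Phi_\lambda(r)|\le\Phi_0(r)$ from \eqref{universalphi} yields $\|\tilde\kappa\|_{L^\infty}\le\int_0^\infty|\kappa(r)|\Phi_0(r)(\sinh r)^{n-1}\,dr$; this is exactly the case $q=\tilde q=2$, where the full power $\Phi_0^1$ is present. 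For the second endpoint I would invoke Young's inequality on $\mathbb{G}$: in the regime $q\le\tilde q$ the bound $\|f\ast\kappa\|_{L^{q_1}}\le\|f\|_{L^1}\|\kappa\|_{L^{q_1}}$, and in the complementary regime $q\ge\tilde q$ the dual bound $\|f\ast\kappa\|_{L^\infty}\le\|f\|_{L^{p_1}}\|\kappa\|_{L^{p_1'}}$. Neither of these carries a $\Phi_0$ weight; the weight in the final estimate will be manufactured entirely by interpolation toward the Plancherel endpoint.

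The core step is the interpolation itself. Normalising $\int_0^\infty(\sinh r)^{n-1}\Phi_0(r)^\gamma|\kappa(r)|^Q\,dr=1$, I would introduce the analytic family $\kappa_z=\frac{\kappa}{|\kappa|}\,|\kappa|^{\zeta(z)}\,\Phi_0^{\eta(z)}$, with $\zeta,\eta$ affine in $z$ chosen so that at $\operatorname{Re}z=0$ the weighted $L^1$ mass $\int|\kappa_z|\Phi_0(\sinh r)^{n-1}dr$ equals the normalised quantity (forcing $\zeta(0)=Q$, $\eta(0)=\gamma-1$), at $\operatorname{Re}z=1$ the $L^{q_1}$ norm of $\kappa_z$ equals it (forcing $\zeta(1)=Q/q_1$, $\eta(1)=\gamma/q_1$), and at the interpolation parameter $\theta$ (fixed by the exponent relations, e.g.\ $\tfrac1{\tilde q}=\tfrac{1-\theta}{2}$ when $q\le\tilde q$) one has $\zeta(\theta)=1$, $\eta(\theta)=0$, so that $\kappa_\theta=\kappa$. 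A direct computation using $\tfrac1Q=\tfrac1q+\tfrac1{\tilde q}$ and $\gamma=(1-\theta)Q$ shows these conditions are compatible and reproduces exactly $\gamma=\frac{2\min\{q,\tilde q\}}{q+\tilde q}$, the two regimes corresponding to the two choices of Young endpoint. Since $|\kappa_z|$ is independent of $\operatorname{Im}z$ on each boundary line, the endpoint operator norms are uniform in $\operatorname{Im}z$, and $z\mapsto\langle T_{\kappa_z}f,g\rangle$ is analytic with admissible growth for simple $f,g$; Stein's theorem then yields $\|f\ast\kappa\|_{L^q}\le C\|f\|_{L^{\tilde q'}}$, which is the claim after undoing the normalisation.

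I expect the main obstacle to be bookkeeping rather than any single hard inequality: one must split into the two regimes $q\lessgtr\tilde q$, pick the matching Young endpoint, solve for the affine exponents $\zeta,\eta$, and check the compatibility identities together with the analyticity and uniform boundary bounds required by Stein's theorem. A secondary point demanding care is to ensure the weight survives interpolation with the correct exponent, namely that the power of $\Phi_0$ produced is precisely $\gamma=\frac{2\min\{q,\tilde q\}}{q+\tilde q}$ and not some larger or smaller power; this is the delicate accounting that distinguishes the sharp group estimate from the elementary Young bound.
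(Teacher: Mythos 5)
Your proposal is correct and is essentially the argument behind the paper's version of this result: the paper does not prove Lemma \ref{ksphenomenon} itself but quotes it as Lemma 5.1 of \cite{wavehyper}, remarking that it is ``obtained by a complex interpolation and the Kunze-Stein phenomenon,'' and your Stein-interpolation scheme between the Plancherel/Herz endpoint $L^2\to L^2$ (with kernel norm $\int_{\Hm^n}|\kappa|\,\Phi_0\,d\mu$, using $|\Phi_\lambda|\le\Phi_0$ and $\widetilde{f\ast\kappa}=\tilde f\,\tilde\kappa$) and the Young endpoints on the unimodular group is exactly that argument. Your exponent bookkeeping also checks out: with $\theta$ fixed by $1/\tilde q=(1-\theta)/2$ when $q\le\tilde q$ (symmetrically $1/q=(1-\theta)/2$ when $q\ge\tilde q$), one gets $Q=\frac{q\tilde q}{q+\tilde q}$ and $\gamma=(1-\theta)Q=\frac{2\min\{q,\tilde q\}}{q+\tilde q}$, as claimed.
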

\noindent The following lemma comes from a basic Fourier analysis
\begin{lemma} \label{L1L2striinter}
If $F  \in L^1 L^2 (\Rm \times \Hm^n)$, then we have
\[
 \left\|\int_{-\infty}^{\infty} e^{\pm i s D} F(\cdot, s) \,ds\right\|_{L^2(\Hm^n)} \leq \|F\|_{L^1 L^2 (\Rm \times \Hm^n)}.
\]
\end{lemma}
\subsection{Strichartz Estimates for $H^{0,1} \times L^2$ data}

\begin{definition}
 We fix $\chi: \Rm \rightarrow [0,1]$ to be an even, smooth cut-off function so that
 \[
  \chi(r) = \left\{\begin{array}{ll} 1, & |r|<1;\\
  0, & |r|>3/2.
  \end{array}\right.
 \]
\end{definition} 

\begin{lemma} \label{Striinter1} Given any $2 < p ,q < \infty$ and $\sigma \in \Rm$ we have
\[
  \left\|\chi (D) D^{-1} \tilde{D}^{1 -\sigma} e^{\pm i t D} f\right\|_{L^p L^q(\Rm \times \Hm^n)} \lesssim \|f\|_{L^2 (\Hm^n)}.
\]
\end{lemma}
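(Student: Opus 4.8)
The natural route is a $TT^\star$ argument that reduces the claimed $L^2 \to L^p L^q$ bound to a dispersive estimate on a radial convolution kernel, which is then processed through the Kunze--Stein inequality of Lemma \ref{ksphenomenon}. Write $T$ for the operator $f \mapsto \chi(D) D^{-1} \tilde{D}^{1-\sigma} e^{\pm itD} f$ from $L^2(\Hm^n)$ into $L^pL^q(\Rm \times \Hm^n)$, and let $M = \chi(D) D^{-1} \tilde{D}^{1-\sigma}$, which is a self-adjoint Fourier multiplier. Since $(e^{\pm itD})^\star = e^{\mp itD}$, the boundedness of $T$ is equivalent, up to a square root of the norm, to the boundedness of $TT^\star : L^{p'}L^{q'} \to L^pL^q$. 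A computation on the Fourier side shows that $TT^\star$ is the time-convolution
\[
 (TT^\star G)(\cdot, t) = \int_{\Rm} K_{t-s} \ast G(\cdot, s)\, ds,
\]
where $K_\tau$ is the radial kernel on $\Hm^n$ with spherical transform $\widetilde{K_\tau}(\lambda) = \chi(\lambda)^2 \lambda^{-2} (\lambda^2 + \rho^2 + 1)^{1-\sigma} e^{\pm i\tau\lambda}$.

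The heart of the matter is a pointwise estimate on $K_\tau(r)$. The key structural observation is that the Plancherel density satisfies $|\mathbf{c}(\lambda)|^{-2} \sim c\lambda^2$ near $\lambda = 0$, which exactly cancels the singular factor $\lambda^{-2}$. Hence the profile $a(\lambda) = \chi(\lambda)^2 \lambda^{-2} (\lambda^2 + \rho^2 + 1)^{1-\sigma} |\mathbf{c}(\lambda)|^{-2}$ is an integrable, compactly supported function that is regular at the origin, and the factor $\tilde{D}^{1-\sigma}$ plays no essential role, being a smooth bounded multiplier on $\{|\lambda| \le 3/2\}$ that is simply absorbed into $a$. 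Inserting the integral representation $\Phi_\lambda(r) = c_n (\sinh r)^{2-n} \int_{-r}^{r} (\cosh r - \cosh u)^{(n-3)/2} e^{-i\lambda u}\, du$ into the inverse transform and performing the $\lambda$-integral first, one obtains
\[
 K_\tau(r) = c_n (\sinh r)^{2-n} \int_{-r}^{r} (\cosh r - \cosh u)^{\frac{n-3}{2}} \left( \int_0^\infty a(\lambda) e^{i\lambda(\pm\tau - u)}\, d\lambda \right) du.
\]
Since $a$ is compactly supported and regular away from $\lambda=0$, integration by parts in $\lambda$ gives the decay $|\int_0^\infty a(\lambda)e^{i\lambda s}\,d\lambda| \lesssim (1+|s|)^{-1}$. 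Combined with the identity $\Phi_0(r) = c_n(\sinh r)^{2-n}\int_{-r}^r(\cosh r-\cosh u)^{(n-3)/2}\,du$, this yields the trivial bound $|K_\tau(r)| \lesssim \Phi_0(r)$ together with the gain $|K_\tau(r)| \lesssim \Phi_0(r)\,(1 + |\tau| - r)^{-1}$ in the regime $r \le |\tau|$.

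Feeding these bounds into Lemma \ref{ksphenomenon} with $\tilde q = q$ (so $\gamma = 1$ and $Q = q/2$) produces the fixed-time operator estimate
\[
 \|K_\tau \ast g\|_{L^q(\Hm^n)} \lesssim w(\tau)\, \|g\|_{L^{q'}(\Hm^n)}, \qquad w(\tau) = \left( \int_0^\infty (\sinh r)^{n-1} \Phi_0(r)\, |K_\tau(r)|^{q/2}\, dr \right)^{2/q}.
\]
Using $(\sinh r)^{n-1}\Phi_0(r) \sim e^{\rho r}(1+r)$ and $\Phi_0(r) \lesssim e^{-\rho r}(1+r)$, the contribution from $r \ge |\tau|$ to the $r$-integral is exponentially small in $\tau$ when $q > 2$, while the region $r \le |\tau|$ contributes $\lesssim (1+|\tau|)^{-q/2}$; altogether $w(\tau) \lesssim (1+|\tau|)^{-1}$. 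Finally, Young's inequality in time (with $1/(p/2) + 1/p' = 1 + 1/p$) gives
\[
 \|TT^\star G\|_{L^pL^q} \lesssim \left\| \int_\Rm w(t-s)\, \|G(\cdot,s)\|_{L^{q'}}\, ds \right\|_{L^p_t} \lesssim \|w\|_{L^{p/2}(\Rm)}\, \|G\|_{L^{p'}L^{q'}},
\]
which is the desired $TT^\star$ bound. The main obstacle is the kernel analysis of the second paragraph: one must verify the cancellation of the $\lambda^{-2}$ singularity by $|\mathbf{c}(\lambda)|^{-2}$ and extract the sharp $(1+|\tau|)^{-1}$ decay of $w$. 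This decay is exactly order one, so $w \in L^{p/2}(\Rm)$ precisely when $p > 2$, showing that the hypothesis $2 < p < \infty$ is sharp for this argument.
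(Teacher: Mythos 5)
Your proof is correct, and its skeleton coincides with the paper's: a $TT^\star$ reduction, a pointwise dispersive bound on the radial kernel of $\chi^2(D)D^{-2}\tilde{D}^{2-2\sigma}e^{\pm i\tau D}$, an application of Lemma \ref{ksphenomenon} with $\tilde{q}=q$ (so $\gamma=1$, $Q=q/2$), and Young's inequality in time, which is where $p>2$ enters in both arguments. The genuine difference lies in how the kernel bound is derived. The paper keeps $\Phi_\lambda(r)$ intact, uses the same cancellation you identify (via the identity $|\mathbf{c}(\lambda)|^{-2}=|C_n|^{-2}\lambda^2|\Gamma(i\lambda+\rho)|^2/|\Gamma(i\lambda+1)|^2$, the smooth profile $a$ absorbs the $\lambda^{-2}$ singularity), and integrates by parts once in $\lambda$; this requires the derivative bound $|\partial_\lambda\Phi_\lambda(r)|\lesssim e^{-\rho r}r(r+1)$, extracted from the spherical representation \eqref{formulaphi1}, and yields the bound $|\kappa^\sigma_\tau(r)|\lesssim|\tau|^{-1}e^{-\rho r}(1+r)^2$ uniformly in $r$ when $|\tau|>2$. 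You instead insert the Abel-type representation $\Phi_\lambda(r)=c_n(\sinh r)^{2-n}\int_{-r}^{r}(\cosh r-\cosh u)^{(n-3)/2}e^{-i\lambda u}\,du$, swap the order of integration, and use one-dimensional Fourier decay of $a$; this avoids differentiating $\Phi_\lambda$ altogether but produces a bound that degrades near the light cone, namely $\Phi_0(r)(1+|\tau|-r)^{-1}$ only for $r\le|\tau|$ and the trivial bound $\Phi_0(r)$ beyond it, so that inside the Kunze--Stein integral you must (and correctly do) invoke $q>2$ to make the region $r\ge|\tau|$ exponentially small. Both routes land on the same fixed-time operator decay $\min\{1,|\tau|^{-1}\}$, and the final Young step is identical; the paper's version buys uniformity in $r$ at the cost of estimating $\partial_\lambda\Phi_\lambda$, yours buys a cleaner $\lambda$-analysis at the cost of a case distinction in $r$. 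One small slip of wording: where you say $a$ is ``regular away from $\lambda=0$,'' what your integration by parts actually needs (and what the cancellation by $|\mathbf{c}(\lambda)|^{-2}$ provides) is smoothness of $a$ \emph{up to} $\lambda=0$, since the boundary term $a(0)/(is)$ at the origin is precisely what survives and must be finite, and $a'$ must be integrable on $[0,\infty)$; as stated earlier in your own argument this is indeed the case, so the slip is cosmetic.
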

\begin{proof}
Consider the operator $\chi^2 (D) D^{-2} \tilde{D}^{2 - 2\sigma} e^{i t D}$ defined by the Fourier multiplier
\[
\lambda \rightarrow \chi^2 (\lambda) \lambda^{-2} (\lambda^2 + \rho^2 + 1)^{1 -\sigma} e^{i t \lambda}
\]
and its kernel
\[
 \kappa_t^{\sigma} (r) = \hbox{const.} \int_0^2 \chi^2 (\lambda) \lambda^{-2} (\lambda^2 + \rho^2 + 1)^{1 -\sigma} e^{i t \lambda} \Phi_\lambda (r) |\mathbf{c}(\lambda)|^{-2} d\lambda.
\]
If $|t|\leq 2$, we recall $\left|\mathbf{c} (\lambda)\right|^{-2} \lesssim \lambda^2(1+|\lambda|)^{n-3}$ and obtain
\begin{equation}
 |\kappa_t^{\sigma} (r)| \lesssim \int_0^2 \chi^2 (\lambda) \lambda^{-2} (\lambda^2 + \rho^2 + 1)^{1 -\sigma} |e^{i t \lambda}| \cdot \Phi_0 (r) |\mathbf{c}(\lambda)|^{-2} d\lambda \lesssim \Phi_0 (r).
\end{equation}
Now let us consider the other case $|t| > 2$. By the definition of $\mathbf{c}(\lambda)$ we have
\[
 |\mathbf{c}(\lambda)|^{-2} = |C_n|^{-2} \frac{|\Gamma(i\lambda +\rho)|^2}{|\Gamma(i\lambda)|^2} = |C_n|^{-2} \lambda^2 \frac{|\Gamma(i\lambda +\rho)|^2}{|\Gamma(i\lambda+1)|^2}.
\]
Thus we can rewrite the kernel $\kappa_t^\sigma$ into
\[
 \kappa_t^{\sigma} (r) = \int_0^2 a(\lambda) e^{i t \lambda} \Phi_\lambda (r) d\lambda.
\]
Here the function
\[
  a(\lambda) = \hbox{const.} \chi^2 (\lambda)(\lambda^2 + \rho^2 + 1)^{1 -\sigma} \frac{|\Gamma(i\lambda +\rho)|^2}{|\Gamma(i\lambda+1)|^2}
\]
is smooth in $\Rm$. In addition, the function $\Phi_\lambda (r)$ satisfies
\begin{align*}
 \Phi_\lambda(r) = & c_n \int_0^\pi (\cosh r - \sinh r \cos \theta)^{-i\lambda -\rho} (\sin \theta)^{n-2} d\theta;\\
 |\Phi_\lambda(r)| \leq & \Phi_0 (r) = c_n \int_0^\pi (\cosh r - \sinh r \cos \theta)^{-\rho} (\sin \theta)^{n-2} d\theta \lesssim e^{-\rho r}(r+1);\\
 \partial_\lambda \Phi_\lambda (r) = & -c_n i \int_0^\pi \left[(\cosh r - \sinh r \cos \theta)^{-i\lambda -\rho} \ln (\cosh r- \sinh r \cos\theta)\right] (\sin \theta)^{n-2} d\theta;\\
 |\partial_\lambda \Phi_\lambda (r)| \leq & c_n \int_0^\pi (\cosh r - \sinh r \cos \theta)^{-\rho} |\ln (\cosh r- \sinh r \cos\theta)| (\sin \theta)^{n-2} d\theta\\
  \leq & \left(\sup_{\theta \in [0,\pi]} |\ln (\cosh r- \sinh r \cos\theta)|\right) \Phi_0 (r) \lesssim e^{-\rho r} r(r+1).
\end{align*}
We apply integration by parts on $\kappa_t^\sigma$ and obtain
\begin{align*}
 \kappa_t^{\sigma} (r) = & \frac{1}{i t} \int_0^2 a(\lambda) \Phi_\lambda (r) d (e^{i t \lambda})\\
 = & \frac{1}{it}\left[a(2) \Phi_2 (r) e^{2it} - a(0) \Phi_0 (r) \right] - \frac{1}{i t} \int_0^2 \partial_\lambda \left[a (\lambda) \Phi_\lambda (r)\right] e^{i t \lambda} d\lambda\\
 = & \frac{i}{t} a(0) \Phi_0 (r) + \frac{i}{t} \int_0^2 \left[(\partial_\lambda a (\lambda)) \Phi_\lambda (r) + a (\lambda) (\partial_\lambda \Phi_\lambda (r)) \right] e^{i t \lambda} d\lambda.
\end{align*}
As a result we have 
\begin{align*}
 |\kappa_t^{\sigma} (r)| \lesssim & \frac{|a(0)|}{|t|} \Phi_0 (r) + \frac{1}{|t|} \int_0^2 \left[|\partial_\lambda a (\lambda)| |\Phi_\lambda (r)| + |a (\lambda)| |\partial_\lambda \Phi_\lambda (r)| \right] d\lambda\\
 \lesssim & |t|^{-1} e^{-\rho r}(r+1) + |t|^{-1} \int_0^2 \left( \Phi_0 (r) + |\partial_\lambda \Phi_\lambda (r)| \right) d\lambda\\
 \lesssim & |t|^{-1} e^{-\rho r}(r+1)^2.
\end{align*}
Now let us apply Lemma \ref{ksphenomenon} with kernel $\kappa_t^\sigma$ and $\tilde{q}=q > 2$. In this case $\gamma =1$ and $Q = q/2 > 1$. The integral in Lemma \ref{ksphenomenon} can be estimated by:
\begin{itemize}
  \item If $|t| \leq 2$, we have
  \begin{align*}
   \int_0^{+\infty} (\sinh r)^{n-1} (\Phi_0(r))^\gamma |\kappa_t^\sigma (r)|^Q dr \lesssim & \int_0^{+\infty} (\sinh r)^{2\rho} \Phi_0(r) |\Phi_0 (r)|^{q/2} dr\\
   \lesssim & \int_0^{+\infty} (\sinh r)^{2\rho} \left(e^{-\rho r}(r+1)\right)^{1 + q/2} dr \lesssim 1.
  \end{align*}
  \item If $|t| > 2$, we have
  \begin{align*}
   \int_0^{+\infty} (\sinh r)^{n-1} (\Phi_0(r))^\gamma |\kappa_t^\sigma (r)|^Q dr
   \lesssim & \int_0^{+\infty} (\sinh r)^{2\rho} e^{-\rho r}(r+1) [|t|^{-1} e^{-\rho r} (r+1)^2]^{q/2} dr\\
   \lesssim & |t|^{-q/2} \int_0^{+\infty} (\sinh r)^{2\rho} e^{-(1+ q/2)\rho r}(r+1)^{q+1} dr\\
   \lesssim & |t|^{-q/2}.
  \end{align*}
\end{itemize}
According to Lemma \ref{ksphenomenon}, we immediately have ($q > 2$)
\begin{equation}\label{kernel estimate1}
 \left\|\chi^2 (D) D^{-2} \tilde{D}^{2 - 2\sigma} e^{\pm i t D}\right\|_{L^{q'} \rightarrow L^{q}} \lesssim \left\{\begin{array}{ll}
 1, & |t|\leq 2;\\
 |t|^{-1}, & |t| > 2.
 \end{array}\right.
\end{equation}
Let us consider the operators
\begin{align*}
 \mathbf{T} f & = \chi (D) D^{-1} \tilde{D}^{1 -\sigma} e^{\pm i t D} f;\\
 \mathbf{T}^* F & = \int_{-\infty}^\infty \chi (D) D^{-1} \tilde{D}^{1 -\sigma} e^{\mp i s D} F(\cdot, s) \,ds;\\
 \mathbf{T} \mathbf{T}^* F & = \int_{-\infty}^\infty \chi^2 (D) D^{-2} \tilde{D}^{2 -2\sigma} e^{\pm i(t-s) D} F(\cdot, s)\, ds;
\end{align*}
This is clear that $\mathbf{T}^*$ is an operator from $L^1 (\Rm, H^{1-\sigma,-1}(\Hm^n)) \cap L^{p'} L^{q'}$ to $L^2 (\Hm^n)$, and that $\mathbf{T}$ is an operator from $L^2 (\Hm^n)$ to $L^\infty (\Rm, H^{\sigma-1, 1}(\Hm^n))$. Furthermore, the estimate (\ref{kernel estimate1}) guarantees the inequality
\[
 \left\|\mathbf{T} \mathbf{T}^* F\right\|_{L^p L^q(\Rm \times \Hm^n)} \lesssim \|F\|_{L^{p'} L^{q'} (\Rm \times \Hm^n)}
\]
holds as long as $p > 2$. By the $\mathbf{T} \mathbf{T}^*$ argument (see \cite{strichartz}, for instance), we obtain
\[
 \left\|\chi (D) D^{-1} \tilde{D}^{1 -\sigma} e^{\pm i t D} f\right\|_{L^p L^q(\Rm \times \Hm^n)} = \|\mathbf{T} f\|_{L^p L^q(\Rm \times \Hm^n)} \lesssim \|f\|_{L^2(\Hm^n)}
\]
thus finish the proof.
\end{proof}
\begin{theorem} [Strichartz estimates for $H^{0,1} \times L^2$ initial data] \label{strichartzpre}
Let $n\geq 3$. If $2<p,q<\infty$ and $\sigma$ satisfy
\begin{align*}
  &\frac{2}{p} + \frac{n-1}{q} \geq \frac{n-1}{2};& &\sigma \geq \frac{n+1}{2} \left(\frac{1}{2} - \frac{1}{q}\right);&
\end{align*}
then there exists a constant $C$, so that the solution $u$ to linear shifted wave equation $\partial_t^2 u - \Delta_{\Hm^n} u = F$, $(x,t)\in \Hm^n \times I$ with initial data $(u_0,u_1)$ satisfies
\begin{align*}
 \|\tilde{D}^{1-\sigma} u\|_{L^{p} L^{q}(I \times \Hm^n)}&
  + \|(u, \partial_t u)\|_{C(I; H^{0,1}\times L^2(\Hm^n))} \\
 & \leq C \left( \|(u_0, u_1)\|_{H^{0,1}\times L^2(\Hm^n)} + \|F\|_{L^1 L^2 (I\times \Hm^n)} \right).
\end{align*} \label{StriH01a}
\end{theorem}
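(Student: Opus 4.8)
The plan is to write the solution through Duhamel's formula,
\[
u(t)=\cos(tD)u_0+\tfrac{\sin(tD)}{D}u_1+\int_0^t \tfrac{\sin((t-s)D)}{D}F(s)\,ds, \qquad D=(-\Delta_{\Hm^n}-\rho^2)^{1/2},
\]
and to reduce the whole Strichartz bound to a single homogeneous estimate for the half-wave propagator,
\[
\left\|\tilde{D}^{1-\sigma}D^{-1}e^{\pm itD}g\right\|_{L^pL^q(\Rm\times\Hm^n)}\lesssim\|g\|_{L^2(\Hm^n)}. \qquad (\star)
\]
Indeed, writing $\cos(tD)=\frac12(e^{itD}+e^{-itD})$ and $\frac{\sin(tD)}{D}=\frac{1}{2iD}(e^{itD}-e^{-itD})$ and using $u_0=D^{-1}(Du_0)$ with $Du_0\in L^2$ (recall $\|u_0\|_{H^{0,1}}=\|Du_0\|_{L^2}$), the estimate $(\star)$ applied with $g=Du_0$ and with $g=u_1$ controls the Strichartz norm of the homogeneous part by $\|(u_0,u_1)\|_{H^{0,1}\times L^2}$.

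To prove $(\star)$ I would split the frequency into low and high parts by the cut-off $\chi(D)$. The low-frequency piece $\chi(D)\tilde{D}^{1-\sigma}D^{-1}e^{\pm itD}g$ is exactly the object bounded in Lemma \ref{Striinter1}, which gives $\lesssim\|g\|_{L^2}$ for all $2<p,q<\infty$. For the high-frequency piece I would invoke Theorem \ref{originalstri} with $F=0$, $(p_1,q_1)=(p,q)$ and $\sigma_1=\sigma$: the two hypotheses $\frac{2}{p}+\frac{n-1}{q}\geq\frac{n-1}{2}$ and $\sigma\geq\frac{n+1}{2}(\frac12-\frac1q)$ are precisely admissibility of $(p,q)$ in the sense of Definition \ref{originaladmissible} together with $\sigma_1\geq\beta(q_1)$. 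On the support of $1-\chi(D)$ one has $|\lambda|\geq1$, so $D$ and $\tilde{D}$ are comparable and $(1-\chi(\lambda))|\lambda|^{-1/2}(\lambda^2+\rho^2+1)^{1/4}$ is a bounded $L^2$-multiplier; this reconciles the $H^{\sigma-1/2,\pm1/2}$ norms native to Theorem \ref{originalstri} with $\|g\|_{L^2}$ on the high-frequency part. Adding the two pieces yields $(\star)$.

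For the forcing term I would first treat the non-retarded operator $\int_{-\infty}^\infty\frac{\sin((t-s)D)}{D}F(s)\,ds=\frac{1}{2iD}(e^{itD}g_+-e^{-itD}g_-)$, where $g_\pm=\int_{-\infty}^\infty e^{\mp isD}F(s)\,ds$. Here Lemma \ref{L1L2striinter} delivers $\|g_\pm\|_{L^2}\lesssim\|F\|_{L^1L^2}$, and then $(\star)$ applied to $g_\pm$ bounds the Strichartz norm of the non-retarded operator by $\|F\|_{L^1L^2}$; the Christ--Kiselev lemma upgrades this to the retarded Duhamel integral $\int_0^t$, which is legitimate since the time exponents satisfy $1<p$. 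The remaining energy bound $\|(u,\partial_tu)\|_{C(I;H^{0,1}\times L^2)}$ follows from the unitarity of $\mathbf{S}_L(t)$ on the energy space together with Minkowski's inequality, which gives $\int_0^t\|F(s)\|_{L^2}\,ds\leq\|F\|_{L^1L^2}$ for the inhomogeneous contribution and equality for the homogeneous one.

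The main obstacle is the inhomogeneous $L^1L^2$ endpoint: it cannot be reached by any admissible choice of $(p_2,q_2)$ in Theorem \ref{originalstri}, since producing an $L^1L^2$ forcing norm would require $(p_2,q_2)=(\infty,2)$, which lies outside the admissible set. One is therefore forced to combine the homogeneous half-wave estimate $(\star)$ with the $L^1L^2\to L^2$ bound of Lemma \ref{L1L2striinter} and to pay the price of a Christ--Kiselev truncation. A secondary, purely technical, point is the bookkeeping of the low/high frequency split and the equivalence $D\simeq\tilde{D}$ on high frequencies that converts the $H^{0,1}\times L^2$ data norm into the $H^{\sigma-1/2,\pm1/2}$ norms used by Theorem \ref{originalstri}; the genuinely delicate low-frequency analysis has already been absorbed into Lemma \ref{Striinter1}.
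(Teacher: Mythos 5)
Your proposal is correct and follows essentially the same route as the paper: the paper likewise reduces to the half-wave bound (its estimate \eqref{leftestimate}) by combining Lemma \ref{Striinter1} for the $\chi(D)$ piece with Theorem \ref{originalstri} applied through the operator bound \eqref{operator2} (where the high-frequency multiplier comparison $\|(1-\chi(D))f\|_{H^{1/2,-1/2}}\lesssim\|f\|_{L^2}$ is exactly your bounded-multiplier observation), and then handles the forcing term via Lemma \ref{L1L2striinter} plus the Christ--Kiselev truncation. Your identification of the $L^1L^2$ endpoint as the reason Theorem \ref{originalstri} cannot be applied directly to the inhomogeneous term also matches the paper's motivation for this structure.
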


\begin{proof}
Without loss of generality, let us assume $I = \Rm$. We start with the free linear propagation $u_L$ with a pair of arbitrary initial data $(u_0,u_1)$. In fact we have
\begin{equation}\label{freep}
 u_L (\cdot,t) = \cos (t D) u_0 + \frac{\sin (t D)}{D} u_1.
\end{equation}
Since $\tilde{D}^{1-\sigma} u_L$ solves the free linear shifted wave equation with initial data $(\tilde{D}^{1-\sigma} u_0, \tilde{D}^{1-\sigma} u_1)$, Theorem \ref{originalstri} immediately gives
\[
 \|\tilde{D}^{1-\sigma} u_L\|_{L^{p} L^{q}(\Rm \times \Hm^n)} \lesssim \|\tilde{D}^{1-\sigma} (u_0, u_1)\|_{H^{\sigma -\frac{1}{2},\frac{1}{2}}\times H^{\sigma -\frac{1}{2},-\frac{1}{2}}(\Hm^n)} = \|(u_0, u_1)\|_{H^{\frac{1}{2},\frac{1}{2}}\times H^{\frac{1}{2},-\frac{1}{2}}(\Hm^n)}.
\]
We rewrite this in the form of operators by the identity \eqref{freep} and obtain 
\begin{equation} \label{operator2}
 \|D^{-1} \tilde{D}^{1 -\sigma} e^{\pm i t D} f\|_{L^{p} L^{q}(\Rm \times \Hm^n)} \lesssim \|f\|_{H^{\frac{1}{2}, -\frac{1}{2}}(\Hm^n)}.
\end{equation}
Given an arbitrary $f \in L^2 (\Hm^n)$, the combination of (\ref{operator2}) and Lemma \ref{Striinter1} gives
\begin{align}
 &\|D^{-1} \tilde{D}^{1 -\sigma} e^{\pm i t D} f\|_{L^{p} L^{q}(\Rm \times \Hm^n)}\nonumber\\
  \leq & \|\chi(D) D^{-1} \tilde{D}^{1 -\sigma} e^{\pm i t D} f\|_{L^{p} L^{q}} + \|D^{-1} \tilde{D}^{1 -\sigma} e^{\pm i t D} (1 - \chi(D)) f\|_{L^{p} L^{q}}\nonumber\\
 \lesssim & \|f\|_{L^2(\Hm^n)} + \|(1 -\chi(D))f\|_{H^{\frac{1}{2}, -\frac{1}{2}}(\Hm^n)}\nonumber\\
 \lesssim & \|f\|_{L^2(\Hm^n)}. \label{leftestimate}
\end{align}
We combine this with Lemma \ref{L1L2striinter} and obtain
\begin{align*}
\left\|\int_{-\infty}^{\infty} D^{-1} \tilde{D}^{1 -\sigma} e^{\pm i (t-s) D} F(\cdot,s) \,ds \right\|_{L^{p} L^{q}(\Rm \times \Hm^n)} &\lesssim \|F\|_{L^1 L^2(\Rm \times \Hm^n)};\\
 \left\|\int_{-\infty}^{0} D^{-1} \tilde{D}^{1 -\sigma} e^{\pm i (t-s) D} F(\cdot,s) \, ds \right\|_{L^{p} L^{q}(\Rm \times \Hm^n)} &\lesssim \|F\|_{L^1 L^2(\Rm \times \Hm^n)}.
\end{align*}
According to Theorem 1.1 in \cite{truncation}, we also have a truncated version of the first inequality above
\[
 \left\|\int_{-\infty}^{t} D^{-1} \tilde{D}^{1 -\sigma} e^{\pm i (t-s) D} F(\cdot,s) \,ds \right\|_{L^{p} L^{q}(\Rm \times \Hm^n)} \lesssim \|F\|_{L^1 L^2(\Rm \times \Hm^n)}.
\]
Therefore we have 
\begin{equation} \label{Striinter2}
 \left\|\int_{0}^{t} D^{-1} \tilde{D}^{1 -\sigma} e^{\pm i (t-s) D} F(\cdot,s) \,ds \right\|_{L^{p} L^{q}(\Rm \times \Hm^n)} \lesssim \|F\|_{L^1 L^2(\Rm \times \Hm^n)}.
\end{equation}
By the identities 
\begin{align*}
 u(\cdot,t) & = \cos (t D) u_0 + \frac{\sin (t D)}{D} u_1 + \int_0^t \frac{\sin (t-s) D}{D} F(\cdot, s)\, ds;\\
 \partial_t u (\cdot,t) & = -D \sin (t D) u_0 + \cos (t D) u_1 + \int_0^t \left[\cos (t-s) D\right] F(\cdot, s)\, ds;
\end{align*}
we can combine the estimates (\ref{leftestimate}), (\ref{Striinter2}) and Lemma \ref{L1L2striinter} to finish the proof.
\end{proof}
\noindent If we choose $\sigma = \frac{n+1}{2} (\frac{1}{2} - \frac{1}{q})$ in the Theorem \ref{StriH01a} and apply the Sobolev embedding, we obtain another version of Strichartz estimates.
\begin{theorem} \label{StriH01b}
Assume $n \geq 3$. If $(p,q)$ satisfies
\begin{align*}
 &\frac{1}{p}, \frac{1}{q} \in \left(0,\frac{1}{2}\right);&  & \frac{1}{p} + \frac{n}{q} \geq \frac{n}{2} -1;&
\end{align*}
then there exists a constant $C$, such that the solution $u$ to the linear shifted wave equation ($0\in I$)
\[
\left\{\begin{array}{l} \partial_t^2 u - (\Delta_{\Hm^n}+\rho^2) u = F(x,t), \,\,\,\, (x,t)\in \Hm^n \times I;\\
u |_{t=0} = u_0; \\
\partial_t u |_{t=0} = u_1\end{array}\right.
\]
satisfies
\begin{align*}
 \|u\|_{L^{p} L^{q}(I \times \Hm^n)}&
  + \|(u, \partial_t u)\|_{C(I; H^{0,1}\times L^2(\Hm^n))} \\
 & \leq C \left( \|(u_0, u_1)\|_{H^{0,1}\times L^2(\Hm^n)} + \|F\|_{L^1 L^2 (I\times \Hm^n)} \right).
\end{align*}
\end{theorem}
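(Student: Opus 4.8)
The cleanest route is to read this theorem off Theorem \ref{StriH01a} rather than to reprove a Strichartz estimate from scratch. Note first that the energy-space term $\|(u,\partial_t u)\|_{C(I;H^{0,1}\times L^2(\Hm^n))}$ on the left and the whole right-hand side $\|(u_0,u_1)\|_{H^{0,1}\times L^2}+\|F\|_{L^1L^2}$ are literally the same in Theorem \ref{StriH01a} and in the present statement; they are therefore already supplied by Theorem \ref{StriH01a}. The only genuinely new quantity to control is the scalar space-time norm $\|u\|_{L^pL^q(I\times\Hm^n)}$. The plan is to apply Theorem \ref{StriH01a} at a suitable \emph{smaller} spatial exponent $q_1\le q$, where it yields control of $\|\tilde D^{1-\sigma}u\|_{L^pL^{q_1}}$ for the borderline value $\sigma=\frac{n+1}{2}(\frac12-\frac1{q_1})$, and then to convert the regularity carried by $\tilde D^{1-\sigma}$ into the extra spatial integrability needed to pass from $L^{q_1}$ to $L^{q}$.

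Concretely, I would fix an intermediate exponent $q_1\in(2,q]$ and set $\sigma=\frac{n+1}{2}(\frac12-\frac1{q_1})$. If $(p,q_1)$ satisfies the hypotheses of Theorem \ref{StriH01a}, that theorem gives $\|\tilde D^{1-\sigma}u\|_{L^pL^{q_1}}\lesssim\|(u_0,u_1)\|_{H^{0,1}\times L^2}+\|F\|_{L^1L^2}$. Since $q_1\le q$ and, as computed below, $(1-\sigma)-\frac{n}{q_1}\ge-\frac{n}{q}$, the Sobolev embedding $H^{1-\sigma}_{q_1}(\Hm^n)\hookrightarrow L^q(\Hm^n)$ holds; applying it at each fixed time and then taking the $L^p$ norm in $t$ yields $\|u\|_{L^pL^q}\lesssim\|\tilde D^{1-\sigma}u\|_{L^pL^{q_1}}$. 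Combining the three displayed pieces gives the asserted inequality.

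The only real obstacle is to produce an admissible $q_1$. Writing $\sigma=\frac{n+1}{2}(\frac12-\frac1{q_1})$, the admissibility inequality $\frac{2}{p}+\frac{n-1}{q_1}\ge\frac{n-1}{2}$ of Theorem \ref{StriH01a} reads $\frac{1}{q_1}\ge\frac12-\frac{2}{(n-1)p}$, while the embedding inequality $(1-\sigma)-\frac{n}{q_1}\ge-\frac{n}{q}$ reduces to $\frac{1}{q_1}\le\frac{2n}{(n-1)q}-\frac{n-3}{2(n-1)}$. I would then verify that these two constraints admit a common solution $q_1\in(2,q]$ exactly when the gap hypothesis $\frac1p+\frac nq\ge\frac n2-1$ holds: comparing the two bounds on $1/q_1$ and clearing denominators collapses their compatibility precisely to $\frac1p+\frac nq\ge\frac n2-1$, and correspondingly saturating both inequalities at once forces the identity $\frac1p+\frac nq=\frac n2-1$. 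It remains to confirm the endpoint bookkeeping, namely that the lower bound $\frac1q\le\frac{1}{q_1}$ forced by $q_1\le q$ is consistent with the upper bound and that $q_1$ may be kept strictly above $2$; both follow because the hypotheses force $\frac1q>\frac{n-3}{2n}\ge\frac{n-3}{2(n+1)}$, which in particular makes $1-\sigma>0$ so that the step from $L^{q_1}$ to $L^q$ is a genuine regularity-for-integrability trade. Once such a $q_1$ is selected, the estimate follows as described.
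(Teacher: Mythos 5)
Your proposal is correct and is exactly the paper's route: the paper proves this theorem in one line by taking $\sigma = \frac{n+1}{2}\left(\frac{1}{2}-\frac{1}{q_1}\right)$ in Theorem \ref{StriH01a} at an admissible intermediate pair and then applying the Sobolev embedding, which is precisely your argument. Your exponent bookkeeping (the two constraints on $1/q_1$ collapsing to $\frac{1}{p}+\frac{n}{q}\geq\frac{n}{2}-1$, and the bound $\frac{1}{q}>\frac{n-3}{2n}\geq\frac{n-3}{2(n+1)}$ ensuring a valid $q_1\in(2,q]$ with $1-\sigma>0$) checks out and supplies the details the paper leaves implicit.
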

Here we attach two figures, in which the grey regions illustrate all possible pairs $(p,q)$ that satisfy the conditions in Theorem \ref{StriH01b}, for two different cases: dimension $3$ (Figure \ref{hyper4d3}) and higher dimensions (Figure \ref{hyper4d4}).The lighter grey regions represent the pairs allowed in Theorem \ref{strichartzpre}, while the darker grey regions show new ``admissible'' pairs, which are obtained via the Sobolev embedding.
\begin{figure}[h]
  \centering
  \includegraphics{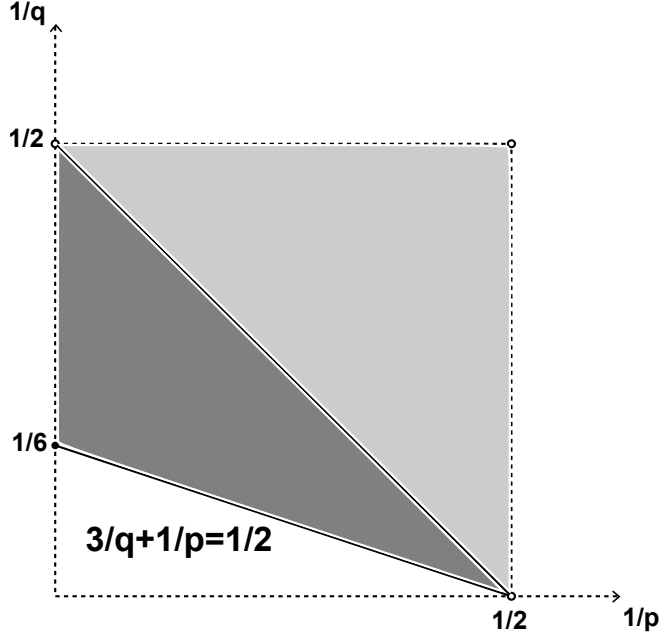}
  \caption{Admissible pairs $(p,q)$ in dimension 3}\label{hyper4d3}
\end{figure}
\begin{remark}
 Theorem \ref{StriH01b} also holds for the pair $(p,q) = (\infty, \frac{2n}{n-2})$ by the Sobolev embedding $H^{0,1}(\Hm^n) \hookrightarrow L^{2n/(n-2)}$ given in Proposition \ref{Sobolevem1}. Thus the pair $(\infty, \frac{2n}{n-2})$ is also marked as admissible in the figures.
\end{remark}
\begin{figure}[h]
  \centering
  \includegraphics{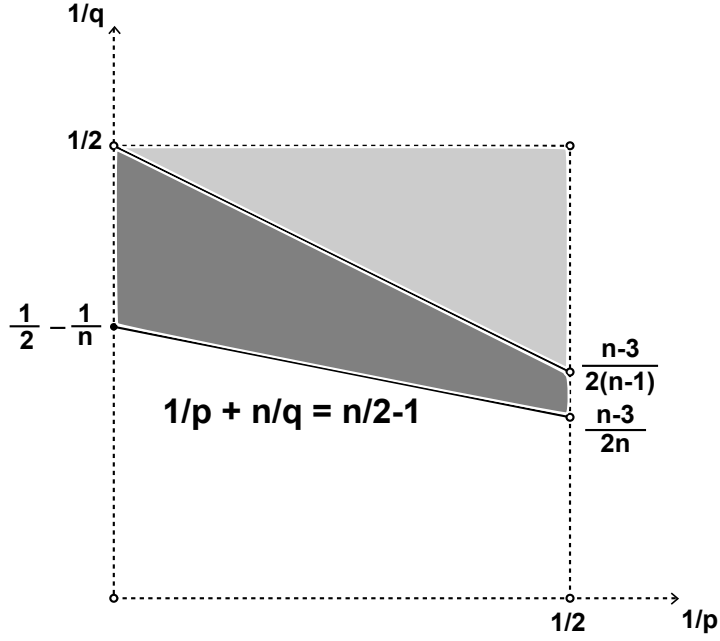}
  \caption{Admissible pairs $(p,q)$ in dimension 4 or higher}\label{hyper4d4}
\end{figure}
\section{Local Theory}

\begin{definition}
Assume $3 \leq n \leq 5$. We define the following space-time norm if $I$ is a time interval
\[
 \|u\|_{Y(I)} = \|u\|_{L^{\frac{n+2}{n-2}} L^{\frac{2(n+2)}{n-2}}(I \times \Hm^n)} = \|u\|_{L^{p_c} L^{2 p_c}(I \times \Hm^n)}.
\]
\end{definition}
\noindent Theorem \ref{StriH01b} claims that if $u$ is a solution to the linear equation $\partial_t^2 u - (\Delta_{\Hm^n} +\rho^2) u = F$ with initial data $(u_0,u_1)$, then we have
\[
 \|u\|_{Y(I)} + \|(u, \partial_t u)\|_{C(I;H^{0,1}\times L^2(\Hm^n))} \leq C \left(\|(u_0, u_1)\|_{H^{0,1}\times L^2(\Hm^n)} + \|F\|_{L^1 L^2 (I \times \Hm^n)}\right).
\]
Furthermore, a basic computation shows
\begin{align*}
 \|F(u)\|_{L^1 L^2 (I \times \Hm^n)} & \leq \|u\|_{Y(I)}^{p_c};\\
 \|F(u_1) - F(u_2)\|_{L^1 L^2 (I \times \Hm^n)} & \leq c_n \|u_1 -u_2\|_{Y(I)} \left(\|u_1\|_{Y(I)}^{p_c-1} + \|u_2\|_{Y(I)}^{p_c-1}\right).
\end{align*}
Combining these estimates with a fixed-point argument, we obtain the following local theory. (Our argument is standard, see for instance, \cite{bahouri, locad1, kenig, kenig1, ls, local1, ss2} for more details.)
\begin{definition}[Local solution] \label{localtheory}
Assume $3 \leq n \leq 5$. We say $u(t)$ is a solution of the equation (CP1) in a time interval $I$, if $(u(\cdot, t),\partial_t u(\cdot, t)) \in C(I;{H^{0, 1}}\times{L^2}(\Hm^n))$, with a finite norm $\|u\|_{Y(J)}$ for any bounded closed interval $J \subseteq I$ so that the integral equation
\[
 u(\cdot, t) = \mathbf{S}_{L,0} (t)(u_0,u_1) + \int_0^t \frac{\sin [(t-\tau)D]} {D} F(u(\cdot, \tau))\, d\tau
\]
holds for all time $t \in I$. 
\end{definition}
\begin{theorem} [Unique existence] 
For any initial data $(u_0,u_1) \in {H^{0, 1}}\times{L^2}(\Hm^n)$, there is a maximal interval $(-T_{-}(u_0,u_1), T_{+}(u_0,u_1))$ in which the equation (CP1) has a unique solution.
\end{theorem}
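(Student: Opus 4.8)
The plan is to construct the solution as a fixed point of the integral map
\[
 \Phi(u)(\cdot,t) = \mathbf{S}_{L,0}(t)(u_0,u_1) + \int_0^t \frac{\sin[(t-\tau)D]}{D} F(u(\cdot,\tau))\, d\tau
\]
on a complete metric space, then extend to a maximal interval by a continuation argument. First I would fix a time interval $I = (-T,T)$ containing $0$ and consider the closed ball
\[
 B_{R,\delta} = \left\{ u : (u,\partial_t u) \in C(I; H^{0,1}\times L^2), \ \|u\|_{Y(I)} \leq \delta, \ \|(u,\partial_t u)\|_{C(I;H^{0,1}\times L^2)} \leq R \right\},
\]
equipped with the metric $d(u,v) = \|u-v\|_{Y(I)}$. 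The two nonlinear estimates stated just above the theorem, namely $\|F(u)\|_{L^1 L^2(I\times\Hm^n)} \leq \|u\|_{Y(I)}^{p_c}$ and the difference estimate, are the essential inputs. Combined with Theorem \ref{StriH01b} applied to the Duhamel term, they give
\[
 \|\Phi(u)\|_{Y(I)} \leq C\|(u_0,u_1)\|_{H^{0,1}\times L^2} + C\|u\|_{Y(I)}^{p_c}, \qquad
 \|\Phi(u)-\Phi(v)\|_{Y(I)} \leq C\, d(u,v)\left(\|u\|_{Y(I)}^{p_c-1} + \|v\|_{Y(I)}^{p_c-1}\right).
\]

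Next I would arrange the smallness needed to close the contraction. The subtle point is that, unlike the Euclidean scaling-invariant setting, here there is no dilation symmetry to exploit, so I cannot make $\|(u_0,u_1)\|_{H^{0,1}\times L^2}$ small by rescaling. Instead I would make the free-evolution piece $\|\mathbf{S}_{L,0}(t)(u_0,u_1)\|_{Y(I)}$ small by shrinking the time interval $I$: since $\mathbf{S}_{L,0}(\cdot)(u_0,u_1) \in Y(\Rm)$ by Theorem \ref{StriH01b} applied with $F=0$, dominated convergence forces $\|\mathbf{S}_{L,0}(t)(u_0,u_1)\|_{Y(I)} \to 0$ as $|I| \to 0$. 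Choosing $T$ small enough that this quantity is below a threshold $\delta/2$, and then taking $\delta$ small relative to $C$, makes $\Phi$ map $B_{R,\delta}$ into itself and act as a strict contraction in the $Y$-metric. The Banach fixed-point theorem then yields a unique fixed point, which by Definition \ref{localtheory} is the desired local solution; the $C(I;H^{0,1}\times L^2)$ regularity of $u$ and $\partial_t u$ follows from the remaining parts of the Strichartz estimate applied to this fixed point.

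For the maximal interval, I would define $T_+(u_0,u_1)$ as the supremum of all $T$ for which a solution exists on $[0,T)$, and $T_-$ analogously, and verify that solutions on overlapping intervals agree by the uniqueness just established (a standard connectedness argument on the set of times where two solutions coincide, which is both open and closed in the common interval). Uniqueness on a fixed interval follows from the difference estimate: if $u,v$ are two solutions with the same data, then on a short enough subinterval $\|u-v\|_{Y} \leq C\|u-v\|_{Y}(\|u\|_Y^{p_c-1}+\|v\|_Y^{p_c-1})$ forces $u=v$ there, and the set of times where they agree is again open and closed. The main obstacle I anticipate is precisely the lack of scaling: the smallness required to run the contraction must come entirely from shrinking the time interval via the continuity-in-$I$ of the $Y$-norm of the free evolution, rather than from any homogeneity of the data, so care is needed to confirm that $\|\mathbf{S}_{L,0}(t)(u_0,u_1)\|_{Y(I)}$ genuinely tends to zero for arbitrary (not just small) energy-space data, which is exactly what the global-in-time membership $\mathbf{S}_{L,0}(\cdot)(u_0,u_1)\in Y(\Rm)$ from Theorem \ref{StriH01b} guarantees.
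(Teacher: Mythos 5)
Your proposal is correct and follows essentially the same route as the paper, which itself only sketches this step: the paper combines Theorem \ref{StriH01b} with the two displayed nonlinear estimates and invokes the ``standard fixed-point argument'' by citation, and your write-up is precisely that standard contraction argument, including the key observation that (absent any scaling symmetry) the smallness needed to close the contraction must come from shrinking the time interval via absolute continuity of the $Y(\Rm)$ norm of the free evolution. The only minor technical point to tidy is the completeness of your metric space $B_{R,\delta}$ under the $Y$-metric alone (one either drops the $C(I;H^{0,1}\times L^2)$ constraint and recovers that regularity from the fixed point afterwards, as you note, or invokes weak-$*$ lower semicontinuity), which is routine.
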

\begin{proposition} [Scattering with small data]
There exists a constant $\delta_1 > 0$ such that if $\|(u_0,u_1)\|_{{H^{0, 1}}\times{L^2}(\Hm^n)} < \delta_1$, then the Cauchy problem (CP1) has a solution $u$ defined for all $t \in \Rm$ with $\|u\|_{Y(\Rm)} \lesssim \|(u_0,u_1)\|_{{H^{0, 1}}\times{L^2}(\Hm^n)}$.
\end{proposition}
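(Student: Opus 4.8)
The plan is to construct the solution as the unique fixed point of the Duhamel map
\[
 \Phi(u)(\cdot,t) = \mathbf{S}_{L,0}(t)(u_0,u_1) + \int_0^t \frac{\sin[(t-\tau)D]}{D}\, F(u(\cdot,\tau))\, d\tau
\]
on a small closed ball of the Banach space $Y(\Rm)$. The decisive structural point is that the Strichartz estimate of Theorem \ref{StriH01b} holds on the whole line $I = \Rm$ with a constant independent of the interval; this lets me run the contraction in a single step over all of $\Rm$ instead of first on a short interval and then patching, so that smallness of the data yields global existence (and, ultimately, scattering) directly.

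First I would set $A = 2C\,\|(u_0,u_1)\|_{H^{0,1}\times L^2(\Hm^n)}$, where $C$ is the constant furnished by Theorem \ref{StriH01b}, and work in
\[
 X = \left\{ u : \|u\|_{Y(\Rm)} \leq A,\ (u,\partial_t u)\in C(\Rm; H^{0,1}\times L^2(\Hm^n)) \right\},
\]
equipped with the metric $d(u,v) = \|u-v\|_{Y(\Rm)}$; this is a complete metric space, being a closed ball in $Y(\Rm) = L^{p_c}L^{2p_c}$. To check that $\Phi$ maps $X$ into itself, I would apply Theorem \ref{StriH01b} to $\Phi(u)$, which solves the linear equation with data $(u_0,u_1)$ and forcing $F(u)$, together with the nonlinear bound $\|F(u)\|_{L^1 L^2}\leq \|u\|_{Y(\Rm)}^{p_c}$ recorded above, obtaining
\[
 \|\Phi(u)\|_{Y(\Rm)} \leq C\big(\|(u_0,u_1)\|_{H^{0,1}\times L^2} + A^{p_c}\big) \leq \tfrac{A}{2} + C A^{p_c}.
\]
Since $p_c>1$ and $A\lesssim \delta_1$, choosing $\delta_1$ small forces $C A^{p_c}\leq A/2$, so $\|\Phi(u)\|_{Y(\Rm)}\leq A$; the same estimate simultaneously controls $\|(\Phi(u),\partial_t\Phi(u))\|_{C(\Rm;H^{0,1}\times L^2)}$, keeping $\Phi(u)\in X$.

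Next I would establish the contraction. Using the difference estimate $\|F(u)-F(v)\|_{L^1 L^2}\leq c_n\|u-v\|_{Y(\Rm)}\big(\|u\|_{Y(\Rm)}^{p_c-1}+\|v\|_{Y(\Rm)}^{p_c-1}\big)$ together with Theorem \ref{StriH01b}, I get
\[
 d(\Phi(u),\Phi(v)) \leq 2C c_n A^{p_c-1}\, d(u,v),
\]
and for $\delta_1$ (hence $A$) small the factor $2Cc_nA^{p_c-1}<\tfrac{1}{2}$. The Banach fixed-point theorem then yields a unique $u\in X$ with $\Phi(u)=u$, which is by Definition \ref{localtheory} the solution of (CP1) on all of $\Rm$, and $\|u\|_{Y(\Rm)}\leq A\lesssim\|(u_0,u_1)\|_{H^{0,1}\times L^2}$ as asserted.

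The main thing requiring care, rather than a genuine obstacle, is to pin down a single $\delta_1$ making both the self-map condition $CA^{p_c}\leq A/2$ and the contraction condition $2Cc_nA^{p_c-1}<1/2$ hold at once; this is possible exactly because $p_c>1$, so the powers $A^{p_c}$ and $A^{p_c-1}$ are subdominant as $A\to 0$. Finally, although the stated conclusion is global existence with a finite $Y(\Rm)$ norm, the scattering promised by the title follows at once: since $\|u\|_{Y(\Rm)}<\infty$, the inhomogeneous Strichartz bound shows the Duhamel tails $\int_t^{\pm\infty}D^{-1}\sin[(t-\tau)D]F(u)\,d\tau$ tend to $0$ in $H^{0,1}\times L^2$, so $\mathbf{S}_L(-t)(u,\partial_t u)$ is Cauchy as $t\to\pm\infty$ and defines the scattering data $(u_0^\pm,u_1^\pm)$.
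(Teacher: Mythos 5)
Your proposal is correct and is essentially the paper's own argument: the paper obtains this proposition (together with the rest of its local theory) by combining the Strichartz estimate of Theorem \ref{StriH01b} with the two displayed nonlinear estimates $\|F(u)\|_{L^1L^2}\leq\|u\|_{Y}^{p_c}$ and the difference bound, via exactly this standard contraction-mapping scheme on a small ball of $Y(\Rm)$, which it leaves implicit with references. The only wrinkle worth noting is that your space $X$ (the $Y$-ball intersected with the condition $(u,\partial_t u)\in C(\Rm;H^{0,1}\times L^2(\Hm^n))$) is not obviously complete under the metric $\|u-v\|_{Y(\Rm)}$ alone; the standard remedy is to contract in the plain $Y$-ball and then recover the continuity of $(u,\partial_t u)$ a posteriori by applying Theorem \ref{StriH01b} to the fixed-point equation $u=\Phi(u)$.
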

\begin{proposition} [Standard finite time blow-up criterion] \label{standard blow up}
 If $T_{+} < \infty$, then $\|u\|_{Y([0,T_+))} = \infty$. Similarly if $T_-< \infty$, then $\|u\|_{Y((-T_-,0])} = \infty$. 
\end{proposition}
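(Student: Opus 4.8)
The plan is to argue by contradiction via the standard continuation scheme: assume $T_+ < \infty$ yet $M := \|u\|_{Y([0,T_+))} < \infty$, and derive that $u$ extends to a solution on a strictly larger interval, contradicting the maximality of $T_+$. Since the shifted wave operator and the nonlinearity are invariant under $t \mapsto -t$, applying the same argument to the time-reversed solution $t \mapsto u(\cdot, -t)$ yields the statement for $T_-$ at once, so I focus on $T_+$.

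First I would show that the energy-space trajectory $t \mapsto (u(\cdot, t), \partial_t u(\cdot, t))$ has a strong limit in $H^{0,1}\times L^2(\Hm^n)$ as $t \to T_+^-$. Because $\mathbf{S}_L(t)$ is a bounded group on $H^{0,1}\times L^2$ (the uniform-in-$I$ continuity bound furnished by Theorem \ref{StriH01b} with $F = 0$ renders both $\mathbf{S}_L(t)$ and its inverse $\mathbf{S}_L(-t)$ bounded on the energy space), the integral equation of Definition \ref{localtheory} can be rewritten, for $0 < t_1 < t_2 < T_+$, as
\[
\mathbf{S}_L(-t_2)(u(t_2),\partial_t u(t_2)) - \mathbf{S}_L(-t_1)(u(t_1),\partial_t u(t_1)) = \int_{t_1}^{t_2} \mathbf{S}_L(-\tau)(0, F(u(\cdot,\tau)))\, d\tau .
\]
Taking the $H^{0,1}\times L^2$ norm and using the boundedness of $\mathbf{S}_L(-\tau)$ together with the nonlinear estimate $\|F(u)\|_{L^1 L^2} \leq \|u\|_{Y}^{p_c}$ stated before Definition \ref{localtheory}, I bound the right-hand side by $\lesssim \|u\|_{Y([t_1,t_2])}^{p_c}$. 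Because $M < \infty$, the tails satisfy $\|u\|_{Y([t_1,T_+))} \to 0$ as $t_1 \to T_+^-$, so the curve $t \mapsto \mathbf{S}_L(-t)(u(t),\partial_t u(t))$ is Cauchy and converges to some $(v_0,v_1) \in H^{0,1}\times L^2$; applying the bounded operator $\mathbf{S}_L(T_+)$ then produces a genuine limit $(u_0^\ast, u_1^\ast) := \mathbf{S}_L(T_+)(v_0,v_1)$ for $(u(t),\partial_t u(t))$ as $t\to T_+^-$.

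Once the limiting data $(u_0^\ast, u_1^\ast)\in H^{0,1}\times L^2$ is in hand, I would invoke the local existence theory (the unique-existence theorem and its underlying fixed-point argument) to produce a solution $\tilde u$ of (CP1) on an interval $(T_+ - \eta, T_+ + \eta)$ carrying the data $(u_0^\ast, u_1^\ast)$ at $t = T_+$. On the overlap $(T_+-\eta, T_+)$ both $u$ and $\tilde u$ solve the same integral equation with the same energy-space data in the limit, so the uniqueness part of the local theory forces $\tilde u = u$ there; gluing $u$ and $\tilde u$ therefore yields a solution on $(-T_-, T_+ + \eta)$ with finite $Y$-norm on every bounded closed subinterval. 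This strictly extends $u$ past $T_+$, contradicting the definition of $T_+$ as the right endpoint of the maximal interval, and the proposition follows.

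The main obstacle is the first step — upgrading the mere boundedness of the energy-space trajectory to the existence of a strong limit. The delicate points are (i) justifying that $\mathbf{S}_L$ and its inverse are uniformly bounded on $H^{0,1}\times L^2$, rather than only on the homogeneous energy space where the $-\rho^2$ term can degenerate the quadratic form, and (ii) the absolute-continuity argument that $\|u\|_{Y([t_1,T_+))}\to 0$, which is what converts a finite but possibly large global $Y$-norm into an arbitrarily small forcing term near $T_+$. The remaining continuation and gluing are routine consequences of the local well-posedness already established.
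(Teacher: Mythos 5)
Your argument is essentially correct, but note that the paper itself offers no proof of this proposition: it is stated as part of the standard local-theory package that follows from Theorem \ref{StriH01b}, the nonlinear estimates preceding Definition \ref{localtheory}, and the fixed-point argument, with the details delegated to the cited literature. So there is no paper proof to diverge from; what you have written is a correct rendition of one of the two standard continuation schemes, and every estimate you invoke is indeed available in the paper. Two remarks. First, your ``delicate point'' (i) is in fact automatic: the square of the $H^{0,1}\times L^2$ norm is exactly twice the conserved linear energy $\EE_0$, so $\mathbf{S}_L(t)$ is an isometry (not merely bounded) on the energy space, and the Cauchy estimate for $t\mapsto \mathbf{S}_L(-t)(u(t),\partial_t u(t))$ goes through with constant $1$. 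Second, the one step you gloss over is the uniqueness/gluing on the overlap: the uniqueness statement of the local theory applies to two solutions sharing data at a point of their common existence interval, whereas your $u$ and $\tilde u$ share data only in the limit at the endpoint $T_+$, which $u$'s interval does not contain. The standard repair is to first extend $u$ to a solution on the closed interval $(-T_-,T_+]$: since $F(u)\in L^1L^2([0,T_+]\times\Hm^n)$ and $\sin((t-\tau)D)/D$ maps $L^2$ to $H^{0,1}$ uniformly, both sides of the integral equation in Definition \ref{localtheory} are continuous up to $t=T_+$, so the equation holds at $T_+$ with value $(u_0^\ast,u_1^\ast)$; then $u$ and $\tilde u$ genuinely share data at the common point $T_+$ and uniqueness plus the group property let you glue. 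Alternatively, the more economical route in critical problems avoids the limit entirely: choose $t_0$ near $T_+$ so that, by Duhamel and the vanishing $Y$-tail, $\|\mathbf{S}_{L,0}(t-t_0)(u(t_0),\partial_t u(t_0))\|_{Y([t_0,T_+ +\epsilon])}$ lies below the small-data threshold for some $\epsilon>0$ (using continuity of the linear flow's $Y$-norm in the length of the interval), and run the contraction mapping on $[t_0,T_+ +\epsilon]$ directly; uniqueness at $t_0$ then extends $u$ past $T_+$ without any endpoint bookkeeping.
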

\begin{proposition} [Finite $Y$ norm implies scattering] \label{Yscattering} 
Let $u$ be a solution to (CP1). If $\|u\|_{Y([0,T_+))} < \infty$, then $T_+ = \infty$ and there exists a pair $(u_0^+,u_1^+) \in {H^{0, 1}}\times{L^2}(\Hm^n)$, such that
\[
   \lim_{t \rightarrow + \infty} \left\|(u(\cdot,t), \partial_t u(\cdot,t)) - \mathbf{S}_L (t) (u_0^+, u_1^+)\right\|_{{H^{0, 1}}\times{L^2}(\Hm^n)} = 0.
\]
A similar result holds in the negative time direction as well. 
\end{proposition}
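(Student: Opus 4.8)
The plan is to reduce everything to the inhomogeneous endpoint estimate of Lemma \ref{L1L2striinter} together with the nonlinear bound $\|F(u)\|_{L^1 L^2(I\times\Hm^n)}\le\|u\|_{Y(I)}^{p_c}$ recorded just before Definition \ref{localtheory}. First I would dispose of the claim $T_+=\infty$: this is exactly the contrapositive of Proposition \ref{standard blow up}, since $\|u\|_{Y([0,T_+))}<\infty$ forbids $T_+<\infty$. Having $T_+=\infty$, the hypothesis reads $\|u\|_{Y([0,\infty))}<\infty$, so $\|F(u)\|_{L^1 L^2([0,\infty)\times\Hm^n)}\le\|u\|_{Y([0,\infty))}^{p_c}<\infty$. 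Moreover, because $s\mapsto\|u(\cdot,s)\|_{L^{2p_c}}^{p_c}$ is integrable on $[0,\infty)$, the tails satisfy $\|u\|_{Y([t,\infty))}\to 0$ and hence $\|F(u)\|_{L^1 L^2([t,\infty)\times\Hm^n)}\to 0$ as $t\to\infty$. This decaying tail is the quantitative engine of the whole argument.

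Next I would construct the scattering data. Writing $F=F(u(\cdot,\tau))$ and using the addition formulas $\sin[(t-\tau)D]=\sin(tD)\cos(\tau D)-\cos(tD)\sin(\tau D)$ and $\cos[(t-\tau)D]=\cos(tD)\cos(\tau D)+\sin(tD)\sin(\tau D)$, the Duhamel identities from Definition \ref{localtheory} rearrange into
\[
u(\cdot,t)=\cos(tD)\Big[u_0-\int_0^t\tfrac{\sin(\tau D)}{D}F\,d\tau\Big]+\tfrac{\sin(tD)}{D}\Big[u_1+\int_0^t\cos(\tau D)F\,d\tau\Big],
\]
with an analogous expression for $\partial_t u$. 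This suggests the definitions $u_0^+=u_0-\int_0^\infty\tfrac{\sin(\tau D)}{D}F\,d\tau$ and $u_1^+=u_1+\int_0^\infty\cos(\tau D)F\,d\tau$. To see that these lie in $H^{0,1}$ and $L^2$ respectively, I would verify the Cauchy criterion on the tails: applying $D$ turns $\tfrac{\sin(\tau D)}{D}$ into $\sin(\tau D)$, so Lemma \ref{L1L2striinter} (applied to both signs $e^{\pm i\tau D}$ and summed) gives $\big\|\int_a^b\tfrac{\sin(\tau D)}{D}F\,d\tau\big\|_{H^{0,1}}\le\|F\|_{L^1L^2([a,b]\times\Hm^n)}$ and $\big\|\int_a^b\cos(\tau D)F\,d\tau\big\|_{L^2}\le\|F\|_{L^1L^2([a,b]\times\Hm^n)}$, both of which tend to $0$ as $a,b\to\infty$. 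Thus the improper integrals converge and $(u_0^+,u_1^+)\in H^{0,1}\times L^2(\Hm^n)$.

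Finally I would establish the convergence. Subtracting the closed-form linear evolution $\mathbf{S}_L(t)(u_0^+,u_1^+)$ from $(u,\partial_t u)$, the terms telescope so that only the integrals from $t$ to $\infty$ survive:
\[
u(\cdot,t)-\big[\cos(tD)u_0^++\tfrac{\sin(tD)}{D}u_1^+\big]=\cos(tD)\int_t^\infty\tfrac{\sin(\tau D)}{D}F\,d\tau-\tfrac{\sin(tD)}{D}\int_t^\infty\cos(\tau D)F\,d\tau,
\]
and a parallel identity holds for the velocity component. Since $\cos(tD)$ and $\sin(tD)$ are bounded on $L^2(\Hm^n)$ and the factor $D$ again cancels the $D^{-1}$, the $H^{0,1}\times L^2$ norm of this difference is controlled by $\|F(u)\|_{L^1L^2([t,\infty)\times\Hm^n)}$, which tends to $0$ by the first step. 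The negative-time statement follows identically after the reflection $t\mapsto -t$. I expect no genuine obstacle here; the only point requiring care is the bookkeeping that the factor $D^{-1}$ in Duhamel's formula is precisely what upgrades the $L^2$ endpoint bound of Lemma \ref{L1L2striinter} to the energy-space norm $H^{0,1}$, so that the scattering data are measured in exactly this space rather than in the weaker $H^{1/2,1/2}\times H^{1/2,-1/2}$ used in \cite{subhyper}.
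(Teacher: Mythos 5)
Your proposal is correct and is essentially the standard argument the paper invokes (the paper omits the proof, deferring to the cited references): the contrapositive of Proposition \ref{standard blow up} for $T_+=\infty$, the Duhamel rearrangement via trigonometric addition formulas, Lemma \ref{L1L2striinter} applied to both signs $e^{\pm i\tau D}$ to get the Cauchy property of the tails, and the nonlinear bound $\|F(u)\|_{L^1L^2(I\times\Hm^n)}\le\|u\|_{Y(I)}^{p_c}$ with vanishing tail $\|u\|_{Y([t,\infty))}\to 0$. Your bookkeeping is also right on the one point that needs care: applying $D$ cancels the $D^{-1}$ in Duhamel's formula, so the differences are controlled in exactly the energy norm $H^{0,1}\times L^2(\Hm^n)$ by $\|F(u)\|_{L^1L^2([t,\infty)\times\Hm^n)}$.
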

\begin{theorem}  [Long-time perturbation theory] 
(See also \cite{kenig1, shen2}) Let $M$ be a positive constant. There exists a constant $\eps_0 = \eps_0 (M)>0$, such that if $\eps < \eps_0$, then for any approximation solution $\tilde{u}$ defined on $\Hm^n \times I$ ($0\in I$) and any initial data $(u_0,u_1) \in {H^{0, 1}}\times{L^2}(\Hm^n)$ satisfying
 \begin{align*}
           &\partial_t^2 \tilde{u} - (\Delta_{\Hm^n} + \rho^2) \tilde{u}=  F(\tilde{u}) + e(x,t), \qquad (x,t) \in \Hm^n \times I; \\
           &\|\tilde{u}\|_{Y(I)} < M; \qquad \|(\tilde{u}(\cdot, 0),\partial_t\tilde{u}(\cdot, 0))\|_{{H^{0, 1}}\times{L^2}(\Hm^n)}< \infty;\\
           &\|e(x,t)\|_{L^{1} L^{2}(I \times \Hm^n)}+ \|\mathbf{S}_{L,0} (t)(u_0-\tilde{u}(\cdot, 0),u_1 - \partial_t \tilde{u}(\cdot, 0))\|_{Y(I)} \leq \eps;
\end{align*}
there exists a solution $u(x,t)$ of (CP1) defined in the interval $I$ with the given initial data $(u_0,u_1)$ and satisfying
\[
    \|u(x,t) - \tilde{u}(x,t)\|_{Y(I)}  \leq C(M) \eps;
\]
\[
     \sup_{t \in I} \left\|\begin{pmatrix} u(\cdot, t)\\ \partial_t u(\cdot, t)\end{pmatrix} - \begin{pmatrix} \tilde{u}(\cdot, t)\\ \partial_t \tilde{u}(\cdot, t)\end{pmatrix}
            - \mathbf{S}_L (t) \begin{pmatrix} u_0 - \tilde{u}(\cdot, 0)\\ u_1 -\partial_t \tilde{u}(\cdot, 0)\end{pmatrix} \right\|_{{H^{0, 1}}\times{L^2}(\Hm^n)}\leq  C(M)\eps.
\]
\end{theorem}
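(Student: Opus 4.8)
The plan is to control the nonlinear part of the difference $w = u - \tilde u$. Write $v = \mathbf{S}_{L,0}(t)(w_0,w_1)$ for the free evolution of the data difference $(w_0,w_1) = (u_0 - \tilde u(\cdot,0), u_1 - \partial_t\tilde u(\cdot,0))$, so that the last hypothesis reads $\|v\|_{Y(I)} \le \eps$. Setting $r = w - v$, the function $r$ has zero initial data and solves the Duhamel equation
\[
 r(\cdot,t) = \int_0^t \frac{\sin[(t-s)D]}{D}\big[F(\tilde u + v + r) - F(\tilde u) - e(\cdot,s)\big]\,ds.
\]
Both conclusions then reduce to a single bound $\|r\|_{Y(I)} + \sup_{t\in I}\|(r,\partial_t r)\|_{H^{0,1}\times L^2(\Hm^n)} \le C(M)\eps$: indeed $u - \tilde u = v + r$ and $\mathbf{S}_L(t)(w_0,w_1) = (v,\partial_t v)$, so the first conclusion follows by adding $\|v\|_{Y(I)}\le\eps$, while the second conclusion is exactly the energy estimate on $r$. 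I stress that the hypotheses control the data difference only through the $Y$-norm of $v$ and never through its energy norm, which is precisely why the splitting $w = v + r$ is forced upon us.

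First I would partition $I$ into $m$ consecutive subintervals $I_1,\dots,I_m$ on which $\|\tilde u\|_{Y(I_j)} \le \eta_0$, for a small threshold $\eta_0$ fixed by the implied constants of Theorem \ref{StriH01b} and of the difference estimate for $F$. Because $\|\tilde u\|_{Y(I)} < M$ and the $Y$-norm is of $L^{p_c}$-type in time, the number $m$ is bounded in terms of $M$ and $\eta_0$ alone, which is the mechanism by which every constant below depends only on $M$. On a single $I_j$ I would restart the Duhamel formula at its endpoint nearest $0$, apply the Strichartz estimate of Theorem \ref{StriH01b}, and estimate the forcing by $\|F(\tilde u + v + r) - F(\tilde u)\|_{L^1 L^2(I_j)} \lesssim \|v+r\|_{Y(I_j)}\big(\|\tilde u + v + r\|_{Y(I_j)}^{p_c-1} + \|\tilde u\|_{Y(I_j)}^{p_c-1}\big)$. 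Using $\|\tilde u\|_{Y(I_j)}\le\eta_0$ and $\|v\|_{Y(I_j)}\le\eps\le\eta_0$ (we take $\eps_0\le\eta_0$), every resulting term is either a source of size $\lesssim\eps$, or linear in $\|r\|_{Y(I_j)}$ with the small coefficient $\eta_0^{p_c-1}$, or superlinear in $\|r\|_{Y(I_j)}$. Choosing $\eta_0$ small to absorb the linear term and running a continuity argument to absorb the superlinear terms yields, with $b_j := \|(r,\partial_t r)(t_{j-1})\|_{H^{0,1}\times L^2(\Hm^n)}$,
\[
 \|r\|_{Y(I_j)} \lesssim b_j + \|e\|_{L^1 L^2(I_j)} + \eps.
\]

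It remains to propagate the endpoint energy norm. The energy half of the same Strichartz estimate bounds $b_{j+1}$ by $b_j$ plus the same forcing, so feeding in the bound just obtained gives the linear recursion $b_{j+1} \le C_0 b_j + C_0\|e\|_{L^1 L^2(I_j)} + C_0\eps$ with $b_1 = 0$ (as $r$ has zero data at $0$). Summing it over the $m$ subintervals yields $\max_j b_j \le C_0^{\,m}\big(\|e\|_{L^1 L^2(I)} + m\eps\big) \le C(M)\eps$, and reinserting this into the per-interval bound and re-summing in $\ell^{p_c}$ produces $\|r\|_{Y(I)} \le C(M)\eps$ and $\sup_{t\in I}\|(r,\partial_t r)\|_{H^{0,1}\times L^2(\Hm^n)} \le C(M)\eps$. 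Since $m$ depends only on $M$ and $\eta_0$, so does $C(M) = C_0^m(1+m)$. Existence of $u$ on all of $I$ then follows from the local theory and the blow-up criterion of Proposition \ref{standard blow up}, the a priori bound on $\|u\|_{Y(I)}$ preventing a breakdown inside $I$.

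I expect the genuine obstacle to be the bookkeeping of this induction, namely maintaining the bootstrap hypothesis that each $\|r\|_{Y(I_j)}$ is small enough to absorb the superlinear terms while the geometric constant $C_0^{\,m}$ grows with the number of subintervals. One must fix $\eta_0$, then $m = m(M,\eta_0)$, and only afterwards choose $\eps_0 = \eps_0(M)$ small enough that $C(M)\eps_0$ stays inside the smallness window, running the continuity argument interval by interval so that the a priori smallness is never violated. The conceptual subtlety, as emphasized above, is that the hypotheses give us no control on the energy norm of $(w_0,w_1)$ itself, so the entire energy-norm accounting must be carried by $r$ through the recursion.
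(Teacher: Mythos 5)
Your proposal is correct, and it is essentially the standard long-time perturbation argument that the paper itself does not spell out but delegates to the cited references \cite{kenig1, shen2}: split off the free evolution $v$ of the data difference so that $r=w-v$ has zero data, partition $I$ into $m=m(M,\eta_0)$ subintervals where $\|\tilde u\|_{Y(I_j)}\leq\eta_0$, close a bootstrap on each subinterval via Theorem \ref{StriH01b} and the difference estimate for $F$, and propagate the endpoint energy norm through a linear recursion with constant $C_0^m$, choosing $\eps_0(M)$ last. Your handling of the order of quantifiers ($\eta_0$, then $m$, then $\eps_0$) and of the fact that the hypotheses control the data difference only through $\|v\|_{Y(I)}$ matches the intended argument, so there is nothing to correct.
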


\section{A Second Morawetz Inequality}

In my recent joint work with Staffilani \cite{subhyper}, we proved a Morawetz-type inequality 
\[
 \int_{-T_-}^{T_+} \int_{\Hm^n} |u|^{p+1} d\mu dt < \frac{4(p+1)}{p-1} \EE,
\]
if $u$ is a solution to the energy sub-critical, defocusing, semi-linear shifted wave equation $\partial_t^2 u - (\Delta_{\Hm^n}+\rho^2) u = - |u|^{p-1} u$ on $\Hm^n$. The main idea is to choose a suitable function $a$ and then apply the informal computation 
\begin{align*}
   & -\frac{d}{dt} \int_{\Hm^n} \partial_t u \cdot \left(\Db^\alpha a \Db_\alpha u + u \cdot \frac{\Delta a}{2} \right) d\mu \\
 = & \int_{\Hm^n} \left(\Db_\beta u \Db^\beta \Db^\alpha a \Db_\alpha u \right) d\mu
 - \frac{1}{4} \int_{\Hm^n} \left( |u|^2 \Delta \Delta a\right) d\mu
 + \frac{p-1}{2(p+1)} \int_{\Hm^n} \left( |u|^{p+1}\Delta a \right) d\mu
\end{align*}
on a solution $u$. In this section we prove a second and stronger Morawetz inequality by choosing a different function $a(r) = r$ and applying the same informal computation. The calculation turns out to be a little more complicated since the singularity of $r$ at the origin make it necessary to apply a smooth cut-off technique at this point. 

\begin{theorem} \label{Morawetz1A}
 Let $3 \leq n \leq 5$ and $(u_0,u_1)\in H^{0,1} \times L^2(\Hm^n)$ be initial data. Assume $u$ is the solution of (CP1) in the defocusing case with initial data $(u_0,u_1)$. Then the energy
\[
 \EE = \frac{1}{2}\|u(\cdot,t)\|_{H^{0,1}(\Hm^n)}^2 + \frac{1}{2}\|\partial_t u(\cdot,t)\|_{L^2(\Hm^n)}^2 + \frac{1}{p_c +1}\|u(\cdot,t)\|_{L^{p_c +1}(\Hm^n)}^{p_c +1}
\]
is a constant in the maximal lifespan $(-T_-,T_+)$. In addition, we have a Morawetz-type inequality
\[
 \int_{-T_-}^{T_+} \int_{\Hm^n} \frac{\rho (\cosh |x|) |u(x,t)|^{2n/(n-2)}}{\sinh |x|} d\mu(x) dt \leq n \EE.
\]
\end{theorem}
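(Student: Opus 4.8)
The plan is to implement a Morawetz multiplier argument with the multiplier attached to the distance function $a(r)=r$, exactly as the informal identity above suggests. First I would dispose of the energy conservation. Because the local theory only provides solutions in $C(I;H^{0,1}\times L^2(\Hm^n))$ with finite $Y$-norm on compact subintervals, I would prove that $\EE$ is constant by approximating $(u_0,u_1)$ with smooth, rapidly decaying data — for which $\frac{d}{dt}\EE=0$ follows by pairing the equation with $\partial_t u$ and integrating by parts — and then passing to the limit, using the continuity of the flow on $H^{0,1}\times L^2$ provided by the Strichartz estimate of Theorem~\ref{StriH01b} together with the nonlinear estimates of the local theory.

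Next, introduce the Morawetz action
\[
 M(t)=\int_{\Hm^n}\partial_t u\left(\Db^\alpha r\,\Db_\alpha u+\frac{\Delta r}{2}\,u\right)d\mu
 =\int_{\Hm^n}\partial_t u\left(\partial_r u+\rho\,\frac{\cosh r}{\sinh r}\,u\right)d\mu,
\]
where I used $\nabla r=\partial_r$ and the radial Laplacian formula $\Delta_{\Hm^n}a=a''+(n-1)\coth r\,a'$, giving $\Delta r=(n-1)\coth r=2\rho\coth r$. Feeding $a=r$ and $p=p_c$ (so that $\frac{p_c-1}{2(p_c+1)}=\frac1n$) into the informal computation turns the nonlinear term into $\frac{2\rho}{n}\int_{\Hm^n}\frac{\cosh r}{\sinh r}|u|^{2n/(n-2)}\,d\mu$, while the two geometric terms are nonnegative for $n\ge3$: the Hessian term equals $\int_{\Hm^n}\coth r\,(|\nabla u|^2-(\partial_r u)^2)\,d\mu\ge0$, since in $\Hm^n$ the Hessian of $r$ is $\coth r$ times the projection off the radial direction, and the bi-Laplacian term $-\tfrac14\int_{\Hm^n}|u|^2\,\Delta^2 r\,d\mu\ge0$, since a direct calculation gives $\Delta^2 r=(n-1)(3-n)\,\cosh r/\sinh^3 r\le0$.

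The decisive step is the uniform bound $|M(t)|\le\EE$. Cauchy--Schwarz and the arithmetic--geometric mean inequality give $|M(t)|\le\frac12\|\partial_t u\|_{L^2}^2+\frac12\|\partial_r u+\rho\coth r\,u\|_{L^2}^2$, so it suffices to prove $\|\partial_r u+\rho\coth r\,u\|_{L^2}^2\le\|u\|_{\dot H^{0,1}}^2$ at each time. Expanding the square and integrating the cross term by parts against $d\mu=(\sinh r)^{n-1}dr\,d\Theta$ (the boundary terms vanish because $(\sinh r)^{n-2}\to0$ at $r=0$ for $n\ge3$ and $u$ decays at infinity) yields
\[
 \left\|\partial_r u+\rho\coth r\,u\right\|_{L^2}^2
 =\|\partial_r u\|_{L^2}^2-\rho\|u\|_{L^2}^2+\frac{(n-1)(3-n)}{4}\int_{\Hm^n}\coth^2 r\,|u|^2\,d\mu.
\]
Using $\rho-\rho^2=\frac{(n-1)(3-n)}{4}$ and $\coth^2 r-1=1/\sinh^2 r$, the desired inequality is equivalent to $\frac{(n-1)(3-n)}{4}\int_{\Hm^n}\frac{|u|^2}{\sinh^2 r}\,d\mu\le\int_{\Hm^n}(|\nabla u|^2-(\partial_r u)^2)\,d\mu$, which holds for $n\ge3$ because the left side is nonpositive and the right side nonnegative. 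Since the defocusing potential energy is nonnegative, $\frac12\|\partial_t u\|_{L^2}^2+\frac12\|u\|_{\dot H^{0,1}}^2\le\EE$, and therefore $|M(t)|\le\EE$.

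To conclude, integrate the differentiation identity over $[t_1,t_2]\subset(-T_-,T_+)$ and discard the two nonnegative geometric terms:
\[
 \frac{2\rho}{n}\int_{t_1}^{t_2}\int_{\Hm^n}\frac{\cosh r}{\sinh r}|u|^{2n/(n-2)}\,d\mu\,dt\le M(t_1)-M(t_2)\le2\EE,
\]
so $\rho\int_{t_1}^{t_2}\int_{\Hm^n}\frac{\cosh r}{\sinh r}|u|^{2n/(n-2)}\,d\mu\,dt\le n\EE$; letting $t_1\downarrow-T_-$ and $t_2\uparrow T_+$ and invoking monotone convergence finishes the proof. \textbf{The main obstacle} is making the formal identity rigorous near the origin: the multiplier $a=r$ is only Lipschitz at $\mathbf0$ and $\coth r$ is singular there, so I would run the computation with a regularized multiplier $a_\eps$ (replacing $r$ by a smooth function inside a small ball, built from the cut-off $\chi$) acting on the smooth approximating solutions, verify that all error terms supported in $\{r\lesssim\eps\}$ tend to $0$ as $\eps\to0$, and only afterwards pass to the energy-space solution. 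This bookkeeping at $\mathbf0$, not the algebra above, is the delicate point.
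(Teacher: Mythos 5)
Your multiplier argument is, in all essentials, the paper's own proof: the same multiplier $a(r)=r$, the same geometric facts ($\Db^2 r\ge 0$, and $\Delta\Delta r = 4\rho(1-\rho)\cosh r/\sinh^3 r\le 0$, where $4\rho(1-\rho)=(n-1)(3-n)$ matches your formula), the same coefficient $\frac{p_c-1}{2(p_c+1)}=\frac1n$, and the same decisive bound $|M(t)|\le\EE$ obtained by expanding $\|\partial_r u+\rho\coth r\,u\|_{L^2}^2$, integrating the cross term by parts, and using that the resulting $(\rho-\rho^2)\int |u|^2/\sinh^2 r\,d\mu$ term has a favorable sign for $n\ge 3$; this is precisely the paper's estimate \eqref{estimateb1}. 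The endgame, $\frac{2\rho}{n}\int\!\!\int \coth r\,|u|^{p_c+1}\,d\mu\,dt\le M(t_1)-M(t_2)\le 2\EE$, is also identical.

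The one genuine gap is where your smooth objects come from. Both your energy-conservation step and your Morawetz computation are to be run on ``smooth approximating solutions'' arising from smooth, rapidly decaying data. For an energy-critical equation, the claim that smooth data produce solutions that remain smooth (with enough spatial decay) throughout their $H^{0,1}\times L^2$ lifespan is a persistence-of-regularity statement; it is not established anywhere in the paper, and it is not free --- it would require a separate higher-regularity local theory with Strichartz control on $\Hm^n$. The paper is engineered to avoid exactly this: it only upgrades the data to $H^1\times L^2$ (by perturbation theory), and then mollifies \emph{the solution itself}, as in \eqref{smoothdef}, producing a smooth $u_\eps$ that solves the \emph{linear} shifted wave equation with forcing $F_\eps$ (which is not $F(u_\eps)$); the entire argument is then arranged so that the only price paid is the commutator error $\|F_\eps-F(u_\eps)\|_{L^1L^2}$, which vanishes as $\eps\to 0$ by the $Y$-norm local theory (Lemma \ref{convergenceueps}). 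To close your proof you must either prove persistence of regularity for (CP1) or adopt this mollify-the-solution device. A second, smaller omission: your integrations by parts also need a cut-off at spatial infinity, not just at the origin (the paper's $\psi_\delta$ cuts off both at $r<\delta$ and at $r>1/\delta$), since finite-energy solutions need not decay pointwise and ``rapid decay'' of the data is not known to propagate; compactly supported data together with finite propagation speed would repair this, but it should be said.
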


\begin{remark}
Throughout this section we will only consider real-valued solutions for convenience. Complex-valued solutions can be handled in the same manner.
\end{remark}

\begin{remark}
It suffices to prove Theorem \ref{Morawetz1A} with an additional assumption $u_0 \in H^1(\Hm^n)$. This is a consequence of the standard approximation techniques.  Given any initial data $(u_0,u_1) \in \dot{H}^{0,1}\times L^2(\Hm^n)$, we can find a sequence $(u_{0,n}, u_{1,n}) \in H^1 \times L^2(\Hm^n)$, such that 
\[
 \|(u_{0,n},u_{1,n})-(u_0,u_1)\|_{\dot{H}^{0,1} \times L^2} \rightarrow 0 \; \Rightarrow \; \EE(u_{0,n}, u_{1,n}) \rightarrow \EE(u_0,u_1). 
\]
Let $u$ and $\{u_n\}_{n \in {\mathbb Z}^+}$ be the corresponding solutions to (CP1) with these initial data. According to the perturbation theory we have
\begin{align*}
 &\|u_n - u\|_{Y(J)} \rightarrow 0;& &\|(u(\cdot,t), \partial_t u(\cdot,t))-(u_n(\cdot,t), \partial_t u_n(\cdot,t))\|_{C(J;\dot{H}^{0,1}\times L^2)} \rightarrow 0&
\end{align*}
for any closed bounded interval $J=[-T_1,T_2]$ contained in the maximal lifespan of $u$. A limiting process $n \rightarrow \infty$ shows that the energy conservation law and the Morawetz inequality hold for $u$ as long as they hold for the solutions $\{u_n\}_{n \in {\mathbb Z}^+}$. 
\end{remark}

In the rest of the section we always assume that $u$ is a solution to (CP1) with initial data $(u_0,u_1) \in H^1 \times L^2 (\Hm^n)$.

\subsection{Preliminary Results}
\begin{lemma} \label{H1L2}
 We have $(u, \partial_t u) \in C((-T_-,T_+); H^1 \times L^2 (\Hm^n))$.
\end{lemma}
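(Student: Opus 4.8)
The plan is to establish persistence of regularity: the local theory already furnishes $(u,\partial_t u)\in C(I;H^{0,1}\times L^2(\Hm^n))$ on the maximal interval $I=(-T_-,T_+)$, together with the bound $\|F(u)\|_{L^1L^2(J\times\Hm^n)}\le\|u\|_{Y(J)}^{p_c}<\infty$ for every bounded closed $J\subseteq I$. Since $\partial_t u$ already lies in $C(I;L^2)$, the only thing missing is to promote $u$ from $C(I;H^{0,1})$ to $C(I;H^1)$. The key is the Plancherel identity together with the pointwise multiplier relation $\lambda^2+\rho^2+1=\lambda^2+(\rho^2+1)$, which yields the exact splitting $\|v\|_{H^1(\Hm^n)}^2=\|v\|_{H^{0,1}(\Hm^n)}^2+(\rho^2+1)\|v\|_{L^2(\Hm^n)}^2$. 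Hence $H^1=H^{0,1}\cap L^2$, and it suffices to show $u\in C(I;L^2(\Hm^n))$. This is precisely the low-frequency information (the range $\lambda$ near $0$) that the energy norm does not see, and it is exactly here that the extra hypothesis $u_0\in H^1$, i.e.\ $u_0\in L^2$, enters.

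First I would read off an $L^2$ bound from the Duhamel representation of Definition~\ref{localtheory},
\[
 u(\cdot,t)=\cos(tD)u_0+\frac{\sin(tD)}{D}u_1+\int_0^t\frac{\sin[(t-s)D]}{D}F(u(\cdot,s))\,ds.
\]
On $L^2(\Hm^n)$ the Fourier multiplier $\cos(t\lambda)$ has norm at most $1$, while $\sin(t\lambda)/\lambda$ is bounded by $|t|$ uniformly in $\lambda\ge0$ (with value $t$ at $\lambda=0$, so there is no singularity at low frequency). Consequently, for $t$ in a bounded closed interval $J\subseteq I$,
\[
 \|u(\cdot,t)\|_{L^2}\le\|u_0\|_{L^2}+|t|\,\|u_1\|_{L^2}+|J|\,\|F(u)\|_{L^1L^2(J\times\Hm^n)}<\infty,
\]
using $u_0,u_1\in L^2$ and the finiteness of $\|F(u)\|_{L^1L^2(J)}$. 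This gives $u\in L^\infty_{\mathrm{loc}}(I;L^2)$, which combined with $u\in C(I;H^{0,1})$ and the splitting above already yields $u\in L^\infty_{\mathrm{loc}}(I;H^1)$.

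Second, I would upgrade this to continuity in time. The maps $t\mapsto\cos(tD)u_0$ and $t\mapsto\frac{\sin(tD)}{D}u_1$ are strongly continuous $I\to L^2$ by the spectral theorem (dominated convergence on the Fourier side). For the Duhamel term $w(t)=\int_0^t\frac{\sin[(t-s)D]}{D}F(u(\cdot,s))\,ds$, I would write $w(t')-w(t)$ as the sum of the tail integral over $[t,t']$ and the integral of the operator difference over $[0,t]$: the former tends to $0$ by absolute continuity of the integral (since $F(u)\in L^1L^2(J)$ and the propagator norm is $\le|J|$), and the latter by dominated convergence, the integrand going to $0$ in $L^2$ pointwise in $s$ by strong continuity and being dominated by $2|J|\,\|F(u(\cdot,s))\|_{L^2}\in L^1(J)$. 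Hence $u\in C(I;L^2)$, so $u\in C(I;H^1)$ and $(u,\partial_t u)\in C(I;H^1\times L^2(\Hm^n))$.

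The argument is essentially routine once the reduction to $L^2$-control of $u$ is made; the only point requiring care is that, unlike the energy norm, this control must include the low-frequency range, which is why one cannot avoid the hypothesis $u_0\in L^2$ and must use the $L^2$-boundedness (rather than any decay) of $\cos(tD)$ and $\sin(tD)/D$. Since those bounds merely grow linearly in $t$, the conclusion is local in time, which is all that is needed on the open interval $(-T_-,T_+)$.
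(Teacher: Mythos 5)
Your proof is correct, and your opening reduction coincides with the paper's: since the local theory already gives $(u,\partial_t u)\in C((-T_-,T_+);H^{0,1}\times L^2(\Hm^n))$, everything comes down to showing $u\in C((-T_-,T_+);L^2(\Hm^n))$, which is exactly the low-frequency information supplied by the extra hypothesis $u_0\in H^1$ (your Plancherel splitting $\|v\|_{H^1}^2=\|v\|_{H^{0,1}}^2+(\rho^2+1)\|v\|_{L^2}^2$ is a nice explicit justification of this equivalence, which the paper leaves implicit). Where you genuinely diverge is in how the $L^2$ continuity is verified. You run it through the Duhamel representation, the multiplier bounds $|\cos(t\lambda)|\le 1$ and $|\sin(t\lambda)/\lambda|\le |t|$, the local $L^1L^2$ bound on $F(u)$, and a dominated-convergence argument for strong continuity of the propagators. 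The paper instead disposes of this step in one line: since $\partial_t u\in C((-T_-,T_+);L^2)$ and $u_0\in L^2$, one has $u(\cdot,t)=u_0+\int_0^t \partial_t u(\cdot,s)\,ds$ as an $L^2$-valued integral, so $u$ is (locally Lipschitz) continuous into $L^2$ --- no propagator estimates and no bound on the nonlinearity are needed, because the velocity component that the local theory already controls does all the work. Your route is heavier but self-contained at the level of the integral equation; the paper's observation is the cleaner way to see it, and is worth internalizing as the standard trick for upgrading shifted-Sobolev control to genuine $H^1$ control.
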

\begin{proof}
 We have already known $(u, \partial_t u) \in C((-T_-,T_+); H^{0,1} \times L^2 (\Hm^n))$ by Definition \ref{localtheory}. Therefore it is sufficient to show $u \in C((-T_-,T_+); L^2)$. This is clearly true since $\partial_t u \in C((-T_-,T_+); L^2)$ and $u_0 \in L^2$.
\end{proof}
\begin{lemma} \label{def of a}
  Assume $n \geq 3$. Let $(r,\Theta)$ be the polar coordinates on $\Hm^n$. Then the function $a(r) = r$ is smooth in $\Hm^n$ except for $r=0$ and satisfies
\begin{align*}
 &|\nabla a (r)| = 1;& &\Db^2 a \geq 0;&\\
 &\Delta_{\Hm^n} a = \frac{2\rho \cosh r}{\sinh r};& &\partial_r \Delta_{\Hm^n} a = \frac{-2\rho}{\sinh^2 r};&\\
 &\partial_{r}^2 \Delta_{\Hm^n} a = \frac{4 \rho \cosh r}{\sinh^3 r};& &\Delta_{\Hm^n} \Delta_{\Hm^n} a = \frac{4\rho (1-\rho) \cosh r}{\sinh^3 r} \leq 0. &
\end{align*}
\end{lemma}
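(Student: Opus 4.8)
The plan is to work entirely in the geodesic polar coordinates $(r,\Theta)$, in which the metric on $\Hm^n$ takes the warped-product form $dr^2 + (\sinh r)^2\, d\Theta^2$, with $d\Theta^2$ the round metric on ${\mathbb S}^{n-1}$ and volume element $d\mu = (\sinh r)^{n-1}\, dr\, d\Theta$. The smoothness claim is then immediate: since $\Hm^n$ is a Cartan--Hadamard manifold with no conjugate points and empty cut locus, the distance function $a(r) = r$ is smooth away from the single point $\mathbf{0}$, where it carries the usual conical singularity of a radial coordinate. The gradient identity is equally quick, since $\partial_r$ is a unit field and $a$ depends only on $r$, so that $|\nabla a| = |\partial_r a| = a'(r) = 1$.

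For the four Laplacian identities I would first record the standard expression for the Laplace--Beltrami operator acting on a radial function $g(r)$, which follows from the divergence form together with the volume element above:
\[
 \Delta_{\Hm^n} g = g''(r) + (n-1)\frac{\cosh r}{\sinh r}\, g'(r).
\]
Feeding in $a(r) = r$ (so $a' = 1$, $a'' = 0$) and using $n-1 = 2\rho$ gives $\Delta_{\Hm^n} a = 2\rho(\cosh r)/(\sinh r)$ at once. The identities for $\partial_r \Delta_{\Hm^n} a$ and $\partial_r^2 \Delta_{\Hm^n} a$ then amount to differentiating $2\rho\coth r$ once and twice in $r$, while the bi-Laplacian $\Delta_{\Hm^n}\Delta_{\Hm^n} a$ is obtained by substituting the radial function $g = \Delta_{\Hm^n} a = 2\rho\coth r$ back into the same formula; collecting the two resulting terms produces the factor $4\rho(1-\rho)(\cosh r)/(\sinh^3 r)$. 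Since $\rho = (n-1)/2 \geq 1$ when $n\geq 3$, one has $1-\rho \leq 0$, which gives the stated sign $\Delta_{\Hm^n}\Delta_{\Hm^n} a \leq 0$.

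The only assertion requiring genuine geometric input rather than one-variable calculus is the Hessian bound $\Db^2 a \geq 0$, and I expect this to be the main (indeed essentially the only) obstacle. Here I would compute $\Db^2 a$ from the Christoffel symbols of the warped product: for a radial function $(\Db^2 a)_{ij} = \partial_i\partial_j a - \Gamma^k_{ij}\partial_k a$ reduces to $-\Gamma^r_{ij}\, a'$ away from the origin. The radial-radial component vanishes, because $\Gamma^r_{rr} = 0$ forces $(\Db^2 a)_{rr} = a'' = 0$, and the mixed components vanish as well; in the angular block, $\Gamma^r_{\theta_i\theta_j} = -(\sinh r)(\cosh r)\,\hat g_{ij}$ yields $(\Db^2 a)_{\theta_i\theta_j} = (\sinh r)(\cosh r)\,\hat g_{ij}$, where $\hat g$ denotes the metric on ${\mathbb S}^{n-1}$. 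Equivalently, this is the statement that the second fundamental form of the geodesic sphere of radius $r$ equals $\coth r$ times its induced metric, which is positive for $r > 0$. Hence $\Db^2 a$ is positive semi-definite with a single null direction (the radial one), completing the proof.
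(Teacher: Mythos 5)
Your proposal is correct and follows essentially the same route as the paper, which itself omits the details and simply points to the explicit calculation based on the radial Laplacian formula $\Delta_{\Hm^n} = \partial_r^2 + 2\rho\coth r\,\partial_r + (\sinh r)^{-2}\Delta_{{\mathbb S}^{n-1}}$ (identical to yours, since $n-1 = 2\rho$) together with the negative-curvature fact for $\Db^2 a \geq 0$. The only cosmetic difference is that where the paper cites the convexity of the distance function on a negatively curved space as well known, you verify it directly from the Christoffel symbols of the warped-product metric, which is a perfectly valid way of filling in that step.
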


The proof follows an explicit calculation, thus we omit the details. In fact, the inequality $\Db^2 a \geq 0$ is a well-known consequence of the fact that the hyperbolic space $\Hm^n$ has a negative constant sectional curvature. The following formula also helps in the calculation regarding the Laplace operator $\Delta_{\Hm^n}$
\[
 \Delta_{\Hm^n} = \frac{\partial^2}{\partial r^2} + \frac{2\rho \cosh r}{\sinh r}\cdot \frac{\partial}{\partial r} + \frac{1}{\sinh^2 r} \cdot \Delta_{{\mathbb S}^{n-1}}.
\]

\begin{definition} \label{smoothcutoff}
 Let $\psi: [0,\infty) \rightarrow [0,1]$ be a non-increasing smooth cut-off function satisfying
\[
 \psi(r) = \left\{\begin{array}{ll}
  1, & r < 1;\\
  0, & r > 2.
 \end{array}\right.
\]
If $\delta \in (0,1/10]$, we define a radial cut-off function $\psi_{\delta}(r) = \psi (\delta r) (1 - \psi(r/\delta))$ on $\Hm^n$. It is clear that
\[
 |\nabla \psi_\delta (r,\Theta)| \lesssim \left\{\begin{array}{ll}\delta, & 1/\delta < r < 2/\delta;\\
 1/\delta, & \delta < r< 2\delta;\\
 0, & \hbox{otherwise}. \end{array}\right.
\]
\end{definition}
\begin{lemma}\label{smoothing} (See Lemma 4.11 in \cite{subhyper}, also \cite{staten} for more general case) Let $\mathbf{\tilde{P}}_\eps$ be the smoothing operator defined by the Fourier multiplier $\lambda \rightarrow e^{-\eps^2 \lambda^2}$. Given any $2 \leq q < \infty$, we have $\|\mathbf{\tilde{P}}_\eps\|_{L^q(\Hm^n) \rightarrow L^q(\Hm^n)} \leq C_q < \infty$ for any $\eps \in \left(0,\frac{1}{10}\right)$. Furthermore, if $v \in L^q (\Hm^n)$, then $\|v - \mathbf{\tilde{P}}_\eps v\|_{L^q} \rightarrow 0$ as $\eps \rightarrow 0$.
\end{lemma}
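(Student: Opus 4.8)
The plan is to recognize $\mathbf{\tilde{P}}_\eps$ as the heat semigroup on $\Hm^n$ up to a harmless scalar factor, and then to exploit the classical $L^q$-contractivity and strong continuity of that semigroup. Since $D^2 = -\Delta_{\Hm^n} - \rho^2$ carries the Fourier symbol $\lambda^2$, the multiplier $e^{-\eps^2\lambda^2}$ is exactly the symbol of $e^{-\eps^2 D^2}$, and because $e^{\eps^2\Delta_{\Hm^n}}$ carries the symbol $e^{-\eps^2(\lambda^2+\rho^2)}$, a direct manipulation of symbols gives
\[
 \mathbf{\tilde{P}}_\eps = e^{-\eps^2 D^2} = e^{\eps^2\rho^2}\, e^{\eps^2\Delta_{\Hm^n}}.
\]
Thus $\mathbf{\tilde{P}}_\eps$ equals the scalar $e^{\eps^2\rho^2}$ times the heat propagator $e^{t\Delta_{\Hm^n}}$ at time $t=\eps^2$, i.e. convolution with the radial, nonnegative heat kernel $p_{\eps^2}$.

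For the uniform bound I would use that $\Hm^n$ is stochastically complete, so $p_t\geq 0$ and $\int_{\Hm^n} p_t\, d\mu = 1$. Consequently $e^{t\Delta_{\Hm^n}}$ is a contraction on $L^1(\Hm^n)$ and on $L^\infty(\Hm^n)$, and by Riesz--Thorin interpolation a contraction on every $L^q(\Hm^n)$ with $1\le q\le\infty$. Equivalently, this is Young's inequality on the unimodular group $SO(1,n)$ applied to the $L^1$-kernel $p_{\eps^2}$, whose $L^1$-norm is $1$. Since $\eps\in(0,1/10)$ forces $e^{\eps^2\rho^2}\le e^{\rho^2/100}$, I obtain
\[
 \|\mathbf{\tilde{P}}_\eps\|_{L^q\to L^q}\le e^{\eps^2\rho^2}\le e^{\rho^2/100}=:C_q,
\]
uniformly in $\eps$; in fact the bound is independent of $q$ and holds for all $1\le q<\infty$.

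For the approximation claim I would split
\[
 \|v-\mathbf{\tilde{P}}_\eps v\|_{L^q}\le \|v-e^{\eps^2\Delta_{\Hm^n}}v\|_{L^q}+\bigl(e^{\eps^2\rho^2}-1\bigr)\,\|e^{\eps^2\Delta_{\Hm^n}}v\|_{L^q}.
\]
The second summand is bounded by $(e^{\eps^2\rho^2}-1)\|v\|_{L^q}\to 0$. The first summand tends to $0$ by strong continuity of the heat semigroup at $t=0$ on $L^q$ for $1\le q<\infty$: I first verify $\|e^{t\Delta_{\Hm^n}}w-w\|_{L^q}\to 0$ for $w\in C_0^\infty(\Hm^n)$ using the approximate-identity property of $p_t$ together with mass conservation, and then upgrade to arbitrary $v\in L^q$ by a standard density argument that invokes the uniform contraction bound established above.

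The only genuine input beyond soft functional-analytic bookkeeping is the mass conservation $\int_{\Hm^n}p_t\,d\mu=1$ (stochastic completeness of $\Hm^n$), which is what upgrades the trivial $L^2$ bound to a uniform $L^q$ bound for $q>2$. This is where the non-amenability of $SO(1,n)$ would otherwise be an obstruction, and it is precisely why a bare application of the Kunze--Stein estimate of Lemma~\ref{ksphenomenon} (which yields only $L^{\tilde q'}\to L^q$ with $\tilde q\ge 2$, hence a source exponent $\le 2$) does not directly deliver the $L^q\to L^q$ bound here. I expect the reformulation as a heat semigroup to be the crux; once it is in place, the contractivity and approximation estimates are routine.
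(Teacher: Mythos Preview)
Your argument is correct. The identification $\mathbf{\tilde P}_\eps = e^{\eps^2\rho^2}\,e^{\eps^2\Delta_{\Hm^n}}$ is exactly right (the symbol of $-\Delta_{\Hm^n}$ is $\lambda^2+\rho^2$), and once you are in the heat-semigroup setting, stochastic completeness of $\Hm^n$ gives $\|p_t\|_{L^1}=1$, hence $L^q$-contractivity of $e^{t\Delta_{\Hm^n}}$ for all $1\le q\le\infty$ by interpolation (or Young on the group). The strong continuity at $t=0$ then follows by the standard density argument you outline. Your closing remark about why Lemma~\ref{ksphenomenon} alone is insufficient (it only maps $L^{\tilde q'}\to L^q$ with $\tilde q'\le 2$) is a nice observation.

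As for comparison: the paper does not give its own proof but simply cites Lemma~4.11 of \cite{subhyper} and, for the general symmetric-space setting, Stanton--Tomas \cite{staten}. The latter reference points toward a more hands-on route, estimating the convolution kernel via asymptotic expansions of the spherical functions $\Phi_\lambda$ and the Harish-Chandra $\mathbf c$-function. Your approach bypasses all of that kernel analysis by recognizing the operator as (a scalar multiple of) the heat semigroup and invoking only soft properties: positivity, mass conservation, and interpolation. This is more elementary and conceptually cleaner for the specific multiplier $e^{-\eps^2\lambda^2}$; the Stanton--Tomas machinery, by contrast, applies to a much broader class of multipliers but is correspondingly heavier. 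For the purposes of this lemma your argument is entirely adequate and arguably preferable.
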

\paragraph{Space-time smoothing operator} Let $\phi(t)$ be a smooth, nonnegative, even function compactly supported in $[-1,1]$ with $\int_{-1}^{1} \phi (t) dt =1$. Given a closed interval $[-T_1,T_2] \subset (-T_-,T_+)$, we define $u_\eps$ and $F_\eps$ as the smooth version of $u$ and $F$ if $\eps < \eps_0 = (1/2)\min\{1/10, T_+ - T_2, T_{-} -T_1\}$.
\begin{equation}
 u_\eps (t) = \int_{-1}^{+1} \phi(s) \tilde{P}_\eps u(\cdot,t+s\eps) ds;\quad
 F_\eps (t) = \int_{-1}^{+1} \phi(s) \tilde{P}_\eps F(u(\cdot,t+s\eps)) ds. \label{smoothdef}
\end{equation}
The function $u_\eps$ is a smooth solution to the shifted wave equation
\begin{equation} \label{app linear}
 \partial_t^2 u_\eps - (\Delta_{\Hm^n}+ \rho^2) u_\eps = F_{\eps}
\end{equation}
in the time interval $[-T_1,T_2]$. Combining the fact $\|u\|_{Y([-T_1-\eps_0, T_2 + \eps_0])}<\infty$, the inequality
\begin{align*}
 \|F(u_\eps) - F_\eps\|_{L^1 L^2(I)} \leq & \|F(u_\eps) - F(u)\|_{L^1 L^2(I)} + \|F(u) - F_\eps\|_{L^1 L^2(I)}\\
 \leq & C \|u_\eps - u\|_{Y(I)} \left(\|u_\eps\|_{Y(I)}^{p_c-1} + \|u\|_{Y(I)}^{p_c -1}\right) + \|F(u) - F_\eps\|_{L^1 L^2(I)},
\end{align*}
with Lemma \ref{H1L2} and Lemma \ref{smoothing}, we immediately have 

\begin{lemma} \label{convergenceueps} Let $t_0$ be an arbitrary time in $[-T_1,T_2]$. The functions $u_\eps$ and $F_\eps$ satisfy
\begin{align*}
  &\lim_{\eps \rightarrow 0} \|F(u_\eps) - F_\eps \|_{L^1 L^2 ([-T_1,T_2]\times \Hm^n)} = 0;\\
  &\lim_{\eps \rightarrow 0} \|(u_\eps (\cdot, t_0), \partial_t u_\eps (\cdot, t_0))- (u (\cdot, t_0),\partial_t u(\cdot, t_0))\|_{H^1 \times L^2 (\Hm^n)} =0;\\
  &\lim_{\eps \rightarrow 0} \|u_\eps (\cdot, t_0)- u (\cdot, t_0)\|_{L^{p_c+1} (\Hm^n)} =0;\\
  &M_1 := \sup_{\eps < \eps_0} \|(u_\eps, \partial_t u_\eps)\|_{C([-T_1,T_2]; H^1\times L^2 (\Hm^n))} < \infty.
\end{align*}
The third line is a combination of the Sobolev embedding $H^1 \hookrightarrow L^{p_c+1}$ and the second line.
\end{lemma}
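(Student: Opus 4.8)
The plan is to derive all four assertions from two structural facts about the construction \eqref{smoothdef}. First, $\tilde{P}_\eps$ is a radial Fourier multiplier, so it commutes with $D$ and $\tilde{D}$; combined with the $L^q$ boundedness and the strong convergence $\|v-\tilde{P}_\eps v\|_{L^q}\to0$ of Lemma~\ref{smoothing}, this lets $\tilde{P}_\eps$ act boundedly on the relevant Sobolev spaces and converge strongly there as well. Second, both $u_\eps$ and $F_\eps$ are a \emph{double} mollification — the spatial smoothing $\tilde{P}_\eps$ composed with a time average against $\phi$. Accordingly I would split every difference $w_\eps-w$ (with $w=u$, $\partial_t u$, or $F(u)$) as
\[
 w_\eps(\cdot,t)-w(\cdot,t) = \tilde{P}_\eps\!\int_{-1}^{1}\phi(s)\bigl[w(\cdot,t+s\eps)-w(\cdot,t)\bigr]\,ds + \bigl[\tilde{P}_\eps w(\cdot,t)-w(\cdot,t)\bigr],
\]
controlling the first term by continuity of time translation and the second by the convergence part of Lemma~\ref{smoothing}.

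For the second line I would fix $t_0$ and apply this splitting with $w=u$ in the $H^1$ norm. Commuting $\tilde{P}_\eps$ with $\tilde{D}$ turns the spatial-smoothing term into $\|\tilde{P}_\eps\tilde{D}u(\cdot,t_0)-\tilde{D}u(\cdot,t_0)\|_{L^2}\to0$, while the time-averaging term is bounded by $\sup_{|s|\le1}\|u(\cdot,t_0+s\eps)-u(\cdot,t_0)\|_{H^1}\to0$ by the uniform continuity of $t\mapsto u(\cdot,t)$ on the compact interval (Lemma~\ref{H1L2}). Differentiating \eqref{smoothdef} under the integral sign gives $\partial_t u_\eps(t)=\int_{-1}^{1}\phi(s)\tilde{P}_\eps(\partial_t u)(\cdot,t+s\eps)\,ds$, so the same argument with $\partial_t u\in C([-T_1,T_2];L^2)$ yields the $L^2$ convergence of the velocities. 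The third line is then immediate from $H^1\hookrightarrow L^{p_c+1}$ as indicated in the statement. For the fourth line I would invoke Minkowski's integral inequality and the uniform bound $\|\tilde{P}_\eps\|_{H^1\to H^1}\le C$ (obtained from the $L^2$ bound by commutation) to get, for all $t$ and all $\eps<\eps_0$,
\[
 \|u_\eps(\cdot,t)\|_{H^1}\le \int_{-1}^1\phi(s)\,\|\tilde{P}_\eps u(\cdot,t+s\eps)\|_{H^1}\,ds \le C\sup_{\tau\in[-T_1-\eps_0,\,T_2+\eps_0]}\|u(\cdot,\tau)\|_{H^1},
\]
which is finite by Lemma~\ref{H1L2}; the analogous bound for $\partial_t u_\eps$ in $L^2$ then gives $M_1<\infty$.

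For the first line I would start from the inequality displayed just above the lemma, which reduces matters to showing $\|u_\eps-u\|_{Y(I)}\to0$ and $\|F(u)-F_\eps\|_{L^1L^2(I)}\to0$, together with a uniform bound on $\|u_\eps\|_{Y(I)}$. The uniform $Y$ bound again follows from Minkowski's inequality, the $L^{2p_c}$ boundedness of $\tilde{P}_\eps$, and the finiteness of $\|u\|_{Y([-T_1-\eps_0,T_2+\eps_0])}$. For the two convergences I would apply the splitting above with $w=u$ measured in the $Y=L^{p_c}L^{2p_c}$ norm and with $w=F(u)$ measured in the $L^1L^2$ norm. In each case the spatial-smoothing term is handled by dominated convergence: $\tilde{P}_\eps w(\cdot,t)\to w(\cdot,t)$ in the relevant $L^q_x$ norm for almost every $t$, while the time-integrand is dominated by a fixed multiple of $\|w(\cdot,t)\|_{L^q_x}$ raised to the outer exponent, which is integrable in $t$ since $u\in Y(I)$ and $F(u)\in L^1L^2(I)$. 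The time-averaging term is handled by the continuity of the translation $h\mapsto w(\cdot,\cdot+h)$ in the Bochner spaces $L^{p_c}_tL^{2p_c}_x$ and $L^1_tL^2_x$, which forces $\sup_{|s|\le1}\|w(\cdot,\cdot+s\eps)-w\|\to0$.

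The main obstacle I anticipate is not any single estimate but the bookkeeping required to make these mollification limits \emph{uniform} while keeping every time argument inside a fixed interval on which the $Y$ norm and the $H^1\times L^2$ continuity are controlled — precisely the reason for enlarging $[-T_1,T_2]$ to $[-T_1-\eps_0,T_2+\eps_0]$ in the choice of $\eps_0$. The genuinely analytic content is concentrated in the $Y$-norm convergence $\|u_\eps-u\|_{Y(I)}\to0$, where one cannot argue by pointwise-in-time convergence alone but must combine a dominated-convergence argument in space-time for the smoothing $\tilde{P}_\eps$ with the continuity of translation in a Banach-valued Lebesgue space for the time average. Once these two standard but slightly delicate facts are in place, all four statements follow.
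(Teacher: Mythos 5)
Your proposal is correct and takes essentially the same approach as the paper: the paper's own proof consists precisely of the triangle/Lipschitz inequality displayed just before the lemma, the finiteness of $\|u\|_{Y([-T_1-\eps_0,T_2+\eps_0])}$, and appeals to Lemmas \ref{H1L2} and \ref{smoothing}, treating as immediate the standard mollification facts you spell out (commutation of $\tilde{P}_\eps$ with $\tilde{D}$, Minkowski's inequality, dominated convergence in space-time, and continuity of translation in Bochner spaces). The only difference is the level of detail, not the route.
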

\subsection{Energy Conservation Law}
\begin{proposition} [Energy Conservation Law] The energy
\[
 \EE(t) = \frac{1}{2}\|u(\cdot,t)\|_{H^{0,1}(\Hm^n)}^2 + \frac{1}{2}\|\partial_t u(\cdot,t)\|_{L^2(\Hm^n)}^2 + \frac{1}{p_c +1}\|u(\cdot,t)\|_{L^{p_c +1}(\Hm^n)}^{p_c +1}
\]
is a (finite) constant independent of $t \in (-T_-, T_+)$.
\end{proposition}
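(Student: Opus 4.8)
The plan is to establish the conservation law by the classical multiplier argument, in which one differentiates $\EE(t)$ in time, pairs the equation against $\partial_t u$, and integrates by parts. Formally, writing $F(u) = -|u|^{p_c-1}u$ in the defocusing case, one finds
\[
 \frac{d}{dt}\EE(t) = \int_{\Hm^n} \partial_t u \left[\partial_t^2 u - (\Delta_{\Hm^n}+\rho^2) u - F(u)\right] d\mu = 0,
\]
because the bracket vanishes on a solution and because integration by parts converts $\int_{\Hm^n} (\nabla u \cdot \nabla \partial_t u - \rho^2 u\,\partial_t u)\,d\mu$ into $-\int_{\Hm^n} (\Delta_{\Hm^n}+\rho^2) u\,\partial_t u\,d\mu$. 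The difficulty is that a solution $u$ at the critical regularity $H^{0,1}\times L^2$ is not smooth enough for any of these manipulations to be literally valid, so I would carry them out on the smooth approximations $u_\eps$ defined in \eqref{smoothdef} and then pass to the limit.

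Concretely, for each $\eps < \eps_0$ I would set
\[
 \EE_\eps(t) = \frac{1}{2}\|u_\eps(\cdot,t)\|_{H^{0,1}(\Hm^n)}^2 + \frac{1}{2}\|\partial_t u_\eps(\cdot,t)\|_{L^2(\Hm^n)}^2 + \frac{1}{p_c+1}\|u_\eps(\cdot,t)\|_{L^{p_c+1}(\Hm^n)}^{p_c+1}.
\]
Since $u_\eps$ is a genuine smooth solution of the \emph{perturbed} equation \eqref{app linear}, namely $\partial_t^2 u_\eps - (\Delta_{\Hm^n}+\rho^2) u_\eps = F_\eps$, the multiplier computation above is now fully rigorous and yields, instead of exact conservation, the identity
\[
 \frac{d}{dt}\EE_\eps(t) = \int_{\Hm^n} \partial_t u_\eps \left[F_\eps - F(u_\eps)\right] d\mu,
\]
the right-hand side being exactly the defect between the true forcing $F(u_\eps)=-|u_\eps|^{p_c-1}u_\eps$ and the regularised forcing $F_\eps$. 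Integrating over $[t',t]\subseteq[-T_1,T_2]$ and applying the Cauchy--Schwarz inequality first in space and then in time gives the clean bound
\[
 \bigl|\EE_\eps(t)-\EE_\eps(t')\bigr| \leq \|\partial_t u_\eps\|_{L^\infty L^2([-T_1,T_2]\times\Hm^n)}\,\|F_\eps - F(u_\eps)\|_{L^1 L^2([-T_1,T_2]\times\Hm^n)}.
\]

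The final step is to let $\eps\to 0$. By Lemma \ref{convergenceueps} the first factor on the right is bounded by $M_1$ uniformly in $\eps$, while the second factor tends to $0$; hence $\EE_\eps(t)-\EE_\eps(t')\to 0$. At the same time, the $H^1\times L^2$ and $L^{p_c+1}$ convergence of $(u_\eps,\partial_t u_\eps)$ to $(u,\partial_t u)$ at each fixed time (again Lemma \ref{convergenceueps}, together with the fact that $H^1$ convergence forces convergence of the $H^{0,1}$ norm) gives $\EE_\eps(t)\to\EE(t)$ and $\EE_\eps(t')\to\EE(t')$. Combining the two limits yields $\EE(t)=\EE(t')$ for all $t,t'$ in the arbitrary closed subinterval $[-T_1,T_2]$, and since such intervals exhaust $(-T_-,T_+)$ the energy is constant there. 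Finiteness of $\EE$ is immediate: $(u,\partial_t u)\in C(I;H^1\times L^2)$ by Lemma \ref{H1L2}, and the potential term is controlled through the Sobolev embedding $H^{0,1}(\Hm^n)\hookrightarrow L^{2n/(n-2)}(\Hm^n)=L^{p_c+1}(\Hm^n)$ of Proposition \ref{Sobolevem1}.

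The main obstacle, and the reason the smoothing scheme \eqref{smoothdef} was set up in advance, is precisely the regularity gap just described: at critical energy regularity neither the time differentiation of $\EE$, the spatial integration by parts, nor the pairing $\int_{\Hm^n} F(u)\,\partial_t u\,d\mu$ is justified directly. Smoothing in \emph{both} space (via $\mathbf{\tilde{P}}_\eps$) and time (via the $\phi$-average) is what produces honestly smooth solutions for which every step is legitimate; the price is that $u_\eps$ solves a perturbed equation, but the perturbation is quantitatively negligible in $L^1 L^2$ by Lemma \ref{convergenceueps}, so it disappears in the limit. The only subsidiary points needing care are checking that the time derivative of the potential energy equals $\int_{\Hm^n} |u_\eps|^{p_c-1}u_\eps\,\partial_t u_\eps\,d\mu$, which follows from H\"older's inequality using $u_\eps\in L^{2p_c}$ and $\partial_t u_\eps\in L^2$ for a.e.\ $t$, and that the spatial integrations by parts produce no boundary contribution at infinity, which holds because $u_\eps$ lies in every $L^2$-based Sobolev space.
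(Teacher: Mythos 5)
Your proposal is correct, and its core is the same as the paper's: smooth $u$ via \eqref{smoothdef}, use that $u_\eps$ solves the perturbed equation \eqref{app linear} to compute the drift of the regularized energy, bound that drift by $\|\partial_t u_\eps\|_{L^\infty L^2}\|F_\eps-F(u_\eps)\|_{L^1 L^2}$, and pass to the limit with Lemma \ref{convergenceueps}. The one structural difference is that the paper never differentiates the full-space energy: it inserts the spatial cut-off $\psi_\delta$ of Definition \ref{smoothcutoff} into the energy density, picks up the extra error term $-\int_{\Hm^n}(\Db^\alpha \psi_{\delta} \Db_\alpha u_\eps)(\partial_t u_\eps)\, d\mu$ (estimated by $\delta t_0 \|u_\eps\|_{L^\infty H^1}\|\partial_t u_\eps\|_{L^\infty L^2}$ plus a $\delta^{n-1}$ contribution near the origin), and takes the double limit $\delta \rightarrow 0$ followed by $\eps \rightarrow 0$. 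Your single-limit version is legitimate for exactly the reason you state: the multiplier $(\lambda^2+\rho^2+1)^{k/2}e^{-\eps^2\lambda^2}$ is bounded for every $k$, so $\mathbf{\tilde{P}}_\eps$ maps $L^2$ into every $H^k(\Hm^n)$; hence $u_\eps(\cdot,t)$ and $\partial_t u_\eps(\cdot,t)$ lie in $H^k$ for all $k$, and on the complete manifold $\Hm^n$ integration by parts between such functions produces no boundary contribution (approximate by compactly supported functions, or use cut-offs $\chi_R$ with $\|\nabla \chi_R\|_{L^\infty} \rightarrow 0$). So the cut-off buys nothing for energy conservation itself; the paper presumably retains $\psi_\delta$ because the same apparatus is genuinely needed in the Morawetz argument that follows, where the multiplier $a(r)=r$ is singular at the origin and unbounded at infinity. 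Your handling of the limit $\EE_\eps(t) \rightarrow \EE(t)$ (via $\|\cdot\|_{H^{0,1}} \leq \|\cdot\|_{H^1}$ and the $L^{p_c+1}$ convergence in Lemma \ref{convergenceueps}) and of finiteness (via Lemma \ref{H1L2} and Proposition \ref{Sobolevem1}) matches the paper's framework.
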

\begin{proof}
Without loss of generality, let us assume $t_0 \in (0,T_+)$. We can choose a time interval $[-T_1,T_2] \subset (-T_-, T_+)$ so that $t_0 <T_2$, smooth out the solution $u$ as in (\ref{smoothdef}) and define
\[
 \EE_{\eps, \delta}(t) = \int_{\Hm^n} \left(\frac{1}{2} |\nabla u_\eps (x,t)|^2 - \frac{\rho^2}{2} |u_\eps (x,t)|^2 + \frac{1}{2} |\partial_t u_\eps (x,t)|^2 + \frac{1}{p_c +1} |u_\eps (x,t)|^{p_c +1}\right)\psi_\delta d\mu(x).
\]
We differentiate in $t$ and obtain 
\[
 \EE_{\eps,\delta}'(t) =  \int_{\Hm^n} \left[F_\eps - F(u_\eps) \right](\partial_t u_\eps) \psi_\delta d\mu
 - \int_{\Hm^n} (\Db^\alpha \psi_{\delta} \Db_\alpha u_\eps) (\partial_t u_\eps) d\mu
\]
Here we need to use the fact that $u_\eps$ solves \eqref{app linear} and follow the same calculation we carried on in Section 4.2 of \cite{subhyper}.  A basic integration shows 
\begin{align*}
 \left|\EE_{\eps,\delta}(t_0) - \EE_{\eps,\delta}(0)\right| \leq & \int_0^{t_0}\! \int_{\Hm^n} \left|F_\eps - F(u_\eps) \right||\partial_t u_\eps| \psi_\delta d\mu dt
+\int_0^{t_0}\! \int_{\Hm^n} \left|(\Db^\alpha \psi_{\delta} \Db_\alpha u_\eps ) \partial_t u_\eps\right| d\mu dt\\
\lesssim & \|F_\eps - F(u_\eps)\|_{L^1 L^2} \|\partial_t u_\eps\|_{L^\infty L^2} + \delta t_0 \|u_\eps\|_{L^\infty ([0,t_0]; H^1(\Hm^n))} \|\partial_t u_\eps\|_{L^\infty L^2}\\
& + \delta^{n-1} t_0 \|\nabla u_\eps\|_{L^\infty (B(\mathbf{0},1)\times [0,t_0])} \|\partial_t u_\eps\|_{L^\infty (B(\mathbf{0},1)\times [0,t_0])}.
\end{align*}
If we send first $\delta \rightarrow 0$, then $\eps \rightarrow 0$ and apply Lemma \ref{convergenceueps}, we obtain the energy conservation law.
\end{proof}
\begin{remark} \label{invariance of energy focus}
 The energy of a solution to (CP1) in the focusing case is also a constant under the same assumptions, because the defocusing assumption has not been used in the argument above.
\end{remark}

\subsection{Proof for the Morawetz inequality}
Since the argument is similar to the one we carried on in Section 4 of \cite{subhyper}, we will give an outline of the proof only. Given an arbitrary time interval $[-T_1,T_2] \subset (-T_-,T_+)$, we can smooth out $u$, the non-linear term $F(u)$ as in (\ref{smoothdef}) and define
\[
 M_{\eps, \delta} (t) = -\int_{\Hm^n}  \partial_t u_\eps (x,t) \left(\Db^\alpha a \Db_\alpha u_\eps (x,t) + u_\eps(x,t) \cdot \frac{\Delta a}{2} \right)\psi_\delta d\mu(x)
\]
for $t\in [-T_1,T_2]$. Here we choose the function $a = r$ whose properties have been given in Lemma \ref{def of a} and the smooth cut-off function $\psi_{\delta}$ in Definition \ref{smoothcutoff}. We first combine a basic differentiation in $t$, integration by parts, the facts $\Db_\beta \psi_\delta \Db^\beta \Delta a \leq 0$ in $B(\mathbf{0},1)$ and $\mathbf{D}^2 a \geq 0$ to obtain
\begin{align}
 M_{\eps, \delta}'(t) \geq &\;\frac{1}{n} \int_{\Hm^n} |u_\eps|^{p_c +1}\Delta a \psi_\delta \,d\mu + \frac{1}{4} \int_{\Hm^n} (-\Delta \Delta a) |u_\eps|^2 \psi_\delta\,  d\mu  - e(\eps, \delta)\nonumber\\
 & \qquad - \int_{\Hm^n} \left[F_\eps - F(u_\eps)\right] \left(\Db^\alpha a \Db_\alpha u_\eps + u_\eps \cdot \frac{\Delta a}{2} \right)\psi_\delta d\mu. \label{derivative of M}
\end{align}
Throughout the proof we use the notation $e(\eps,\delta)$ to represent (possibly different) error terms satisfying
\begin{align*}
 |e(\eps,\delta)| \lesssim & \delta (M_1^2 + M_1^{p_c +1}) + \delta^{n-2} \|(u_\eps, \nabla u_\eps)\|_{C (\bar{B}(\mathbf{0},1)\times [-T_1,T_2])}^2 \\
 & \qquad + \delta^{n-1} \|u_\eps\|_{C (\bar{B}(\mathbf{0},1)\times [-T_1,T_2])}^{p_c+1} + \delta^{n-1} \|\partial_t u_\eps\|_{C (\bar{B}(\mathbf{0},1)\times [-T_1,T_2])}^2.
\end{align*}
Next we apply integration by parts again and make an estimate for an arbitrary $t \in [-T_1,T_2]$. 
\begin{align}
 & \left\|\Db^\alpha a \Db_\alpha u_\eps (\cdot, t) + u_\eps (\cdot, t)\cdot \frac{\Delta a}{2}\right\|_{L^2(\Hm^n;\psi_\delta d\mu)}^2\nonumber\\
  \leq & \int_{\Hm^n} \left(|\nabla u_\eps|^2 - \left(\rho^2 + \frac{\rho^2 -\rho}{\sinh^2 |x|}\right) |u_\eps|^2 \right)\psi_\delta d\mu + e(\eps,\delta)\nonumber\\
  \leq & \int_{\Hm^n} \left(|\nabla u_\eps|^2 - \rho^2 |u_\eps|^2 \right)\psi_\delta d\mu + e(\eps,\delta) \leq M_1^2 + e(\eps,\delta). \label{estimateb1}
\end{align}
As a result we have
\begin{align*}
  & - \int_{-T_1}^{T_2} \int_{\Hm^n} \left[F_\eps - F(u_\eps)\right] \left(\Db^\alpha a \Db_\alpha u_\eps + u_\eps \cdot \frac{\Delta a}{2} \right)\psi_\delta d\mu dt\\
  \geq & - \left\|F_\eps - F(u_\eps)\right\|_{L^1 L^2 ([-T_1,T_2]\times \Hm^n)} \left\|\Db^\alpha a \Db_\alpha u_\eps + u_\eps \cdot \frac{\Delta a}{2} \right\|_{L^\infty ([-T_1,T_2]; L^2(\Hm^n; \psi_\delta d\mu))}\\
  \geq & - (M_1^2 + e(\eps,\delta))^{1/2} \left\|F_\eps - F(u_\eps)\right\|_{L^1 L^2 ([-T_1,T_2]\times \Hm^n)} .
\end{align*}
We substitute the last integral in \eqref{derivative of M} by the lower bound above, integrate both sides and obtain an inequality 
\begin{align}
 M_{\eps, \delta} (T_2) -&  M_{\eps, \delta} (-T_1) \geq  \frac{1}{n} \int_{-T_1}^{T_2} \int_{\Hm^n} |u_\eps|^{p_c +1} \Delta a \psi_\delta d\mu dt + \frac{1}{4} \int_{-T_1}^{T_2} \int_{\Hm^n} (-\Delta \Delta a) |u_\eps|^2 \psi_\delta  d\mu dt \nonumber\\
 & -  (M_1^2 + e(\eps,\delta))^{1/2} \left\|F_\eps - F(u_\eps)\right\|_{L^1 L^2 ([-T_1,T_2]\times \Hm^n)} - [T_2+T_1] e(\eps, \delta). \label{inequality1}
\end{align}
On the other hand, we can find an upper bound of $M(t_0)$ for each $t_0 \in [-T_1,T_2]$ by (\ref{estimateb1}).
\begin{align*}
|M(t_0)| =& \left| \int_{\Hm^n} \partial_t u_\eps (x,t_0) \left( \Db^\alpha a \Db_\alpha u_\eps (x,t_0) + \frac{1}{2} u_\eps (x,t_0) \Delta a \right) \psi_\delta d\mu(x) \right|\\
 \leq &\frac{1}{2} \int_{\Hm^n} \left( |\partial_t u_\eps|^2 + \left(\Db^\alpha a \Db_\alpha u_\eps + \frac{1}{2}u_\eps \Delta a \right)^2 \right) \psi_\delta d\mu\\
 \leq &\frac{1}{2} \int_{\Hm^n} \left(|\partial_t u_\eps|^2 + |\nabla u_\eps|^2 - \rho^2 |u_\eps|^2 \right)\psi_\delta d\mu + e(\eps,\delta).
\end{align*}
We combine this with the inequality (\ref{inequality1}), substitute $\Delta a$, $\Delta \Delta a$ by their specific expressions by Lemma \ref{def of a} and then let $\delta \rightarrow 0$ to obtain 
\begin{align*}
 & \frac{1}{n} \int_{-T_1}^{T_2} \int_{\Hm^n} \frac{2\rho (\cosh |x|) |u_\eps|^{p_c +1}}{\sinh |x|} d\mu dt + \int_{-T_1}^{T_2} \int_{\Hm^n} \frac{\rho (\rho-1)(\cosh |x|)^3 |u_\eps|^2}{\sinh |x|} d\mu dt\\
  \leq & \EE_0(u_\eps(\cdot, -T_1),\partial_t u_\eps(\cdot, -T_1)) + \EE_0(u_\eps(\cdot, T_2), \partial_t u_\eps (\cdot, T_2))
  +  M_1 \left\|F_\eps - F(u_\eps)\right\|_{L^1 L^2 ([-T_1,T_2]\times \Hm^n)}.
\end{align*}
Here $\EE_0$ is the energy of the linear shifted wave equation on $\Hm^n$ defined as
\[
 \EE_0(v_0,v_1) = \frac{1}{2} \int_{\Hm^n} \left(|v_1|^2 + |\nabla v_0|^2 - \rho^2 |v_0|^2 \right) d\mu = \frac{1}{2}\|v_0\|_{H^{0,1}(\Hm^n)}^2 + \frac{1}{2} \|v_1\|_{L^2 (\Hm^n)}^2.
\]
Sending $\eps$ to zero gives
\begin{align*}
 & \frac{1}{n} \int_{-T_1}^{T_2} \int_{\Hm^n} \frac{2\rho (\cosh |x|) |u|^{p_c +1}}{\sinh |x|} d\mu dt + \int_{-T_1}^{T_2} \int_{\Hm^n} \frac{\rho (\rho-1)(\cosh |x|)^3 |u|^2}{\sinh |x|} d\mu dt\\
 &\qquad \leq \EE_0(u(-T_1),\partial_t u(-T_1)) + \EE_0(u(T_2), \partial_t u (T_2)) \leq 2 \EE(u_0,u_1).
\end{align*}
Since the argument above is valid for any time interval $[-T_1,T_2]$ satisfying $-T_- < -T_1 < 0 < T_2 <T_+$, we can finally finish the proof by letting $T_1 \rightarrow T_-$ and $T_2 \rightarrow T_+$.

\section{Scattering Results with Radial Initial Data}
\begin{theorem} \label{scattering1}
Assume $3 \leq n \leq 5$. Let $(u_0,u_1)$ be a pair of radial initial data in $H^{0,1}\times L^2 (\Hm^n)$. Then the solution $u$ to (CP1) in the defocusing case with initial data $(u_0,u_1)$ exists globally in time and scatters. More precisely, the solution $u$ satisfies
\begin{itemize}
  \item Its maximal lifespan $(-T_-(u_0,u_1), T_+(u_0,u_1)) = \Rm$.
  \item There exist two pairs $(u_0^\pm, u_1^\pm) \in H^{0,1}\times L^2 (\Hm^n)$ such that
       \[
          \lim_{t \rightarrow \pm \infty} \left\|\left(u(\cdot, t), \partial_t u(\cdot,t)\right)- \mathbf{S}_L (t)(u_0^\pm, u_1^\pm)\right\|_{{H^{0, 1}}\times{L^2}(\Hm^n)} = 0.
       \]
\end{itemize}
\end{theorem}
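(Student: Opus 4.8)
The plan is to reduce the scattering statement to a single global space–time bound and then extract that bound from the Morawetz inequality of Theorem \ref{Morawetz1A} with the help of the radial pointwise estimate of Lemma \ref{radialest}. By Proposition \ref{Yscattering}, once $\|u\|_{Y([0,T_+))}<\infty$ the solution is global forward in time and scatters as $t\to+\infty$ (and symmetrically for $t\to-\infty$); Proposition \ref{standard blow up} guarantees there is no finite-time breakdown hidden behind a finite $Y$-norm, and the energy conservation law of Section 5 keeps $\|u(\cdot,t)\|_{H^{0,1}}\lesssim\EE^{1/2}$ uniformly. Hence the entire theorem is equivalent to the a priori estimate $\|u\|_{Y(\Rm)}\le C(\EE)$, and this is what I would aim to prove.

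The only global-in-time information available is the weighted bound $\int\!\int \frac{\rho(\cosh|x|)}{\sinh|x|}|u|^{p_c+1}\,d\mu\,dt\le n\EE$ from Theorem \ref{Morawetz1A}, where $p_c+1=2n/(n-2)$. I would first record the behaviour of the weight $\coth|x|$: it is bounded below on $\{|x|\ge 1\}$, giving a clean $L^{p_c+1}_{t,x}$ bound there, and behaves like $1/|x|$ on $\{|x|<1\}$. I would then feed the energy bound into Lemma \ref{radialest} to get the uniform decay $|u(x,t)|\lesssim |x|^{1/2}(\sinh|x|)^{-\rho}\EE^{1/2}$, which is exponentially small for $|x|\gtrsim1$ and of the Euclidean type $|x|^{-(n-2)/2}\EE^{1/2}$ near the origin. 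A key observation guiding the whole strategy is that $u(\cdot,t)$ need not lie in $L^{2p_c}$ at a fixed time — indeed $2p_c>p_c+1$ exceeds the energy Sobolev exponent of Proposition \ref{Sobolevem1} — so the finiteness of $\|u\|_{Y}$ is a genuinely dispersive, time-averaged phenomenon and must be produced through the Strichartz estimate of Theorem \ref{StriH01b}, with the Morawetz bound supplying only smallness.

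The mechanism is therefore an interval decomposition. Since the total Morawetz integral is at most $n\EE<\infty$, I would split $\Rm$ into finitely many subintervals $I_1,\dots,I_N$ on each of which the weighted integral is below a threshold $\eta=\eta(\EE)$, with $N\lesssim n\EE/\eta+1$. On a single $I_j$ I would apply Theorem \ref{StriH01b} in the form $\|u\|_{Y(I_j)}\lesssim\EE^{1/2}+\|F(u)\|_{L^1L^2(I_j)}$, and then interpolate the cubic-type forcing $\|F(u)\|_{L^1L^2(I_j)}=\int_{I_j}\|u\|_{L^{2p_c}}^{p_c}\,dt$ between the scattering norm $Y(I_j)$ and the $\coth|x|$-weighted $L^{p_c+1}$ Morawetz quantity, using Lemma \ref{radialest} to reconcile the weight (the exponential decay makes the exterior factor bounded, while near the origin the pointwise bound is used to trade spatial integrability). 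The outcome I would target is a nonlinear estimate of the shape $\|F(u)\|_{L^1L^2(I_j)}\lesssim \eta^{\theta}\,\|u\|_{Y(I_j)}^{p_c}+\EE^{1/2}$ with some $\theta>0$; choosing $\eta$ small then closes a continuity/bootstrap argument (together with the small-data scattering result and the long-time perturbation theorem of Section 4) to give $\|u\|_{Y(I_j)}\le C(\EE)$ on each piece. Summing over the finitely many intervals yields $\|u\|_{Y(\Rm)}<\infty$, and the reduction of the first paragraph completes the proof.

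The hard part will be the interior region $\{|x|<1\}$ in this nonlinear estimate. Writing $|u|^{2p_c}=|u|^{p_c+1}|u|^{p_c-1}$ and using Lemma \ref{radialest} on the factor $|u|^{p_c-1}$ gives $|u|^{p_c-1}\lesssim|x|^{-2}$ near the origin, whereas the Morawetz weight only degenerates like $|x|^{-1}$; a static (pointwise-in-$t$) substitution therefore loses exactly one power of $|x|$ and produces a divergent integral, reflecting the fact noted above that $u(\cdot,t)\notin L^{2p_c}$ in general. Overcoming this is precisely where the smallness of the Morawetz mass on a \emph{short} interval must be combined with the full space–time Strichartz structure: a genuine concentration at the origin over $I_j$ would force the interior Morawetz integral above $\eta$, so keeping the $\|u\|_{Y(I_j)}$ factor in the interpolation — rather than bounding $|u|$ pointwise at full strength — is essential, and the sharpness of Lemma \ref{radialest} at the concentration scale is exactly what balances the exponents. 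I expect this interior interpolation, and the precise choice of auxiliary admissible pair from Theorem \ref{strichartzpre} used to carry it out, to be the main technical obstacle; the exterior region is comparatively harmless thanks to the exponential decay and the clean $L^{p_c+1}_{t,x}$ Morawetz bound.
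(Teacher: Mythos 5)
There is a genuine gap, and you have in fact located it yourself: the central nonlinear estimate $\|F(u)\|_{L^1L^2(I_j)}\lesssim \eta^\theta\|u\|_{Y(I_j)}^{p_c}+\EE^{1/2}$ is only a target in your write-up, and the route you sketch for the interior region $\{|x|<1\}$ — writing $|u|^{2p_c}=|u|^{p_c+1}|u|^{p_c-1}$ and inserting the pointwise bound at full strength $|u|^{p_c-1}\lesssim|x|^{-2}$ — loses a power of $|x|$ against the Morawetz weight $\coth|x|\sim|x|^{-1}$ and diverges, exactly as you note. Declaring this "the main technical obstacle" does not resolve it; the paper's proof is precisely the resolution, and it consists of a single algebraic calibration you are missing. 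Apply Lemma \ref{radialest} not at the power $p_c-1=\frac{4}{n-2}$ but at the power $\frac{1}{\rho-1/2}=\frac{2}{n-2}$: since $r^{1/2}(\sinh r)^{-\rho}\lesssim\left(\frac{\rho\cosh r}{\sinh r}\right)^{\rho-1/2}$, energy conservation gives
\[
|u(x,t)|^{\frac{2}{n-2}}\;\lesssim\;\frac{\rho\cosh|x|}{\sinh|x|}\,\EE^{\frac{1}{n-2}},
\]
i.e. the pointwise bound, raised to exactly this fractional power, reproduces the Morawetz weight itself — no more, no less. Multiplying this into the Morawetz integrand converts the \emph{weighted} bound of Theorem \ref{Morawetz1A} into the \emph{unweighted} global space-time bound
\[
u\in L^{\frac{2(n+1)}{n-2}}L^{\frac{2(n+1)}{n-2}}\bigl((-T_-,T_+)\times\Hm^n\bigr),\qquad \frac{2n}{n-2}+\frac{2}{n-2}=\frac{2(n+1)}{n-2},
\]
after which no weight and no interior/exterior decomposition ever appears. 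Your difficulty arose because you tried to interpolate against the weighted Morawetz quantity directly; once the weight has been traded away, the rest is pure space-time H\"older: with $(p,q)$ defined by \eqref{def of pq} (an admissible pair of Theorem \ref{StriH01b} for small $\kappa$), one has $\|F(u)\|_{L^1L^2}\leq\|u\|_{L^{2(n+1)/(n-2)}_{t,x}}^{\kappa}\|u\|_{L^pL^q}^{p_c-\kappa}$, and smallness comes from taking the tail $[a,T_+)$ of the convergent unweighted integral.

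Two further structural remarks. First, your fixed-time observation that $u(\cdot,t)\notin L^{2p_c}$ in general is correct (the embedding of Proposition \ref{Sobolevem1} stops at $q=p_c+1$), and it correctly tells you the argument must be space-time averaged; but the conclusion you draw — that the Morawetz bound can supply "only smallness" and that the estimate must be closed by keeping a $\|u\|_{Y(I_j)}^{p_c}$ factor against the weighted quantity — is the wrong repair. Second, your finite partition of the lifespan into intervals of small Morawetz mass is unnecessary (though harmless): since Definition \ref{localtheory} already gives finite $Y$-norm on compact subintervals, the paper needs only a single tail interval $[a,T_+)$ with small unweighted norm, plus the continuity argument in $b$, Proposition \ref{standard blow up} and Proposition \ref{Yscattering}; no appeal to small-data scattering or to the long-time perturbation theorem is required.
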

\begin{proof}
First of all, the Morawetz inequality (see Theorem \ref{Morawetz1A}) reads
\begin{equation} \label{Morawetz001}
 \int_{-T_-}^{T_+} \int_{\Hm^n} \frac{\rho (\cosh |x|) |u(x,t)|^{2n/(n-2)}}{\sinh |x|} d\mu(x) dt \leq n \EE(u_0,u_1) < \infty.
\end{equation}
According to Lemma \ref{radialest} and the energy conservation law we obtain a point-wise estimate 
\begin{align*}
 |u(x,t)| & \lesssim \frac{|x|^{1/2}}{(\sinh |x|)^\rho} \|u(\cdot,t)\|_{H^{0,1}(\Hm^n)} \lesssim \left(\frac{\rho \cosh |x|}{\sinh |x|}\right)^{\rho -1/2} \|u(\cdot, t)\|_{H^{0,1}(\Hm^n)}\\
 & \lesssim \left(\frac{\rho \cosh |x|}{\sinh |x|}\right)^{\rho -1/2} \left[\EE(u_0,u_1)\right]^{1/2}.
\end{align*}
As a result, the inequality (\ref{Morawetz001}) implies 
\[
 \int_{-T_-}^{T_+} \int_{\Hm^n} |u|^{\frac{2n}{n-2} + \frac{1}{\rho-1/2}} d\mu dt \lesssim \left[\EE(u_0,u_1)\right]^{1 + \frac{1/2}{\rho-1/2}}.
\]
A basic calculation shows $2n/(n-2) + 1/(\rho-1/2) = 2(n+1)/(n-2)$. Thus we have
\begin{equation} \label{LLbound}
 u \in L^{\frac{2(n+1)}{n-2}} L^{\frac{2(n+1)}{n-2}} ((-T_-,T_+)\times \Hm^n).
\end{equation}
For a small positive number $\kappa$, we define a pair $(p,q)$ by
\begin{equation} \label{def of pq}
 (p_c - \kappa) \left(\frac{1}{p}, \frac{1}{q}\right) + \kappa\left(\frac{n-2}{2(n+1)}, \frac{n-2}{2(n+1)} \right)= \left(1, \frac{1}{2}\right).
\end{equation}
It is clear that $(p,q) \rightarrow (p_c, 2 p_c)$ as $\kappa \rightarrow 0^+$. As a result, if we fix $\kappa$ to be a sufficiently small positive number, we have $1/p, 1/q \in (0,1/2)$. The definition (\ref{def of pq}) also guarantees that $1/p +n/q = n/2 -1$. We apply Strichartz estimates (Theorem \ref{StriH01b}), the energy conservation law, as well as an interpolation between $L^p L^q$ and $L^{\frac{2(n+1)}{n-2}} L^{\frac{2(n+1)}{n-2}}$ norms to obtain
\begin{align}
 \|u\|_{Y([a,b])} + &\|u\|_{L^p L^q ([a,b]\times \Hm^n)} \nonumber \\ 
 \leq & C \|(u(\cdot, a), \partial_t u(\cdot, a))\|_{H^{0,1} \times L^2(\Hm^n)} + C \|F(u)\|_{L^1 L^2 ([a,b]\times \Hm^n)} \nonumber \\
 \leq & C (2 \EE(u_0,u_1))^{1/2} + C \|u\|_{L^{\frac{2(n+1)}{n-2}} L^{\frac{2(n+1)}{n-2}}([a,b]\times \Hm^n)}^\kappa \|u\|_{L^p L^q ([a,b]\times \Hm^n)}^{p_c -\kappa}. \label{lin011}
\end{align}
Here $[a,b]$ may be any sub-interval of $(-T_-,T_+)$. Let us define $M = (2 \EE(u_0,u_1))^{1/2}$ and choose a small constant $\eta>0$ so that $2CM > CM + C\eta^\kappa (2CM)^{p_c -\kappa}$. According to the fact (\ref{LLbound}) we can fix a time $a \in (0,T_+)$ sufficiently close to $T_+$, so that the inequality
\[
 \|u\|_{L^{\frac{2(n+1)}{n-2}} L^{\frac{2(n+1)}{n-2}}([a,T_+)\times \Hm^n)} < \eta
\]
holds. Given any time $b \in [a,T_+)$, the inequality (\ref{lin011}) implies
\begin{align*}
  \|u\|_{Y([a,b])} + \|u\|_{L^p L^q ([a,b]\times \Hm^n)} \leq CM + C \eta^\kappa \left( \|u\|_{Y([a,b])} +  \|u\|_{L^p L^q ([a,b]\times \Hm^n)}\right)^{p_c -\kappa}.
\end{align*}
By a continuity argument in $b$ we obtain the following upper bound independent of $b \in [a,T_+)$. 
\[
  \|u\|_{Y([a,b])} + \|u\|_{L^p L^q ([a,b]\times \Hm^n)} < 2CM. 
\]
We make $b \rightarrow T_+$ and finally conclude that $\|u\|_{Y ([a,T_+))} \leq 2CM < \infty$. The global existence and scattering in the positive time direction immediately follows Proposition \ref{standard blow up} and Proposition \ref{Yscattering}. The other time direction can be handled in the same way since the shifted wave equation is time-reversible.
\end{proof}
\section*{Acknowledgements}
I would like to express my sincere gratitude to Professor Gigliola Staffilani for her helpful discussions and comments.

\end{document}